\newcommand{\ignore}[1]{}
\newtheorem{thm}{Theorem}[section]\theoremstyle{plain}
\newtheorem{theorem}[thm]{Theorem}\theoremstyle{plain}
\theoremstyle{plain}
\newtheorem{lemma}[thm]{Lemma}\theoremstyle{plain}
\theoremstyle{plain}
\theoremstyle{plain}
\newtheorem{claim}[thm]{Claim}\theoremstyle{plain}
\theoremstyle{plain}
\theoremstyle{plain}
\theoremstyle{plain}
\theoremstyle{plain}
\theoremstyle{plain}
\theoremstyle{plain}
\newtheorem{conjecture}[thm]{Conjecture}\theoremstyle{plain}
\theoremstyle{plain}
\DeclareMathOperator{\rank}{rank}
\DeclareMathOperator{\cl}{cl}
\newcommand{\val}{{\rm val}}
\newcommand{\cM}{{\mathcal M}}
\newcommand{\cN}{{\mathcal N}}
\newcommand{\cU}{{\mathcal U}}
\newcommand{\cC}{{\mathcal C}}
\newcommand{\cR}{{\mathcal R}}
\newcommand{\cS}{{\mathcal S}}
\newcommand{\cI}{{\mathcal I}}
\newcommand{\cH}{{\mathcal H}}
\newcommand{\cX}{{\mathcal X}}
\newcommand{\R}{{\mathbb R}}
\newcommand{\zed}{{\mathbb Z}}
\newcommand{\sm}{{\setminus}}
\tikzstyle{mybox} = [draw=red, very thick,
\tikzstyle{fancytitle} =[fill=red, text=white]
\tikzset{
    invisible/.style={opacity=0,text opacity=0},
    visible on/.style={alt={#1{}{invisible}}},
    alt/.code args={<#1>#2#3}{
      \alt<#1>{\pgfkeysalso{#2}}{\pgfkeysalso{#3}}
    },
  }
\definecolor{ForestGreen}{rgb}{0.13, 0.55, 0.13}
\definecolor{FluorescentOrange}{rgb}{1.0, 0.75, 0.0}
\tikzset{%
  highlight/.style={rectangle,rounded corners,fill=green!15,draw,fill opacity=0.5,thick,inner sep=0pt}
}
\begin{document}
\title{Maximal Matroids in Weak Order Posets}
\author{ 
Bill Jackson\thanks{School
of Mathematical Sciences, Queen Mary University of London,
Mile End Road, London E1 4NS, England. email:
{\tt B.Jackson@qmul.ac.uk}}  
 and Shin-ichi Tanigawa\thanks{Department of Mathematical Informatics, Graduate School of Information Science and Technology, University of Tokyo, 7-3-1 Hongo, Bunkyo-ku, 113-8656,  Tokyo Japan. email: {\tt tanigawa@mist.i.u-tokyo.ac.jp}}}

\date{\today}
\maketitle

\begin{abstract}
Let $\cX$ be a family of subsets of a finite set $E$. 
A matroid on $E$ is called an $\cX$-matroid if each set in $\cX$ is a circuit.
We consider the problem of determining when there exists a unique maximal $\cX$-matroid 
in the weak order poset of all $\cX$-matroids on $E$,
and characterizing its rank function when it exists.

\medskip

%
\end{abstract}
\section{Introduction}
\subsection{Unique maximality problem and submodularity conjecture}
Let 
$\cX$ be a family of subsets of a finite set $E$. 
We will refer to any matroid on $E$ in which each set in $\cX$ is a circuit as an 
{\em $\cX$-matroid on $E$}. 
The set of all $\cX$-matroids on $E$ forms a poset under the {\em weak order of matroids} in which, for two matroids $\cM_1$ and $\cM_2$ with the same groundset, we have $\cM_1\preceq \cM_2$ if every independent set in $\cM_1$ is independent in $\cM_2$. 
The main problem addressed in this paper is to determine when this poset has a unique maximal element and to characterise this unique maximal matroid when it exists.

Our key tool  is the following upper bound on the rank function of any  $\cX$-matroid on $E$ from~\cite{CJT2}. 
A {\em proper $\cX$-sequence} is a sequence $\cS=(X_1, X_2, \dots, X_k)$ of sets in $\cX$ such that $X_i\not\subset \bigcup_{j=1}^{i-1} X_j$ for all $i=2,\dots, k$. For $F\subset E$, let ${\rm val}(F,\cS)=| F\cup (\bigcup_{i=1}^k X_i)|-k$.

\begin{lemma}[{\cite[Lemma 3.3]{CJT2}}]\label{lem:upper1}
Suppose $\cM$ is an $\cX$-matroid on $E$ and $F\subseteq E$. Then $r_{\cM}(F)\leq {\rm val}(F,\cS)$ for any proper $\cX$-sequence $\cS$.  Furthermore, if equality holds, then $r_\cM(F-e)=r_\cM(F)-1$ for all 
$e\in F\sm  (\bigcup_{X\in \cS} X)$ and $r_\cM(F+e)=r_\cM(F)$ for all 
$e\in \bigcup_{X\in \cS} X$. 
\end{lemma}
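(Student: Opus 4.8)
The plan is to prove the inequality $r_\cM(F)\le \val(F,\cS)$ by exhibiting an explicit spanning set of $F\cup U$ of size $|F\cup U|-k$, where $U=\bigcup_{i=1}^{k}X_i$; the ``furthermore'' clause will then fall out by feeding modified sets back into the inequality. To set up the spanning set, I would first choose representatives: pick any $e_1\in X_1$, and for each $i\ge 2$ use the proper-sequence condition to pick $e_i\in X_i\setminus\bigcup_{j<i}X_j$. A short check shows that $e_1,\dots,e_k$ are distinct and all lie in $U$, since for $i<j$ we have $e_i\in X_i\subseteq\bigcup_{\ell<j}X_\ell$ whereas $e_j\notin\bigcup_{\ell<j}X_\ell$. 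Then I set $S=(F\cup U)\setminus\{e_1,\dots,e_k\}$, so that $|S|=|F\cup U|-k=\val(F,\cS)$.

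The core step is to show $\cl(S)=F\cup U$ in $\cM$, i.e.\ that every $e_i$ lies in $\cl(S)$, which I would establish by induction on $i$. Since $X_i$ is a circuit of $\cM$, the set $X_i-e_i$ is independent and spans $X_i$, so $e_i\in\cl(X_i-e_i)$. The key observation is that, by the choice of representatives, $X_i$ contains no $e_j$ with $j>i$ (for such $j$ we have $X_i\subseteq\bigcup_{\ell<j}X_\ell$, while $e_j$ avoids that union), and hence $X_i-e_i\subseteq S\cup\{e_1,\dots,e_{i-1}\}$. By the inductive hypothesis $e_1,\dots,e_{i-1}\in\cl(S)$, so $\cl(S\cup\{e_1,\dots,e_{i-1}\})=\cl(S)$ and therefore $e_i\in\cl(X_i-e_i)\subseteq\cl(S)$. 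Once all $e_i\in\cl(S)$ we obtain $\cl(S)\supseteq F\cup U$, whence $r_\cM(F)\le r_\cM(F\cup U)=r_\cM(S)\le |S|=\val(F,\cS)$. This ordering-sensitive containment $X_i-e_i\subseteq S\cup\{e_1,\dots,e_{i-1}\}$ is the main obstacle and is exactly where the proper-sequence hypothesis is used: the naive alternative of adding the circuits one at a time while tracking the rank deficiency breaks down precisely when some $X_i$ is already contained in $F\cup\bigcup_{j<i}X_j$, and the closure bookkeeping above is what sidesteps that difficulty.

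For the furthermore clause, assume equality $r_\cM(F)=\val(F,\cS)=|F\cup U|-k$. For $e\in U$: tracing equality back through the chain $r_\cM(F)\le r_\cM(F\cup U)=r_\cM(S)\le|S|$ forces $r_\cM(F)=r_\cM(F\cup U)$, so $U\subseteq\cl(F)$ and hence $r_\cM(F+e)=r_\cM(F)$. For $e\in F\setminus U$: I would apply the already-proved inequality to the set $F-e$ with the same sequence $\cS$. Since $e\notin U$ we have $(F-e)\cup U=(F\cup U)-e$, so $\val(F-e,\cS)=|F\cup U|-1-k=r_\cM(F)-1$, giving $r_\cM(F-e)\le r_\cM(F)-1$; as deleting one element drops the rank by at most one, the reverse inequality $r_\cM(F-e)\ge r_\cM(F)-1$ holds automatically, and equality follows. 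I expect this clause to be routine once the inequality and the identity $\cl(S)=F\cup U$ are in hand.
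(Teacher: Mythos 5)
Your proof is correct. The paper itself does not reproduce a proof of this lemma (it is imported from \cite[Lemma 3.3]{CJT2}), but your argument --- choosing distinct representatives $e_i\in X_i\setminus\bigcup_{j<i}X_j$, showing that $S=(F\cup U)\setminus\{e_1,\dots,e_k\}$ spans $F\cup U$ by induction along the sequence, and then deriving the ``furthermore'' clause by tracing equality and by applying the inequality to $F-e$ --- is the standard argument for this statement and is complete; in particular you correctly handle the delicate case where $X_i\subseteq F\cup\bigcup_{j<i}X_j$ even though $X_i\not\subseteq\bigcup_{j<i}X_j$, which is exactly where a naive element-counting induction would break.
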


We can use this lemma to derive a sufficient condition for the poset of all $\cX$-matroids on $E$ to have a unique maximal element. We need to consider a slightly larger poset. We say that a matroid $\cM$ on $E$ is {\em $\cX$-cyclic} if each $X\in \cX$ is a {\em cyclic set} in $\cM$ i.e. for every $e\in X$, there is a circuit $C$ of $\cM$ with $e\in C\subseteq X$.


\begin{lemma}\label{lem:upper2}
Let $\cX$ be a family of subsets of a finite set $E$ and define
${\rm val}_{\cX}:2^E\to \zed$ by
\begin{equation}\label{eq:f}
{\rm val}_{\cX}(F)=\mbox{$\min \{{\rm val}(F,\cS): \text{$\cS$ is a proper $\cX$-sequence}\}$} \qquad (F\subseteq E). 
\end{equation}
Suppose ${\rm val}_{\cX}$ is a submodular set function on $E$. Then ${\rm val}_{\cX}$ is the rank function of 
an $\cX$-cyclic 
matroid $\cM_{\cX}$ on $E$. 
In addition, if the poset of all $\cX$-matroids on $E$ is nonempty, then $\cM_{\cX}$ is the unique maximal  $\cX$-matroid on $E$.
\end{lemma}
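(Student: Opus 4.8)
The plan is to verify that $\val_\cX$ obeys the standard axioms of a matroid rank function, and then to analyse the sets of $\cX$ inside $\cM_\cX$ to obtain both the $\cX$-cyclic conclusion and, under the nonemptiness hypothesis, unique maximality.

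\textbf{Establishing the rank function.} Submodularity is given, so I would only check non-negativity, the cardinality bound, and monotonicity, and then invoke the standard fact that an integer-valued function that is non-negative, bounded above by $|F|$, monotone, and submodular is a matroid rank function. The empty sequence is a proper $\cX$-sequence (the defining condition is vacuous) with value $|F|$, giving $\val_\cX(F)\le|F|$; properness makes the partial unions strictly increasing in $i$, so $|\bigcup_{i=1}^kX_i|\ge k$ and hence every $\val(F,\cS)\ge0$. For monotonicity I would take a sequence $\cS$ optimal for the larger set $G\supseteq F$ and use $\val(F,\cS)\le\val(G,\cS)$. This produces the matroid $\cM_\cX$ with $r_{\cM_\cX}=\val_\cX$.

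\textbf{The $\cX$-cyclic property.} Here the key claim is $\val_\cX(X-e)=\val_\cX(X)$ for every $X\in\cX$ and $e\in X$, since this forces $e\in\cl_{\cM_\cX}(X-e)$ and hence a circuit $C$ with $e\in C\subseteq X$. Monotonicity gives $\val_\cX(X-e)\le\val_\cX(X)$. For the reverse I would take $\cS=(X_1,\dots,X_k)$ optimal for $X-e$: if $e\in\bigcup_iX_i$ then $(X-e)\cup\bigcup_iX_i=X\cup\bigcup_iX_i$, so $\val(X-e,\cS)=\val(X,\cS)\ge\val_\cX(X)$; otherwise appending $X$ yields a proper $\cX$-sequence (proper because $e\in X\sm\bigcup_iX_i$) whose value at $X$ equals $\val(X-e,\cS)$, again giving $\val_\cX(X)\le\val_\cX(X-e)$.

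\textbf{Unique maximality.} I would first recall that the weak order is governed by rank, namely $\cM_1\preceq\cM_2$ iff $r_{\cM_1}\le r_{\cM_2}$ pointwise. Then Lemma~\ref{lem:upper1} at once gives $r_{\cM'}(F)\le\val_\cX(F)=r_{\cM_\cX}(F)$ for every $\cX$-matroid $\cM'$ and every $F$, so $\cM'\preceq\cM_\cX$. The remaining, and crucial, point is that $\cM_\cX$ is itself an $\cX$-matroid, which is where I would use nonemptiness: fixing an $\cX$-matroid $\cM$, the sequence $(X)$ gives $\val_\cX(X)\le|X|-1$, while $X$ being a circuit of $\cM$ gives $|X|-1=r_\cM(X)\le\val_\cX(X)$, and each $X-e$ being independent in $\cM$ gives $|X|-1=r_\cM(X-e)\le\val_\cX(X-e)\le|X-e|$. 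Hence $\val_\cX(X)=|X|-1=\val_\cX(X-e)$ for all $e\in X$, so $X$ is dependent with all proper subsets independent, i.e. a circuit of $\cM_\cX$. Thus $\cM_\cX$ lies in the poset and dominates every member, so it is the maximum and therefore the unique maximal element.

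\textbf{Expected obstacle.} The delicate step is upgrading \emph{cyclic} to \emph{circuit}. The cyclicity computation only shows each $X\in\cX$ is a cyclic set, and in general $\val_\cX(X)$ can be strictly below $|X|-1$ (when overlapping sets in $\cX$ admit no $\cX$-matroid, $X$ need not be a circuit of $\cM_\cX$). The nonemptiness hypothesis, channelled through the sharp bound of Lemma~\ref{lem:upper1}, is exactly what pins $r_{\cM_\cX}(X)$ at $|X|-1$; everything else is the sequence-augmentation bookkeeping used for the cyclic property.
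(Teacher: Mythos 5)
Your proof is correct and follows essentially the same route as the paper: verify the rank axioms for $\val_{\cX}$, establish $\val_{\cX}(X-e)=\val_{\cX}(X)$ by the same case split on whether $e$ lies in $\bigcup_{X_i\in\cS}X_i$ (appending $X$ to the sequence otherwise), and then use Lemma~\ref{lem:upper1} for maximality. Your final step pins down $r_{\cM_{\cX}}(X)=|X|-1$ by a direct computation against an existing $\cX$-matroid, where the paper instead combines $\cM\preceq\cM_{\cX}$ with the already-proved $\cX$-cyclicity; both correctly yield that each $X\in\cX$ is a circuit of $\cM_{\cX}$.
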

\begin{proof}
It is straightforward to check that ${\rm val}_{\cX}$ is non-decreasing and satisfies ${\rm val}_{\cX}(e)\leq 1$ for all $e\in E$. Since ${\rm val}_{\cX}$ is also submodular, this implies that ${\rm val}_{\cX}$ is the rank function of a matroid $\cM_{\cX}$. To see that $\cM_{\cX}$ is $\cX$-cyclic, we choose $e\in X\in \cX$ and let $\cS$ be a proper $\cX$-sequence such that ${\rm val}_{\cX}(X-e)={\rm val}(X-e,\cS)$. If $e\in \bigcup_{X_i\in \cS}X_i$ then we have ${\rm val}_{\cX}(X)\leq {\rm val}(X,\cS)={\rm val}(X-e,\cS)={\rm val}_{\cX}(X-e)$. On the other hand, if $e\not\in \bigcup_{X_i\in \cS}X_i$ then
we can extend $\cS$ to a longer proper $\cX$-sequence $\cS'$ by adding $X$ as the last element of $\cS'$ and we will have   
${\rm val}_{\cX}(X)\leq {\rm val}(X,\cS')={\rm val}(X-e,\cS)={\rm val}_{\cX}(X-e)$. In both cases  equality must hold throughout since ${\rm val}_{\cX}$ is non-decreasing. Since ${\rm val}_{\cX}$ is the rank function of $\cM_{\cX}$, the equality ${\rm val}_{\cX}(X)={\rm val}_{\cX}(X-e)$ implies that $e$ belongs to a circuit of $\cM_{\cX}$ which is contained in $X$. Hence $\cM_{\cX}$ is $\cX$-cyclic.

Lemma \ref{lem:upper1} implies that $\cM\preceq \cM_{\cX}$ for every $\cX$-matroid $\cM$ on $E$.  If there exists at least one $\cX$-matroid on $E$, then this implies that each $X\in \cX$ is a circuit in $\cM_{\cX}$ and that $\cM_{\cX}$ is the unique maximal  $\cX$-matroid on $E$.
 \end{proof}

We conjecture that the converse to Lemma \ref{lem:upper2} is also  true.
The special case when $\cX$ is the set of all non-spanning circuits of a matroid on $E$ was previously given in \cite{CJT2}. 
\begin{conjecture}\label{con:unique}
Let $\cX$ be a family of subsets of a finite set $E$.
Suppose there is at least one $\cX$-matroid on $E$.
Then the poset of all  $\cX$-matroids on $E$ has a unique maximal element if and only if ${\rm val}_{\cX}$ is a submodular set function on $E$.~\footnote{More generally, if we remove the hypothesis that there is at least one $\cX$-matroid on $E$, then we conjecture that the poset of all  $\cX$-cyclic matroids on $E$  has a unique maximal element if and only if ${\rm val}_{\cX}$ is a submodular set function on $E$.
}
\end{conjecture}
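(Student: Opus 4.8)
The forward implication ``${\rm val}_\cX$ submodular $\Rightarrow$ unique maximal element'' is precisely Lemma~\ref{lem:upper2}, so the content of the conjecture is the converse, and this is what I would try to prove: assuming the poset of $\cX$-matroids on $E$ is nonempty and has a unique maximal element $\cM^*$, show that ${\rm val}_\cX$ is submodular. First I would upgrade ``unique maximal'' to ``greatest''. Since $E$ is finite there are only finitely many matroids on $E$, so the poset is finite, and a finite poset with a unique maximal element has that element as a greatest element; hence $\cM\preceq \cM^*$ for every $\cX$-matroid $\cM$. By the definition of the weak order, $\cM\preceq \cM^*$ forces $r_{\cM}(F)\le r_{\cM^*}(F)$ for all $F\subseteq E$ (a maximal independent subset of $F$ in $\cM$ is independent in $\cM^*$), so
\[
r_{\cM^*}(F)=\max\{r_{\cM}(F): \cM \text{ is an } \cX\text{-matroid on } E\}\qquad (F\subseteq E).
\]

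By Lemma~\ref{lem:upper1} we already have $r_{\cM^*}(F)\le {\rm val}_\cX(F)$ for every $F$. The plan is to prove the reverse inequality: if $r_{\cM^*}(F)\ge {\rm val}_\cX(F)$ for all $F$, then $r_{\cM^*}={\rm val}_\cX$, and since the rank function of a matroid is submodular we are done. By the displayed identity the reverse inequality is equivalent to the following \emph{achievability} statement, which becomes the real target: for every $F\subseteq E$ there is an $\cX$-matroid $\cM_F$ on $E$ with $r_{\cM_F}(F)={\rm val}_\cX(F)$.

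To produce such an $\cM_F$, fix $F$, choose a proper $\cX$-sequence $\cS=(X_1,\dots,X_k)$ attaining the minimum in~\eqref{eq:f}, and set $U=\bigcup_i X_i$, so that ${\rm val}_\cX(F)=|F\cup U|-k$. The equality case of Lemma~\ref{lem:upper1} tells me exactly what $\cM_F$ should look like: I want $U\subseteq \cl_{\cM_F}(F)$ together with $r_{\cM_F}(F\cup U)=|F\cup U|-k$, the point being that properness of $\cS$ is what makes the $k$ circuit-constraints carried by $X_1,\dots,X_k$ independent, so that they lower the rank of $F\cup U$ by exactly $k$ and not more. Concretely I would start from a known $\cX$-matroid $\cM_0$ (one exists because the poset is nonempty) and try to freely increase rank away from $F\cup U$ while keeping every member of $\cX$ minimally dependent, e.g.\ through a generic representation in which the only imposed dependencies are those dictated by the sets in $\cX$.

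The whole difficulty is concentrated in this last construction. Establishing achievability for all $F$ at once is the same as showing that the pointwise maximum $\rho(F)=\max_{\cM} r_{\cM}(F)$ over all $\cX$-matroids equals ${\rm val}_\cX(F)$, and it is exactly the potential failure of this that I expect to be the main obstacle: requiring every $X\in\cX$ to be a circuit is a non-generic, globally coupled condition as soon as the sets in $\cX$ overlap, so there is no automatic reason a single matroid can be made tight at a prescribed $F$. I would therefore first seek structural hypotheses on $\cX$ (as in the non-spanning-circuit case treated in~\cite{CJT2}) under which the freest $\cX$-matroid can be built explicitly and shown to attain ${\rm val}_\cX(F)$ everywhere. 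A contrapositive route --- converting a submodularity violation ${\rm val}_\cX(A)+{\rm val}_\cX(B)<{\rm val}_\cX(A\cup B)+{\rm val}_\cX(A\cap B)$ into two incomparable maximal $\cX$-matroids, one tight on $A$ and one tight on $B$ --- looks attractive but ultimately reduces to the same explicit construction.
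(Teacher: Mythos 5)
The statement you are proving is labelled a \emph{conjecture} in the paper: the authors prove only the ``if'' direction (that is Lemma~\ref{lem:upper2}, as you correctly note) and then verify the converse for particular families $\cX$; there is no general proof in the paper for you to be measured against. Your setup for the converse is sound as far as it goes: in a finite poset a unique maximal element is a greatest element, so $\cM\preceq\cM^*$ for every $\cX$-matroid $\cM$, hence $r_{\cM^*}(F)=\max_{\cM}r_{\cM}(F)$, and by Lemma~\ref{lem:upper1} it remains to show $r_{\cM^*}(F)\geq{\rm val}_{\cX}(F)$ for every $F$.

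The gap is that this remaining step is not a reduction but a restatement. Under the unique-maximality hypothesis, ``for every $F$ there is an $\cX$-matroid $\cM_F$ with $r_{\cM_F}(F)={\rm val}_{\cX}(F)$'' is exactly equivalent to $r_{\cM^*}={\rm val}_{\cX}$, which is the conclusion you are after; you have moved the entire difficulty into the ``achievability'' construction and then acknowledge you cannot carry it out. The proposed construction itself (``a generic representation in which the only imposed dependencies are those dictated by the sets in $\cX$'') does not work in general: $\cX$-matroids need not be representable, and forcing each $X\in\cX$ to be a circuit is a globally coupled, non-generic condition --- indeed, whether a ``freest'' such matroid exists with the right rank function is precisely what the conjecture asks. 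Note also that achievability can genuinely fail in the non-unique case: for $C_5$-matroids on $K_n$ (Theorem~\ref{thm:Ck}) no single $\cX$-matroid attains ${\rm val}_{\cX}$ everywhere, so any argument must really use the unique-maximality hypothesis, which your sketch does not. The paper's partial progress on this point is Lemma~\ref{lem:conflat}, which establishes achievability only on the connected flats of a \emph{given} candidate matroid and only in cases where those flats can be described explicitly (via weakly saturated sequences or count-matroid structure); the general converse remains open.
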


\noindent
We will verify this conjecture
for various families $\cX$ and provide some tools to facilitate  further progress on the conjecture.

Conjecture~\ref{con:unique} is motivated by the polynomial identity testing problem of symbolic determinants (or the Edmonds problem). In this problem, we are given a matrix $A$ with entries in $\mathbb{Q}[x_1,\dots, x_n]$, and we are asked to decide whether the rank of $A$ over $\mathbb{Q}(x_1,\dots, x_n)$ is at least a given number. The Schwarz-Zippel Lemma implies that the problem is in the class NP, but it is a long-standing open problem to show that it is also in co-NP. 
The following experimental approach may aid our understanding of this problem. We  first test the linear independence/dependence of small sets of rows of $A$ 
to obtain a family $\cX$ of minimally dependent sets of rows.
Then Lemma \ref{lem:upper1} tells us that we can use any $\cX$-sequence to obtain a certificate that the rank of $A$ is at most a specified value.
In addition, if the "freest" matroid on the 
groundset $E$ indexed by the rows of $A$ in which each set in  $\cX$ is a circuit is 
uniquely determined, then Conjecture~\ref{con:unique} would imply that
its rank is ${\rm val}_{\cX}$ and this function has the potential to be the rank function of the row matroid of $A$. 

\subsection{Unique maximality problem on graphs}
We will concentrate on the special case of Conjecture \ref{con:unique} when $E$ is the edge set of a graph $G$ and $\cX$ is the family $\cH_G$ of  edge sets of all subgraphs of $G$ which are isomorphic to some member of a given family $\cH$ of graphs.
To simplify terminology we say that a matroid $\cM$ is a {\em $\cH$-matroid on $G$} if 
it is an $\cH_G$-matroid on $E(G)$. 
We will assume throughout that $G$ contains at least one copy of each $H\in \cH$ otherwise we can just consider $\cH\sm \{H\}$. 
This implies that the edge sets of any two subgraphs of $G$ which are  isomorphic to the same subgraph of a graph $H\in \cH$ will have the same rank in $\cM$, but we do not require $\cM$ to be  completely {\em symmetric} i.e.   the edge sets of every pair of isomorphic subgraphs of $G$  have the same rank.

We will simplify notation in the case 
when $\cH=\{H\}$ and refer to a $\cH$-matroid on $G$ as a {\em $H$-matroid on $G$}.
Two examples of $K_3$-matroids on $K_n$ are the graphic matroid of $K_n$ and the rank two uniform matroid on $E(K_n)$.

Chen, Sitharam and Vince previously considered the unique maximality problem for $H$-matroids on $K_n$ for various graphs $H$. They announced at a workshop at BIRS in 2015, see \cite{Stalk}, that there is a unique maximal $K_5$-matroid on $K_n$. Sitharam and Vince subsequently released a preprint \cite{SV} which claims to show that  there is a unique maximal $H$-matroid on $K_n$ for {\em all} graphs $H$. Unfortunately their claim is false. Pap~\cite{Pap} pointed out that the poset of $C_5$-matroids on $K_n$ has two maximal elements. We will describe Pap's counterexample, and give other counterexamples to the Sitharam-Vince claim in Section \ref{sec:notunique}.

Our interest in this topic was motivated by the work of 
Graver, Servatius, and Servatius~\cite{G,GSS} and Whiteley~\cite{Wsurvey} on maximal abstract rigidity matroids, and that of Chan, Sitharam and Vince~\cite{Stalk,SV} on maximal $H$-matroids. 
In two joint papers with Clinch \cite{CJT1,CJT2}, we were able to   confirm 
that there is a unique maximal $K_5$-matroid on $K_n$ and, more importantly, give a good characterisation for the rank function of this matroid. The theory of matroid erections due to Crapo \cite {C} is a key ingredient in our proof technique.

In this paper we will use results on matroid erection from \cite{CJT2} to construct a 
maximal element in the poset of all $\cX$-matroids on a set  $E$. 
  We will show that this element is the unique maximal element in the poset of all $\cH$-matroids on a graph $G$ for various pairs $(\cH,G)$, and verify that Conjecture~\ref{con:unique} holds in each case.

\subsection{Weakly saturated sequences}
The function ${\rm val}_{\cX}$ defined in (\ref{eq:f}) is related to the {\em weak saturation number} in extremal graph theory.
Let  $\cX$ be a family of subsets of a finite set $E$, and $F_0\subseteq E$.
A proper $\cX$-sequence $(X_1, X_2,  \dots, X_m)$ is said to be 
a {\em weakly $\cX$-saturated sequence from $F_0$} if 
$|X_i\setminus (F_0\cup \bigcup_{j<i} X_j)|=1$ for all $i$ with $1\leq i\leq m$. We say that $E$ {\em can be constructed  by a weakly $\cX$-saturated sequence from $F_0$} if there is a weakly $\cX$-saturated sequence $\cS$ from $F_0$ with $E=F_0\cup \bigcup_{X\in \cX} X$.
These sequences were first introduced by Bollob\'as \cite{B}, where 
he posed the problem of determining the size of a smallest set $F_0$ from which  $E$ can be constructed by a weakly $\cX$-saturated sequence. 
The problem has subsequently been studied by several authors, typically in the case when $E$ is the edge set of a complete $k$-uniform hypergraph or a complete bipartite graph, see for example \cite{A,B,Khyp,Ksym,MS,P1,P2}.
We will see in Sections 3 and 4 that results on weakly $\cX$-saturated sequences can sometimes be used to prove the unique maximality of an $\cX$-matroid. However this approach is applicable only when the flats of the target matroid are easily described. (The difficulty of deciding uniqueness when the structure of the flats is more complicated is illustrated by the matroids discussed in Section~6.)

The concept of $\cX$-matroids was previously studied by Kalai~\cite{Khyp} and  Pikhurko~\cite{P2} with the  goal of constructing a maximum rank $\cX$-matroid on $E$ to obtain 
a lower bound on the size of a  set $F_0$ from which  $E$ can be constructed by a weakly $\cX$-saturated sequence.
Our concern in this paper is different: we would like to gain a better understanding of the poset of all $\cX$-matroids on a given finite set $E$ by determining its maximal elements.
%

\medskip

We close this section by listing notation used throughout the paper.
Let $\cM$ be a matroid on a finite set.
Its rank function and closure operator are denoted by $r_{\cM}$ and ${\rm cl}_{\cM}$, respectively.
A set $F\subseteq E$ with ${\rm cl}_{\cM}(F)=F$ is called a {\em flat}.

For a graph $G$, $V(G)$ and $E(G)$ denote its vertex set and its edge set, respectively. Let $N_G(v)$ be the set of neighbors of $v$ in $G$. For $F\subseteq E(G)$, let $V(F)$ be the set of vertices incident to $F$ and let $G[F]$ be the graph with vertex set $V(F)$ and edge set $F$. Let $d_F(v)$ be the number of edges in $F$ incident to a vertex $v\in V(G)$, and let $N_F(v)$ be the set of neighbors of $v$ in $G[F]$.

For disjoint sets $X$ and $Y$, let $K(X)$ be the complete graph with vertex set $X$
and $K(X;Y)$ be the complete bipartite graph with vertex partition $(X,Y)$.

\section{Maximal Matroids and Matroid Elevations}
 
Let $\cX$ be a family of  subsets of a finite set $E$.  
We first derive a sufficient condition for a given $\cX$-matroid on $E$ to be the unique maximal such matroid.
We then use results from \cite{CJT2} to construct a maximal element in the poset of all $\cX$-matroids on $E$ (whenever this poset is non-empty).
%

\subsection{A sufficient condition for unique maximality}

Recall that a set $F$ in a matroid $\cM$ is {\em connected} if, for every pair of elements $e_1,e_2\in F$, there exists a circuit $C$ of $\cM$ with $e_1,e_2\in C\subseteq F$, and that $F$ is a {\em connected component} of $\cM$ if $F$ is either a coloop of $\cM$ or a maximal connected set in $\cM$. It is well known that the set $\{F_1,F_2,\ldots, F_m\}$ of all connected components partitions the ground set of $\cM$ and that $\rank \cM=\sum_{i=1}^mr_\cM(F_i)$. In addition, $F$ is connected in $\cM$ if and only if $r_\cM(F)<r_\cM(F')+r_\cM(F'')$ for all partitions $\{F',F''\}$ of $F$.


\begin{lemma}\label{lem:conflat}
Let  
$\cX$ be a family of subsets of a finite set $E$ and 
$\cM$ be a loopless $\cX$-matroid on $E$.
Suppose that,
for every connected flat $F$ of $\cM$, there is a proper $\cX$-sequence  $\cS$
with  $r_\cM(F)={\rm val}(F,\cS)$.
Then ${\rm val}_{\cX}=r_{\cM}$ 
and 
$\cM$ is the unique maximal $\cX$-matroid on $E$. 
\end{lemma}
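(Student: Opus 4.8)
The plan is to prove the stronger statement $\val_{\cX}=r_{\cM}$ as functions on $2^E$; the unique maximality of $\cM$ then follows at once from Lemma~\ref{lem:upper2}, since $r_\cM$ is submodular and the poset of $\cX$-matroids is nonempty (it contains $\cM$, which is a loopless $\cX$-matroid). Lemma~\ref{lem:upper1} already gives $r_\cM(F)\le \val(F,\cS)$ for every proper $\cX$-sequence $\cS$, hence $r_\cM\le \val_\cX$. So the entire task reduces to exhibiting, for each $F\subseteq E$, one proper $\cX$-sequence $\cS$ with $\val(F,\cS)\le r_\cM(F)$, which then forces $\val(F,\cS)=r_\cM(F)=\val_\cX(F)$.

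First I would reduce to the case where $F$ is a flat. For general $F$ set $\bar F=\cl_{\cM}(F)$; since $\bar F\supseteq F$ we have $\val(\bar F,\cS)\ge \val(F,\cS)$ for every $\cS$, while $r_\cM(\bar F)=r_\cM(F)$, so a sequence achieving $\val(\bar F,\cS)=r_\cM(\bar F)$ also achieves $\val(F,\cS)=r_\cM(F)$. Thus it suffices to treat flats. Next I would reduce a flat $F$ to its connected components: let $F_1,\dots,F_m$ be the connected components of the restriction $\cM|_F$, so that $F=\bigsqcup_i F_i$ and $r_\cM(F)=\sum_i r_\cM(F_i)$. A short argument, using that $\cM$ is loopless, shows each $F_i$ is itself a flat: any $g\in\cl_{\cM}(F_i)\setminus F_i$ would lie in $\bar F=F$ and in a circuit meeting $F_i$, placing $g$ in the component $F_i$, a contradiction. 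Hence every $F_i$ is a connected flat, and the hypothesis supplies a proper $\cX$-sequence $\cS_i$ with $r_\cM(F_i)=\val(F_i,\cS_i)$.

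The heart of the argument is to glue the $\cS_i$ together. Writing $Y_i=\bigcup_{X\in\cS_i}X$, the equality $r_\cM(F_i)=\val(F_i,\cS_i)$ together with the ``furthermore'' clause of Lemma~\ref{lem:upper1} gives $r_\cM(F_i+e)=r_\cM(F_i)$ for every $e\in Y_i$; since $F_i$ is a flat this means $Y_i\subseteq F_i$. In particular the sets used by distinct $\cS_i$ lie in the pairwise disjoint sets $F_i$, so the concatenation $\cS=\cS_1\cS_2\cdots\cS_m$ is again a proper $\cX$-sequence: any set of $\cS_i$ is contained in $F_i$ and disjoint from the earlier components, so the only way it could be swallowed by earlier sets is within $\cS_i$, which its internal properness forbids. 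Denoting the length of $\cS_i$ by $k_i$, the containment $Y_i\subseteq F_i$ yields $\val(F_i,\cS_i)=|F_i|-k_i$, and for the concatenation $\val(F,\cS)=|F\cup\bigcup_iY_i|-\sum_ik_i=|F|-\sum_ik_i=\sum_i(|F_i|-k_i)=\sum_ir_\cM(F_i)=r_\cM(F)$, where the empty flat is the degenerate case $m=0$ handled by the empty sequence. This gives $\val_\cX(F)\le r_\cM(F)$ for every flat $F$, and via the reduction above for every $F\subseteq E$, establishing $\val_\cX=r_\cM$.

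I expect the gluing step to be the main obstacle: a priori the sequences $\cS_i$ could use elements scattered across $E$, in which case the concatenation need be neither proper nor have additive value. The observation that equality in Lemma~\ref{lem:upper1} forces $Y_i\subseteq\cl_{\cM}(F_i)=F_i$ is exactly what confines each sub-sequence to its own component and makes both the properness and the additivity of $\val$ fall out. Once $\val_\cX=r_\cM$ is in hand, submodularity of $\val_\cX$ is automatic and Lemma~\ref{lem:upper2} delivers the uniqueness conclusion.
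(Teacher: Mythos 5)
Your proof is correct and takes essentially the same route as the paper's: the lower bound from Lemma~\ref{lem:upper1}, the key observation that equality in Lemma~\ref{lem:upper1} forces $\bigcup_{X\in\cS_i}X$ into the flat $F_i$ (which is what makes the concatenation proper and value-additive), and the hypothesis applied to connected flats. The only difference is organizational --- you reduce directly via the closure and handle all components at once, while the paper runs a minimal-counterexample induction on rank and size and splits into two parts --- but the ingredients are identical and both arguments are sound.
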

\begin{proof}
Since  $r_{\cM}\leq \val_\cX$ for all $\cX$-matroids on $E$ by Lemma~\ref{lem:upper1}, it will suffice to show that, for each $F\subseteq E$, 
there is a proper $\cX$-sequence $\cS$  such that $r_{\cM}(F)={\rm val}(F,\cS)$. 

Suppose, for a contradiction, that this is false for some set $F$. 
We may assume that $F$ has been chosen such that $r_\cM(F)$ is as small as possible  and, subject to this condition, $|F|$ is as large as possible.
If $F$ is not a flat then $r_\cM(F+e)=r_\cM(F)$ for some $e\in E\sm F$ and we can now use the maximality of $|F|$ to deduce that there exists a proper  $\cX$-sequence $\cS$  such that 
$r_{\cM}(F+e)={\rm val}(F+e,\cS)$.
By Lemma~\ref{lem:upper1} and $r_{\cM}(F)=r_{\cM}(F+e)$, $e\in \bigcup_{X\in \cS} X$.
Hence,  ${\rm val}(F+e,\cS)={\rm val}(F,\cS)=r_{\cM}(F+e)=r_{\cM}(F)$. This would contradict the choice of $F$. Hence $F$ is a flat.

Suppose $F$ is not connected. Then we have $r_\cM(F)=r_\cM(F_1)+r_\cM(F_2)$ for some partition $\{F_1,F_2\}$ of $F$. Since 
$\cM$ is loopless,
$F_i$ is a flat of $\cM$ and  $1\leq r_\cM(F_i)<r_\cM(F)$ for both $i=1,2$. The choice of $F$ now implies that there exists 
a proper $\cX$-sequence $\cS_i$  such that $r_{\cM}(F_i)={\rm val}(F_i,\cS_i)$ for $i=1,2$. Since each $F_i$ is a flat, we have $X_i\subseteq F_i$ for all $X_i\in \cS_i$ by  Lemma~\ref{lem:upper1}. This implies that the concatenation  $\cS=(\cS_1,\cS_2)$ is a proper $\cX$-sequence and satisfies
$${\rm val}(F,\cS)={\rm val}(F_1,\cS_1)+{\rm val}(F_2,\cS_2)=r_{\cM}(F_1)+r_{\cM}(F_2)=r_{\cM}(F).$$ 
This contradicts the choice of $F$. 

Hence 
$F$ is a connected flat and we can use  the hypothesis of the lemma to deduce that there is a proper $\cX$-sequence $\cS$  such that $r_{\cM}(F)={\rm val}(F,\cS)$, as required.
\end{proof}

\subsection{Matroid elevations}

The {\em truncation} of a matroid $\cM_1=(E,\cI_1)$ of rank $k$ is the matroid $\cM_0=(E,\cI_0)$ of rank $k-1$, where $\cI_0=\{I\in \cI_1\,:\,|I|\leq k-1\}$.
Crapo~\cite{C} defined {\em matroid erection} as the `inverse operation' to truncation. So $\cM_1$ is an {\em erection} of $\cM_0$ if $\cM_0$ is the truncation of $\cM_1$. (For technical reasons we also consider $\cM_0$ to be a {\em trivial erection} of itself.)
Note that, although every matroid  has a unique truncation,  matroids may have several, or no,  non-trivial erections.

Crapo~\cite{C} showed that the poset of all erections of a matroid $\cM_0$ is actually a {lattice}. It is clear that the trivial erection of $\cM_0$ is the unique minimal element in this lattice. Since this is a finite lattice, there also exists a unique 
maximal element which Crapo called the  {\em free erection of $\cM_0$}.  

A {\em partial elevation} of  $\cM_0$ is any matroid $\cM$ which can be constructed from $\cM_0$ by a  sequence of erections. A {\em (full) elevation} of $\cM_0$ is a partial elevation $\cM$ which has no non-trivial erection.
The {\em free elevation} of $\cM_0$ is the matroid we get from $\cM_0$ by recursively constructing a sequence of 
free erections until we arrive at a matroid which has no non-trivial erection.
The set  of all partial elevations of $\cM_0$ forms a poset $P(\cM_0)$ under the weak order and $\cM_0$ is its unique minimal element.
Every maximal element of $P(\cM_0)$ will have no non-trivial erection so will be a full elevation of $\cM_0$. 
Given Crapo's result that the poset of all erections of $\cM_0$ is a lattice, it is tempting to conjecture that $P(\cM_0)$ will also be a lattice and that the free elevation of $\cM_0$ will be its unique maximal element. But this is false:  Brylawski  gives a counterexample based on the Vamos matroid in \cite{B} and we will construct another counterexample using $\cH$-matroids on $K_n$ in Section \ref{sec:notunique}. 
The following weaker result is given in  \cite{CJT2}.

\begin{lemma}[{\cite[Lemma 3.1]{CJT2}}]\label{lem:free_elevation}
Suppose that $\cM_0$ is a matroid. Then the free elevation of $\cM_0$ is a maximal element in the poset of all partial elevations of $\cM_0$.
\end{lemma}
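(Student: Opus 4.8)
The plan is to show, by contradiction, that no partial elevation strictly dominates the free elevation $\cM^*$ in the weak order. I would first record the construction of $\cM^*$ as a chain $\cM_0=\cM^{(0)},\cM^{(1)},\dots,\cM^{(t)}=\cM^*$ in which each $\cM^{(i+1)}$ is the free erection of $\cM^{(i)}$; thus $\rank\cM^{(i)}=\rank\cM_0+i$ and, by definition of the free elevation, $\cM^*$ is a full elevation, i.e.\ it has no non-trivial erection. Suppose $\cM^*\preceq\cM'$ for some $\cM'\in P(\cM_0)$. Since $\cM^*\preceq\cM'$ forces $\rank\cM^*\le\rank\cM'$ (a basis of $\cM^*$ stays independent in $\cM'$), I would write $\rank\cM'=\rank\cM^*+\ell$ with $\ell\ge0$; the aim is to prove $\cM'=\cM^*$.

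First I would reduce to the case $\ell=0$. Writing $T$ for truncation, the truncation tower of a partial elevation descends to $\cM_0$, so $\cN:=T^{\ell}(\cM')$ is again a partial elevation of $\cM_0$, now of the same rank as $\cM^*$. Moreover $\cM^*\preceq\cN$ follows straight from the definition of weak order: any independent set of $\cM^*$ has size at most $\rank\cM^*$, remains independent in $\cM'$, and hence in $\cN$. Granting the equal-rank statement that such an $\cN$ must equal $\cM^*$, I obtain $\cM^*=T^{\ell}(\cM')$; this realises $\cM'$ as an $\ell$-fold erection of $\cM^*$, and since $\cM^*$ has no non-trivial erection I conclude $\ell=0$ and $\cM'=\cM^*$, as wanted.

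The core is therefore the equal-rank statement: if $\cN$ is a partial elevation of $\cM_0$ with $\rank\cN=\rank\cM^*$ and $\cM^*\preceq\cN$, then $\cN=\cM^*$. I would prove this by induction on $t$, quantified over all base matroids. The base case $t=0$ is immediate, since then $\cM_0$ has no non-trivial erection and is its own unique partial elevation. For $t\ge1$ I would compare $\cM^*$ and $\cN$ from the top down. Truncation is monotone on matroids of equal rank, so iterating it $t-1$ times on $\cM^*\preceq\cN$ gives $\cM^{(1)}=T^{t-1}(\cM^*)\preceq T^{t-1}(\cN)$. Both $\cM^{(1)}$ and $T^{t-1}(\cN)$ are erections of $\cM_0$, and $\cM^{(1)}$ is by definition the free erection, hence the maximum of Crapo's erection lattice; so $T^{t-1}(\cN)\preceq\cM^{(1)}$ too, and therefore $T^{t-1}(\cN)=\cM^{(1)}$. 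Hence $\cN$ is a partial elevation of $\cM^{(1)}$, whose free elevation is $\cM^*$ and is reached in $t-1$ steps, so the induction hypothesis applied with base matroid $\cM^{(1)}$ yields $\cN=\cM^*$.

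The main obstacle, and the reason the comparison must be run downward by truncation rather than upward by erection, is that the free-erection operation is \emph{not} monotone for the weak order: if $\cN\preceq\cM$ have equal rank, the free erection of $\cM$ need not dominate a given erection of $\cN$. Indeed, were it monotone one could compare the two elevation chains step by step and conclude that $\cM^*$ dominates \emph{every} partial elevation, which is false in general since $P(\cM_0)$ may have several maximal elements. The equal-rank induction dodges this because the only place it invokes the lattice of erections is at the very top level, where $\cM^{(1)}$ and $T^{t-1}(\cN)$ are erections of the \emph{same} matroid $\cM_0$; there the free erection genuinely is the lattice maximum, and this single use, combined with monotonicity of truncation, is exactly what pins down $T^{t-1}(\cN)=\cM^{(1)}$ and lets the induction descend to the smaller base $\cM^{(1)}$.
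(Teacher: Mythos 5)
Your argument is correct: the two ingredients you isolate --- monotonicity of truncation with respect to the weak order, and the fact that the free erection is the unique maximum of Crapo's lattice of erections of a fixed matroid --- are exactly what is needed, and your equal-rank reduction followed by the top-down induction (truncate $\cN$ down to an erection of $\cM_0$, identify it with the free erection, then recurse with base matroid $\cM^{(1)}$) assembles them soundly. The paper gives no proof here, deferring to Lemma~3.1 of \cite{CJT2}, where the argument likewise compares the two truncation towers using these same two facts, so your proof is essentially the same, merely organized as a recursion on the base matroid rather than an upward induction along the chain.
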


Our next result extends Lemma \ref{lem:free_elevation} to $\cX$-matroids. Given a finite set $E$ and an integer $k$, let $\cU_k(E)$ 
be the {\em uniform matroid on $E$ of rank $k$}, i.e. the matroid on $E$ in which a set $F\subseteq E$ is independent if and only if $|F|\leq k$.

\begin{lemma}\label{lem:free_elevation0}
Let $E$ be a finite set, 
$\cX$ be a family of subsets of  $E$ of size at most $s$, 
and   $\cM_0$ be a maximal matroid in the poset of all $\cX$-matroids on $E$ with rank at most $s$.  
Suppose that $\cM_0\neq \cU_{s-1}(E)$.
Then the free elevation of $\cM_0$ is a maximal matroid in the poset of all $\cX$-matroids on $E$.
\end{lemma}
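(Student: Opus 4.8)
The plan is to establish two things: that the free elevation $\hat{\cM}$ of $\cM_0$ is itself an $\cX$-matroid, and that no $\cX$-matroid lies strictly above it in the weak order. Write $r_0=\rank\cM_0$, and recall that the free elevation is built as a chain of free erections $\cM_0\prec\cM_1\prec\dots\prec\cM_t=\hat{\cM}$ with $\rank\cM_i=r_0+i$. The maximality statement reduces fairly mechanically to Lemma~\ref{lem:free_elevation} through truncations, so I would treat the claim that $\hat{\cM}$ is an $\cX$-matroid as the heart of the argument, and organise it around the sizes of the sets in $\cX$ relative to $r_0$.

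For the first point, note that each $X\in\cX$ is a circuit of $\cM_0$, so $|X|\le r_0+1$. The key preservation fact I would prove is elementary: if $C$ is a circuit of a matroid $N$ with $|C|\le\rank N$, then $C$ remains a circuit of every erection of $N$, since one can compare the independent sets of $N$ and of its erection in all sizes $\le\rank N$. Because the ranks $\rank\cM_i$ are non-decreasing and at least $r_0$, every $X$ with $|X|\le r_0$ therefore stays a circuit along the whole chain. The only danger comes from spanning circuits, that is $X\in\cX$ with $|X|=r_0+1$, which the first (free) erection might turn into a basis; these can occur only when $r_0\le s-1$. To rule this out I would compare $\cM_0$ with $\cU_{s-1}(E)$ in the case $r_0=s-1$: every independent set of $\cM_0$ has size at most $s-1$ and all such sets are independent in $\cU_{s-1}(E)$, so $\cM_0\preceq\cU_{s-1}(E)$, and the inequality is strict because $\cM_0\ne\cU_{s-1}(E)$. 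Since each spanning circuit $X\in\cX$ has size exactly $s=r_0+1$, it is a circuit of $\cU_{s-1}(E)$; hence $\cU_{s-1}(E)$ is again an $\cX$-matroid and strictly dominates $\cM_0$, contradicting the maximality of $\cM_0$ among $\cX$-matroids of rank at most $s$. This forces $r_0=s$, so no member of $\cX$ is a spanning circuit, every $X$ survives every erection, and $\hat{\cM}$ is an $\cX$-matroid.

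For the maximality statement, let $\cM'$ be any $\cX$-matroid with $\hat{\cM}\preceq\cM'$ and set $r'=\rank\cM'\ge\rank\hat{\cM}$. I would use two facts: truncation preserves the weak order (if $\cM\preceq\cN$ then $T(\cM)\preceq T(\cN)$), and truncating the free elevation back down recovers $\cM_0$, namely $T^{\rank\hat{\cM}-r_0}(\hat{\cM})=\cM_0$. Truncating $\cM'$ down to rank $r_0$ gives a matroid $\cM'_0$ with rank function $\min(r_{\cM'},r_0)$; since each $X\in\cX$ is a circuit of $\cM'$ with $|X|\le r_0+1$, a one-line rank computation shows $X$ is still a circuit of $\cM'_0$, so $\cM'_0$ is an $\cX$-matroid of rank $r_0\le s$. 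Applying the two facts and truncating down to rank $r_0$ yields $\cM_0\preceq\cM'_0$, whence $\cM'_0=\cM_0$ by maximality of $\cM_0$. Thus $T^{r'-r_0}(\cM')=\cM_0$, so $\cM'$ is a partial elevation of $\cM_0$; as $\hat{\cM}$ is maximal in the poset of partial elevations by Lemma~\ref{lem:free_elevation} and $\hat{\cM}\preceq\cM'$, we conclude $\cM'=\hat{\cM}$.

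The main obstacle is the spanning-circuit case of the first point: guaranteeing that the free erection does not free up any member of $\cX$ of size $r_0+1$. This is exactly where the hypothesis $\cM_0\ne\cU_{s-1}(E)$ is indispensable, and the comparison with $\cU_{s-1}(E)$ is the cleanest route I see, since it converts the potential failure into a contradiction with the maximality of $\cM_0$ and forces $r_0=s$. The delicate part is that this comparison needs $\cU_{s-1}(E)$ to be an $\cX$-matroid, i.e. that the problematic members of $\cX$ have size exactly $s$; verifying this is the real content, and everything else is bookkeeping with truncations and the definition of the free elevation.
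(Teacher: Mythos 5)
Your architecture matches the paper's: (i) show that every member of $\cX$ is a non-spanning circuit of $\cM_0$, so that circuits of size at most the current rank are preserved by each erection and the whole elevation chain consists of $\cX$-matroids; (ii) reduce the maximality of the free elevation to Lemma~\ref{lem:free_elevation} by truncating a competing $\cX$-matroid down to rank at most $s$ and invoking the maximality of $\cM_0$. Your part (ii) is correct and is essentially the contrapositive of the paper's version of the same step, and your preservation fact for small circuits under erection is also fine.

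The gap is in part (i), at the sentence ``hence $\cU_{s-1}(E)$ is again an $\cX$-matroid.'' For this you need \emph{every} member of $\cX$ to be a circuit of $\cU_{s-1}(E)$, i.e.\ to have size exactly $s$; you have only checked the spanning circuits of $\cM_0$, while the hypothesis merely bounds the sizes of members of $\cX$ above by $s$. If $\cX$ contains a set of size less than $s$, that set is independent in $\cU_{s-1}(E)$, so the comparison yields no contradiction with the maximality of $\cM_0$; your case analysis also omits the possibility $r_0<s-1$, where a spanning circuit has size $r_0+1<s$ and is not even a circuit of $\cU_{s-1}(E)$. This is not just a presentational defect: with $E=\{1,\dots,5\}$, $\cX=\{\{1,2\},\{1,3\},\{2,3\},\{1,4,5\}\}$ and $s=3$, every $\cX$-matroid has rank $2$, the maximal one $\cM_0$ (elements $1,2,3$ parallel, $4$ and $5$ further points on the same line) is not $\cU_2(E)$, yet $\{1,4,5\}$ is a spanning circuit of $\cM_0$ which becomes independent in the free erection. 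So the non-spanning claim genuinely needs more than the stated hypotheses. In fairness, the paper's own proof asserts the same claim in one line with even less justification; both arguments become valid precisely when every member of $\cX$ has size exactly $s$ (the only situation in which the lemma is applied later, e.g.\ for $k$-uniform families). You correctly identified this as the crux, but the deduction as written does not close.
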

\begin{proof} 
Let $\cM$ be the free elevation of $\cM_0$. 
Since $\cM_0$ is an $\cX$-matroid and $\cM_0\neq \cU_{s-1}(E)$,
every set in $\cX$ is a non-spanning circuit of $\cM_0$. This implies that 
every  partial elevation of $\cM_0$ is an $\cX$-matroid. In particular, $\cM$ is an $\cX$-matroid.

Lemma \ref{lem:free_elevation} implies that $\cM$ is a maximal element in the poset of all partial elevations of $\cM_0$. 
Let $\cN$ be an $\cX$-matroid on $E$ which is not a partial elevation of $\cM_0$.
 Let $\cN_0$ be the truncation of $\cN$ to rank $s$ if $\cN$ has rank at least $s$, and otherwise let $\cN_0=\cN$.  
 Then $\cN_0\neq \cM_0$.  Since $\cM_0$ is a maximal $\cX$-matroid in the poset of all $\cX$-matroids on $E$ with rank at most $s$,  $\cN_0\not \succ\cM_0$ holds.
 Hence there exists $F\subseteq E$ with the properties that $|F|\leq s$, $F$ is dependent in $\cN_0$ and $F$ is independent in $\cM_0$. This implies that $F$ is dependent in $\cN$ and  independent in $\cM$ so $\cN\not \succ\cM$. Hence $\cM$ remains as a maximal element in the poset of all $\cX$-matroids on $E$.
\end{proof}

Lemma~\ref{lem:free_elevation0} can be applied whenever there exists at least one $\cX$-matroid $\cM$ on $E$ since we can truncate $\cM$ to obtain an $\cX$-matroid of rank at most $s$, and hence the poset of all $\cX$-matroids on $E$ with rank at most $s$ will be non-empty.\footnote{Note that in our main motivation, the Edmonds Problem, $\cX$ will be a family of minimal row dependencies of a matrix $A$ and hence the row matroid of $A$ will be an $\cX$-matroid.} 
In the next subsection, we give an explicit construction of  a maximal  $\cX$-matroid in the poset of all $\cX$-matroids on $E$ with rank at most $s$ whenever $\cX$ is an $s$-uniform families.

%

We close this subsection by stating a useful property of free elevations. We say that an $\cX$-matroid $\cM$ on a finite set $E$ has the {\em $\cX$-covering property} if 
every cyclic flat in $\cM$ is the union of sets in $\cX$.

\begin{lemma}[{\cite[Lemma 3.6]{CJT2}}]\label{lem:cover}
Let $\cM_0$ be a matroid on a finite set $E$ and $\cX$ be the family of non-spanning circuits of $\cM_0$.
Suppose that $E=\bigcup_{X\in \cX} X$.
Then the free elevation of $\cM_0$ has the $\cX$-covering property.
\end{lemma}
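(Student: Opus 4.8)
The plan is to prove the statement by induction on the number of free erections needed to pass from $\cM_0$ to its free elevation $\cM$. Writing the free-elevation sequence as $\cM_0=\cN_0,\cN_1,\dots,\cN_t=\cM$, where each $\cN_{j+1}$ is the free erection of $\cN_j$ and $\cN_t$ has no non-trivial erection, I would carry the inductive hypothesis that each $X\in\cX$ is a non-spanning circuit of $\cN_j$ and that $\cN_j$ has the $\cX$-covering property. The base case $j=0$ is immediate: a proper cyclic flat $F$ of $\cM_0$ contains, for each $e\in F$, a circuit through $e$ which (lying inside a proper flat) is a non-spanning circuit of $\cM_0$, hence a member of $\cX$; the only cyclic flat that is not proper is $E$, which equals $\bigcup_{X\in\cX}X$ by hypothesis. (We may assume $\cM_0$, and hence every $\cN_j$, is loopless, since loops form singleton members of $\cX$ that do not affect cyclic flats of positive rank.)

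For the inductive step set $\cM':=\cN_{j+1}$, $\cN:=\cN_j$, $r=\rank\cN$, and record three routine consequences of the truncation relation $r_{\cN}(A)=\min\{r_{\cM'}(A),r\}$. (i) For every non-spanning set $A$ of $\cN$ one has $r_{\cM'}(A)=r_{\cN}(A)$; in particular every $X\in\cX$ stays a non-spanning circuit of $\cM'$, and ${\rm cl}_{\cN}(A)\subseteq{\rm cl}_{\cM'}(A)$. (ii) Every flat $F$ of $\cM'$ with $r_{\cM'}(F)\le r-1$ is a flat of $\cN$ with $\cM'|F=\cN|F$, so the cyclic flats of $\cM'$ of rank at most $r-1$ coincide with those of $\cN$ and are unions of $\cX$-sets by induction. (iii) For every hyperplane $H$ of $\cM'$ one has $\cM'|H=\cN|H$. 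The only cyclic flats of $\cM'$ not yet accounted for are $E$ (covered by $E=\bigcup_{X\in\cX}X$) and the cyclic hyperplanes, so the problem reduces to these.

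Fix a cyclic hyperplane $H$ of $\cM'$ and write $\cN_H:=\cM'|H=\cN|H$, a rank-$r$ matroid with no coloops. I would first show that every proper cyclic flat $L$ of $\cN_H$ is a cyclic flat of $\cN$ contained in $H$: since $r_{\cN}(L)<r$, fact (i) forces ${\rm cl}_{\cN}(L)\subseteq{\rm cl}_{\cM'}(L)\subseteq{\rm cl}_{\cM'}(H)=H$, and as $L$ is a flat of $\cN_H$ this yields ${\rm cl}_{\cN}(L)=L$, with cyclicity inherited; such $L$ is a union of $\cX$-sets by induction. To finish it suffices to show these $L$ exhaust $H$. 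Granting the key claim below, take $e\in H$ and a non-spanning circuit $C$ of $\cN$ with $e\in C\subseteq H$; then $G:={\rm cl}_{\cN}(C)$ is a proper flat of $\cN$ with $G\subseteq H$ by (i), and one checks that the connected component $K$ of $e$ in $\cN|G$ (a component of size at least $2$ in a loopless matroid, with ${\rm cl}_{\cN}(K)\subseteq{\rm cl}_{\cN}(G)=G$) is a cyclic flat of $\cN$ with $e\in K\subseteq H$. Hence $H$ is the union of the cyclic flats of $\cN$ it contains, and therefore a union of $\cX$-sets, completing the induction.

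The crux, and the step I expect to be the main obstacle, is the claim used above: \emph{in the free erection $\cM'$, every element $e$ of a cyclic hyperplane $H$ lies in a non-spanning circuit of $\cN$ contained in $H$} (equivalently, $\cN|H$ has no element meeting only spanning circuits). This is exactly where the \emph{freeness} of the erection must enter: if some $e\in H$ were covered inside $H$ only by spanning circuits of $\cN$, one should be able to "lift $e$ out of $H$'' and exhibit an erection of $\cN$ in which a set through $e$ that is dependent in $\cM'$ becomes independent, contradicting the maximality of the free erection in Crapo's lattice of erections. Making this precise requires the explicit description of the hyperplanes of the free erection (via the linking classes of spanning circuits) from \cite{C,CJT2}; the difficulty is that a cyclic hyperplane is a spanning, non-closed set of $\cN$ and so cannot be fed directly into the inductive hypothesis, and one must show that the free erection creates only those rank-$r$ flats genuinely forced by the non-spanning circuits, never the "uniform-at-the-top'' sets (such as a lone freed spanning circuit) whose appearance would break the covering property. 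I would isolate this as a separate lemma on free erections, proved from the erection-lattice structure, before assembling the induction above.
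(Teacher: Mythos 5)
Your induction along the free-erection sequence, the truncation bookkeeping in (i)--(iii), and the reduction of the problem to cyclic hyperplanes of a single free erection are all correct, as is the final assembly step (passing from a non-spanning circuit $C\subseteq H$ of $\cN$ through $e$ to the cyclic flat given by the connected component of $e$ in $\cN|{\rm cl}_{\cN}(C)$). Note that the paper itself does not prove this lemma -- it imports it from \cite{CJT2} -- so I am judging the argument on its own terms.

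The problem is that, as you yourself say, the entire content of the lemma is concentrated in the one claim you do not prove: that every element $e$ of a cyclic hyperplane $H$ of the free erection lies in a non-spanning circuit of $\cN$ contained in $H$. Everything you do establish is routine consequence of the truncation identity $r_{\cN}(A)=\min\{r_{\cM'}(A),r\}$ and would hold for \emph{any} erection; the freeness of the erection is used nowhere in what you actually write, yet the statement is false for general erections (an erection can be chosen to keep a spanning circuit of $\cN$ dependent inside a hyperplane none of whose non-spanning circuits cover it). So the proposal is a correct reduction, not a proof. To close the gap you would need Crapo's description of erections as families of spanning, locally closed sets each containing every basis at most once, and then argue: if $e\in H$ lies in no non-spanning circuit of $\cN$ inside $H$, then $H\sm\{e\}$ is still spanning and locally closed (your own observation that $e\in{\rm cl}_{\cN}(A)$ for non-spanning $A\subseteq H\sm\{e\}$ would produce a forbidden circuit does half of this), so replacing $H$ by $H\sm\{e\}$ yields another erection in which the spanning circuit of $\cN$ through $e$ inside $H$ becomes independent; since the free erection dominates every erection in the weak order, that circuit would be independent in $\cM'$, contradicting cyclicity of $H$. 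Verifying that this modified family really satisfies all of Crapo's axioms (in particular that the bases of $\cN$ through $e$ inside $H$ can be re-closed without landing in two hyperplanes) is precisely the non-trivial part, and until it is written out the proof is incomplete.
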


\subsection{Uniform $\cX$-matroids}
%

A family $\cX$ of sets  is  {\em $k$-uniform} if each set in $\cX$ has size $k$.
Given a $k$-uniform family $\cX$, 
the {\em $\cX$-uniform  system  ${\cal U}_\cX$} is defined as the pair $(E, {\cal I}_{\cX})$, where $E=\bigcup_{X\in\cX}X$ and 
\[
{\cal I}_{\cX}:=\{F\subseteq E:  |F|\leq k \text{ and } F\notin \cX\}.
\]
We first characterise when ${\cal U}_{\cX}$ 
is a matroid.
We say that  the $k$-uniform family $\cX$ is {\em union-stable} if,
for any  $X_1, X_2\in \cX$ and $e\in X_1\cap X_2$, 
either $|(X_1\cup X_2)-e|>k$ or   $(X_1\cup X_2)-e\in \cX$.

\begin{lemma}\label{lem:Xstable}
Suppose that $\cX$ is  a $k$-uniform  family.
Then $\cU_\cX$ is a matroid if and only if $\cX$ is union-stable.
\end{lemma}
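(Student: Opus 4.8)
The plan is to verify the matroid independence axioms for $\cU_\cX=(E,\cI_\cX)$ directly. I would first record the structure forced by the definition of $\cI_\cX$: since $\cX$ is $k$-uniform, every set of size $<k$ is independent, every set of size $>k$ is dependent, and a set of size exactly $k$ is dependent if and only if it lies in $\cX$. Hence a set of size at most $k$ is dependent precisely when it belongs to $\cX$, and so whenever $\cU_\cX$ is a matroid the members of $\cX$ are exactly its circuits (each is a minimal dependent set, since all its proper subsets have size $<k$). Nonemptiness ($\emptyset\in\cI_\cX$) and heredity follow at once from this description, so in both directions the whole content is in matching up the exchange axiom with union-stability.

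For the "only if" direction I would assume $\cU_\cX$ is a matroid and take distinct $X_1,X_2\in\cX$ with $e\in X_1\cap X_2$ and $|(X_1\cup X_2)-e|\le k$. Since $|X_1\cup X_2|=2k-|X_1\cap X_2|$, the size bound forces $|X_1\cap X_2|=k-1$; writing $C=X_1\cap X_2$, $X_1=C+x_1$, $X_2=C+x_2$, the fact that $X_1,X_2$ are circuits gives $x_1,x_2\in\cl(C)$, whence $r_{\cU_\cX}((C-e)+x_1+x_2)\le r_{\cU_\cX}(C)=k-1$. Thus $(X_1\cup X_2)-e=(C-e)+x_1+x_2$ is a dependent set of size $k$, so by the structural observation it lies in $\cX$, which is exactly union-stability.

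For the "if" direction I would assume $\cX$ is union-stable and verify the exchange axiom for $F_1,F_2\in\cI_\cX$ with $|F_1|<|F_2|$. If $|F_1|\le k-2$ every extension stays below size $k$ and is automatically independent, and if $F_1\subseteq F_2$ the unique new element works; the only remaining case is $|F_1|=k-1$, $|F_2|=k$, $|F_2\setminus F_1|\ge 2$. Here I would argue by contradiction: suppose $F_1+b\in\cX$ for every $b\in F_2\setminus F_1$, and set $S=\{x\in E\setminus F_1: F_1+x\in\cX\}$, so that $F_2\setminus F_1\subseteq S$. The crux is the following claim, proved by induction on $t$: for all $T\subseteq S$ and $R\subseteq F_1$ with $|T|=t$ and $|R|=t-1$, the size-$k$ set $(F_1\setminus R)\cup T$ lies in $\cX$. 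Applying it with $T=F_2\setminus F_1$ and $R=F_1\setminus F_2$ (whose sizes differ by exactly one, as $|F_1|=k-1$ and $|F_2|=k$) yields $F_2=(F_1\setminus R)\cup T\in\cX$, contradicting $F_2\in\cI_\cX$ and finishing the exchange step.

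The main obstacle is this inductive claim, and the difficulty is purely the combinatorial bookkeeping that union-stability only "fires" on two members of $\cX$ differing in a single element. In the step from $t$ to $t+1$, given $T=\{x_1,\dots,x_{t+1}\}$ and $R=\{r_1,\dots,r_t\}$, I would apply the induction hypothesis to the two level-$t$ sets $Y_1=(F_1\setminus(R-r_t))\cup(T-x_{t+1})$ and $Y_2=(F_1\setminus(R-r_t))\cup(T-x_t)$, which differ exactly in $x_t$ versus $x_{t+1}$ and share the $(k-1)$-set $C=Y_1\cap Y_2$ with $r_t\in C$. Union-stability then gives $(Y_1\cup Y_2)-r_t\in\cX$, and a direct computation shows $(Y_1\cup Y_2)-r_t=(F_1\setminus R)\cup T$, advancing the induction. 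I expect the only real care is to arrange the two level-$t$ sets so that their symmetric difference has size exactly two and the deleted element lies in their common part; once the indices are set up as above, each application of union-stability is immediate.
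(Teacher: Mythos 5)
Your proof is correct, and it takes a different route from the paper's. The paper's argument is only a sketch: it identifies the candidate circuit collection $\cC=\cX\cup\{C: |C|=k+1,\ X\not\subseteq C \text{ for all } X\in\cX\}$ and asserts that checking the circuit elimination axiom for $\cC$ is equivalent to union-stability. In that framework the verification is short because any set of size at least $k+1$ automatically contains a member of $\cC$, so the only case of elimination that is not automatic is two members of $\cX$ whose union has size $k+1$ --- exactly one application of union-stability. You instead verify the independence augmentation axiom directly, and the price you pay is the hard case $|F_1|=k-1$, $|F_2|=k$, where you need the inductive ``spreading'' claim that if every element of $F_2\setminus F_1$ completes $F_1$ to a member of $\cX$ then $F_2\in\cX$; your induction is set up correctly (the two level-$t$ sets $Y_1,Y_2$ do differ in exactly one element, $r_t$ lies in their intersection, and $|(Y_1\cup Y_2)-r_t|=k$ so union-stability fires), and the ``only if'' direction via closures of the common $(k-1)$-set is also fine. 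So both arguments work; the circuit-axiom route is shorter because weak circuit elimination never needs to be iterated, while your route makes explicit the combinatorial content that the paper leaves as ``straightforward to check.'' One small imprecision: the members of $\cX$ are not \emph{all} the circuits of $\cU_\cX$ (the $(k+1)$-sets containing no member of $\cX$ are circuits too), only the circuits of size at most $k$; this does not affect your argument, since you only use that a dependent set of size $k$ must lie in $\cX$.
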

\begin{proof}
Let $\cC=\cX\cup \{C\subseteq E:  |C|= k \text{ and  } X\not\subseteq C \text{ for all }X\in\cX\}$. It is straightforward to check $\cU_X$ is a matroid if and only if $\cC$ satisfies the matroid circuit axioms and that the latter property holds if and only if $\cX$ is union-stable.
%
%
\end{proof}

Given an arbitrary $k$-uniform family $\cX$, we construct the {\em union-stable closure $\bar \cX$  of $\cX$} by first putting $\bar \cX=\cX$ and then recursively adding $(X_1\cup X_2)-e$ to $\bar \cX$ whenever $X_1,X_2\in \bar \cX$, $|X_1\cup X_2|=k+1$ and $e\in X_1\cap X_2$. It is straightforward to check that the resulting family $\bar \cX$ is $k$-uniform and union-stable and that  $\cU_{\bar\cX}$ is a maximal matroid in   the poset of all $\cX$-matroids on $E$ with rank at most $k$. We can now apply Lemma \ref{lem:free_elevation0} to deduce:

\begin{lemma}\label{lem:free_elevation1}
Let  $\cX$ be a $k$-uniform family of sets such that
$\cU_{\bar \cX}\neq\cU_{k-1}(E)$.
Then the free elevation of $\cU_{\bar \cX}$ is a maximal $\cX$-matroid on $E$.
\end{lemma}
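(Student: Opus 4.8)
The plan is to derive this lemma as an immediate application of Lemma~\ref{lem:free_elevation0}, taking the parameter $s$ there to be $k$ and taking $\cM_0=\cU_{\bar\cX}$. The entire argument reduces to checking that the three hypotheses of Lemma~\ref{lem:free_elevation0} are satisfied; no new matroid construction is required beyond the union-stable closure $\bar\cX$ already introduced.

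First I would note that $\cX$ is a family of subsets of $E$ of size at most $s=k$: since $\cX$ is $k$-uniform, each of its members has size exactly $k$, hence at most $k=s$. This is the only use of the uniformity hypothesis and the sole reason for the choice $s=k$. Next I would invoke the property of the union-stable closure recorded just before the lemma, namely that $\cU_{\bar\cX}$ is a maximal matroid in the poset of all $\cX$-matroids on $E$ of rank at most $k$. In doing so I would spell out, if needed, why $\cU_{\bar\cX}$ is indeed an $\cX$-matroid: by Lemma~\ref{lem:Xstable} the union-stable family $\bar\cX$ makes $\cU_{\bar\cX}$ a matroid, and since $\cX\subseteq\bar\cX$ and $\bar\cX$ is $k$-uniform, every $X\in\cX$ is a dependent set of size $k$ all of whose proper subsets (having size below $k$) are independent, so $X$ is a circuit. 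Finally, the non-degeneracy hypothesis $\cM_0\neq\cU_{s-1}(E)$ of Lemma~\ref{lem:free_elevation0} is precisely the standing assumption $\cU_{\bar\cX}\neq\cU_{k-1}(E)$.

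With these three hypotheses verified, Lemma~\ref{lem:free_elevation0} yields that the free elevation of $\cM_0=\cU_{\bar\cX}$ is a maximal matroid in the poset of all $\cX$-matroids on $E$, which is exactly the claim. The only real care needed is the bookkeeping that matches the size bound $s$ of Lemma~\ref{lem:free_elevation0} with the uniformity parameter $k$, so that its phrase ``rank at most $s$'' lines up with ``rank at most $k$'' in the construction of $\bar\cX$; once $s=k$ is fixed, every hypothesis aligns and all the substantive work has already been carried out in Lemma~\ref{lem:free_elevation0} and in the construction of the union-stable closure. Accordingly I would expect the main obstacle to lie not in this lemma itself but in those two ingredients.
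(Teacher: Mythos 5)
Your proposal is correct and matches the paper's own argument: the paper likewise obtains Lemma~\ref{lem:free_elevation1} as an immediate corollary of Lemma~\ref{lem:free_elevation0} with $s=k$ and $\cM_0=\cU_{\bar\cX}$, relying on the (stated as straightforward) fact that $\cU_{\bar\cX}$ is a maximal matroid in the poset of all $\cX$-matroids on $E$ of rank at most $k$. Your extra check that each $X\in\cX$ is a circuit of $\cU_{\bar\cX}$ is a correct and welcome piece of the "straightforward" verification the paper leaves implicit.
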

%
\noindent Note that if $\cU_{\bar \cX}= \cU_{k-1}(E)$ then $\cU_{\bar \cX}$ is the unique maximal $\cX$-matroid on $E$ but the free-elevation of $\cU_{\bar \cX}$ is the {\em free matroid on $E$} i.e. the matroid in which every subset of $E$ is independent.

Suppose that  $G$ and $H$ are graphs with $|E(H)|=k$. We will also assume that every edge of $G$ belongs to a subgraph  which is isomorphic to  $H$ (we can reduce to this case by deleting all edges of $G$ which do not belong to copies of $H$). Recall that  $\{H\}_G$ denotes the $k$-uniform family containing all edge sets of copies of $H$ in $G$.
The graph $H$ is said to be {\em union-stable} on  $G$ if $\{H\}_G$ is union-stable, i.e., for any two distinct copies $H_1$ and $H_2$ of $H$ in $G$  and any $e\in E(H_1)\cap E(H_2)$, 
either $H_1\cup H_2-e$  is isomorphic to $H$ or $|E(H_1\cup H_2-e)|>k$. To simplify notation we denote the uniform $\{H\}_G$-matroid  $\cU_{\{H\}_G}$ by $\cU_H(G)$ when $H$ is union-stable.
Examples of union-stable graphs on $K_n$ are stars, cycles, complete graphs, and complete bipartite graphs.
Lemmas~\ref{lem:free_elevation0} and \ref{lem:Xstable} now give:

\begin{lemma}\label{lem:stable}
Suppose that $G$  and $H$ are graphs.
Then $\cU_H(G)$ is a matroid if and only if $H$ is union-stable on $G$.
Furthermore, if $\cU_H(G)$ is a matroid, then its free elevation is a maximal $H$-matroid on $G$.
\end{lemma}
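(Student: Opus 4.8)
The plan is to read off both assertions from the two cited lemmas after identifying $\cU_H(G)$ with an $\cX$-uniform system. Put $k=|E(H)|$ and $\cX=\{H\}_G$, the family of edge sets of copies of $H$ in $G$. Since every copy of $H$ has exactly $k$ edges, $\cX$ is a $k$-uniform family of subsets of $E=E(G)$, and by definition $\cU_H(G)=\cU_{\cX}$. The first assertion is then immediate from Lemma~\ref{lem:Xstable}: $\cU_{\cX}$ is a matroid if and only if $\cX$ is union-stable, which by the definition of union-stability of $H$ on $G$ is exactly the condition that $H$ is union-stable on $G$.

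For the second assertion I would assume $\cU_H(G)$ is a matroid, so that $\cX=\{H\}_G$ is union-stable and hence equals its own union-stable closure $\bar\cX$. By the observation recorded just before Lemma~\ref{lem:free_elevation1}, $\cU_{\bar\cX}$ is a maximal matroid in the poset of all $\cX$-matroids on $E$ of rank at most $k$; since $\bar\cX=\cX$ here, this says that $\cM_0:=\cU_H(G)$ is such a maximal matroid. I would then invoke Lemma~\ref{lem:free_elevation0} with $s=k$ and this $\cM_0$: every member of $\cX$ has size exactly $k=s$, so the size hypothesis holds, and the conclusion yields that the free elevation of $\cU_H(G)$ is a maximal $\cX$-matroid on $E$, that is, a maximal $H$-matroid on $G$.

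The only hypothesis of Lemma~\ref{lem:free_elevation0} needing attention, and the point I expect to require the most care, is the requirement $\cM_0\neq\cU_{s-1}(E)$. Here $\cU_H(G)=\cU_{k-1}(E(G))$ would force every $k$-subset of $E(G)$ to be the edge set of a copy of $H$, a degenerate configuration such as $G$ being a single copy of $H$. In that case the free elevation is the free matroid on $E(G)$, in which no copy of $H$ is a circuit, so one cannot simply quote Lemma~\ref{lem:free_elevation0}; instead one observes directly that $\cU_{k-1}(E(G))$ is itself the maximal $H$-matroid, precisely the exception already noted after Lemma~\ref{lem:free_elevation1}. Apart from isolating and handling this degenerate case, the proof is a direct concatenation of Lemmas~\ref{lem:Xstable} and~\ref{lem:free_elevation0} with no further combinatorial content.
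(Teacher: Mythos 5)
Your proof matches the paper's: the paper gives no separate argument for this lemma, deriving it directly from Lemmas~\ref{lem:Xstable} and~\ref{lem:free_elevation0} exactly as you do (via the observation that a union-stable $\cX=\{H\}_G$ equals its own closure $\bar\cX$, so $\cU_{\bar\cX}=\cU_H(G)$ is a maximal $\cX$-matroid of rank at most $k$). Your flagging of the degenerate case $\cU_H(G)=\cU_{k-1}(E(G))$, where the free elevation is the free matroid and Lemma~\ref{lem:free_elevation0} cannot be quoted, is a genuine point that the paper's statement elides and only addresses in the remark following Lemma~\ref{lem:free_elevation1}; isolating it as you do is the careful way to finish.
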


\section{Weakly Saturated Sequences}
Let   $\cX$ be a family of subsets of a finite set $E$, and $F_0\subseteq E$.
Recall that a proper $\cX$-sequence $(X_1, X_2,  \dots, X_m)$ is a  weakly $\cX$-saturated sequence from $F_0$ if 
$|X_i\setminus (F_0\cup \bigcup_{j<i} X_j)|=1$ for all $i$ with $1\leq i\leq m$.
%
We say that a set $F\subseteq E$ {\em can be constructed by a weakly $\cX$-saturated sequence from $F_0$} if 
there is a weakly $\cX$-saturated sequence 
$\cS$ from $F_0$ with  $F=F_0\cup \bigcup_{X\in \cS} X$. Note that if this is the case then we will have ${\rm val}(F,\cS)= |F_0|$. We can combine this simple observation with Lemma \ref{lem:conflat} to give several examples of unique maximality.

\begin{lemma}\label{lem:uniform}
Let $\cX$ be a  $k$-uniform family of subsets of a finite set $E$.
Suppose that $E$ can be constructed by a weakly $\cX$-saturated sequence from some $X_0\in \cX$.
Then the rank $k-1$ uniform matroid  $\cU_{k-1}(E)$ is the unique maximal $\cX$-matroid on $E$ and its rank function is ${\rm val}_{\cX}$.
\end{lemma}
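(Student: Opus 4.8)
The plan is to verify the hypotheses of Lemma~\ref{lem:conflat} for the matroid $\cM=\cU_{k-1}(E)$, which then delivers both conclusions simultaneously. First I would record the basic properties of this candidate. The circuits of $\cU_{k-1}(E)$ are exactly the $k$-element subsets of $E$, so each $X\in\cX$ (having $|X|=k$) is a circuit and $\cM$ is an $\cX$-matroid; assuming $k\ge 2$ (the case $k=1$ being degenerate), $r_\cM(\{e\})=1$ for every $e\in E$, so $\cM$ is loopless. Its rank function is $r_\cM(F)=\min\{|F|,\,k-1\}$, and its closure fixes every set of size at most $k-2$ and sends every larger set to $E$, so the flats of $\cM$ are the subsets of size at most $k-2$ together with $E$.

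Next I would determine the connected flats. A flat of size at most $k-2$ is independent, hence connected only when it has at most one element, so the connected flats of size at most $k-2$ are precisely the flats of size at most one. Since $|E|\ge k>k-1\ge 1$, we have $0<k-1<|E|$, so the uniform matroid $\cU_{k-1}(E)$ is connected and $E$ is a connected flat as well. Thus the connected flats of $\cM$ are $E$ together with the flats of size at most one, and it remains to exhibit, for each of them, a proper $\cX$-sequence attaining its rank.

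For the flats of size at most one I would use the empty $\cX$-sequence $\cS=()$, for which $\val(F,\cS)=|F|$; this gives $\val(\emptyset,())=0=r_\cM(\emptyset)$ and $\val(\{e\},())=1=r_\cM(\{e\})$. The crux is the flat $E$, where $r_\cM(E)=k-1$. Let $(X_1,\dots,X_m)$ be a weakly $\cX$-saturated sequence from $X_0$ with $E=X_0\cup\bigcup_i X_i$; since each step adds exactly one new element, $|E|=k+m$. The key idea is to \emph{prepend} $X_0$. Because $|X_i\setminus(X_0\cup\bigcup_{j<i}X_j)|=1>0$ for every $i$, the sequence $\cS=(X_0,X_1,\dots,X_m)$ is a proper $\cX$-sequence, and
\[
\val(E,\cS)=\Big|E\cup\bigcup_{i=0}^{m}X_i\Big|-(m+1)=|E|-(m+1)=(k+m)-(m+1)=k-1=r_\cM(E).
\]

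Having supplied a proper $\cX$-sequence for every connected flat, I would invoke Lemma~\ref{lem:conflat} to conclude that $\val_\cX=r_\cM$ and that $\cM=\cU_{k-1}(E)$ is the unique maximal $\cX$-matroid on $E$, which is exactly the assertion. The step I expect to be the main obstacle is the treatment of the flat $E$: the obvious candidate, the saturated sequence $(X_1,\dots,X_m)$ itself, yields $\val(E,\cdot)=|X_0|=k$, one more than the target, and the argument succeeds only because prepending the member $X_0\in\cX$ lowers the value to $k-1$ while preserving properness. Everything else is routine bookkeeping about flats and connectivity in a uniform matroid.
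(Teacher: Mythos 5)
Your proof is correct and takes essentially the same route as the paper: both reduce via Lemma~\ref{lem:conflat} to the connected flats of $\cU_{k-1}(E)$ (of which $E$ is the only nontrivial one) and then handle $E$ by prepending $X_0$ to the weakly $\cX$-saturated sequence to get a proper $\cX$-sequence of value $k-1$. Your extra bookkeeping on the small flats and on looplessness is harmless and slightly more careful than the paper's one-line dismissal.
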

\begin{proof}
We denote $\cU=\cU_{k-1}(E)$.
Since $\cU$ is uniform, $E$ is the only connected flat in $\cU$ and hence, by Lemma \ref{lem:conflat}, it will suffice to show that
there is a proper $\cX$-sequence $\cS$ 
such that 
$r_{\cU}(E)={\rm val}_{\cX}(\cS, E)$.  
By hypothesis, there is a   weakly saturated $\cX$-sequence  $\cS_0$ from $X_0$ to $E$.
Let $\cS$ be the proper $\cX$-sequence obtained by  inserting $X_0$ at the beginning of  $\cS_0$.
Then 
$${\rm val}_{\cX}(\cS, E)=
{\rm val}_{\cX}(\cS_0, E)-1= |X_0|-1=k-1=r_{\cU}(E),$$
as required.
\end{proof}

The same proof technique can handle a slightly more complicated situation.

\begin{lemma}\label{lem:Xuniform}
Let $\cX$ be a $k$-uniform, union-stable family of sets.
Suppose that $E$ can be constructed by a weakly $\cX$-saturated sequence from some $Y\subseteq E$
with $|Y|=k$ and $Y\notin \cX$.
Then $\cU_{\cX}$ is the unique maximal $\cX$-matroid on $E$
and its rank function is ${\rm val}_{\cX}$ .
\end{lemma}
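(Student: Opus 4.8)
The plan is to apply Lemma~\ref{lem:conflat} to the matroid $\cM=\cU_\cX$. Since $\cX$ is union-stable, $\cU_\cX$ is a matroid by Lemma~\ref{lem:Xstable}; it is loopless (the $k$-uniform family $\cX$ contains no singleton, and we may assume $k\ge 2$), each $X\in\cX$ is a circuit of size $k$, and the hypothesis that $Y\notin\cX$ with $|Y|=k$ forces $\cU_\cX$ to have rank exactly $k$ (there are no independent sets of size $>k$). Thus $\cU_\cX$ is a loopless $\cX$-matroid, and by Lemma~\ref{lem:conflat} it suffices to produce, for each connected flat $F$ of $\cU_\cX$, a proper $\cX$-sequence $\cS$ with $\val(F,\cS)=r_{\cU_\cX}(F)$.

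First I would classify the connected flats. The circuits of $\cU_\cX$ are the members of $\cX$ (of size $k$ and rank $k-1$) together with the $(k+1)$-element subsets of $E$ containing no member of $\cX$ (of rank $k$). A connected flat $F$ with $|F|\ge 2$ must contain a circuit, so $r_{\cU_\cX}(F)\ge k-1$; hence $r_{\cU_\cX}(F)\in\{k-1,k\}$. A flat of rank $k$ equals $E$, and a singleton flat has rank $1$ (this case arises only when $k\ge 3$). The key observation is that a connected flat $F$ of rank $k-1$ can contain no independent $k$-subset, so \emph{every} $k$-subset of $F$ lies in $\cX$; equivalently $\cU_\cX|F=\cU_{k-1}(F)$.

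With this classification the required sequences are readily built. For a singleton $F$ the empty proper $\cX$-sequence $\cS$ gives $\val(F,\cS)=|F|=1=r_{\cU_\cX}(F)$. For $F=E$ (when $E$ is connected) the given weakly $\cX$-saturated sequence $\cS_0$ from $Y$ satisfies $\val(E,\cS_0)=|Y|=k=r_{\cU_\cX}(E)$. For a connected flat $F$ of rank $k-1$, I would choose any $X_0\subseteq F$ with $|X_0|=k$ (which lies in $\cX$, since every $k$-subset of $F$ does) and then greedily build a weakly $\cX$-saturated sequence inside $F$: at each step, adjoining a new element $e\in F$ to any $k-1$ already-used elements yields a $k$-subset of $F$, hence a member of $\cX$ adding exactly $e$. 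Prepending $X_0$, exactly as in the proof of Lemma~\ref{lem:uniform}, produces a proper $\cX$-sequence $\cS$ with all of its sets contained in $F$ and $\val(F,\cS)=|F|-|\cS|=k-1=r_{\cU_\cX}(F)$.

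The main obstacle is the flat-classification step, and in particular verifying that a rank-$(k-1)$ connected flat restricts to a uniform matroid, so that all of its $k$-subsets belong to $\cX$; this is what makes the internal weakly $\cX$-saturated construction possible. Once it is established, the three constructions above cover all connected flats, and Lemma~\ref{lem:conflat} immediately yields both that $\val_\cX=r_{\cU_\cX}$ and that $\cU_\cX$ is the unique maximal $\cX$-matroid on $E$, completing the proof.
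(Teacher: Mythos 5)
Your proof is correct and follows essentially the same route as the paper's: both invoke Lemma~\ref{lem:conflat}, observe that a connected flat of $\cU_{\cX}$ has rank $k$ (hence equals $E$, handled by the hypothesised weakly $\cX$-saturated sequence from $Y$) or rank $k-1$ (in which case every $k$-subset lies in $\cX$, so an internal weakly saturated sequence built from a $k$- or $(k-1)$-element seed gives $\val(F,\cS)=k-1$). Your extra remarks --- that $Y\notin\cX$ pins the rank at exactly $k$, and that singleton flats are covered by the empty sequence --- are harmless refinements of the same argument.
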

%
\begin{proof}
By Lemma \ref{lem:conflat}, it will suffice to show that
there is a proper $\cX$-sequence $\cS$ 
such that 
$r_{\cU_\cX}(F)={\rm val}_{\cX}(\cS, F)$ for every connected flat  in $\cU_\cX$.  Let $F$ be a connected flat  in $\cU_\cX$.
Then the definition of $\cU_\cX$ implies that the rank of $F$ is either $k$ or $k-1$.

%
%

Suppose that the rank of $F$ is $k$. Then we have $F=E$.
By hypothesis, $E$ can be constructed from $Y$ by a weakly $\cX$-saturated sequence $\cS$.
Then
$r_{\cU_\cX}(E)=k=|Y|={\rm val}(E,\cS)$ follows.

Hence we may assume that the rank of $F$ is $k-1$. Since $F$ is a flat in $\cU_{\cX}$, 
every subset of  $F$ of size $k$ belongs to $\cX$.
We will use this fact to define a weakly $\cX$-saturated sequence for $F$. 
Choose a set $F_0$ of $k-1$ elements in $F$,
and let $X_e=F_0\cup \{e\}$ for each $e\in F\setminus F_0$.
Then each $X_e\in \cX$, and $\{X_e: e\in F\setminus F_0\}$ (ordered arbitrarily) is  a weakly $\cX$-saturated sequence $\cS'$ which constructs  $F$ from $F_0$. 
We have $\val(\cS', F)=
|F_0|=k-1=r_{\cU_\cX}(F)$ as required.
\end{proof}



\subsection*{Applications to matroids on graphs.}

Given graphs $G$ and $H$ and subgraphs $F_0,F\subseteq G$, we say that  
{\em $F$ can be constructed by a weakly $H$-saturated sequence from $F_0$} if $E(F)$ can be constructed by a weakly $\{H\}_G$-saturated sequence from $E(F_0)$.

\begin{lemma}\label{lem:matching}
Let $H_k$ be the vertex-disjoint union of $k$ copies of $K_2$. 
Then $K_n$ can be constructed by a weakly saturated $H_k$-sequence from any copy of $H_k$ in $K_n$ whenever $n\geq 2k+1$.
\end{lemma}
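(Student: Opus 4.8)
The plan is to exhibit an explicit weakly $H_k$-saturated sequence. Unravelling the definitions, to add an edge $uv$ to the current edge set $F$ we must produce a copy of $H_k$ that contains $uv$ and whose other $k-1$ edges all lie in $F$; equivalently, $F$ must contain a matching of size $k-1$ avoiding both $u$ and $v$, so that this matching together with $uv$ is a $k$-matching (a copy of $H_k$) in which $uv$ is the unique new edge. The weakly saturated condition $|X_i\setminus(F_0\cup\bigcup_{j<i}X_j)|=1$ is then automatic, as is properness. Since every permutation of $V(K_n)$ is an automorphism of $K_n$, all copies of $H_k$ are equivalent under relabelling, so I may take the starting matching to be $M_0=\{12,34,\dots,(2k-1)(2k)\}$, with vertices $2k+1,\dots,n$ initially isolated; the hypothesis $n\geq 2k+1$ guarantees at least one isolated vertex $v_0:=2k+1$.

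First I record the easy endgame. Suppose the current graph contains the complete graph $K(S)$ on $S=\{1,\dots,2k+1\}$. Then any edge $uv$ not yet present has at most one endpoint in $S$, so $S\setminus\{u,v\}$ retains at least $2k-1$ vertices and $K(S)$ induces on them a matching of size $k-1$ avoiding $u$ and $v$; hence $uv$ can be added, and this remains true after further additions. Consequently it suffices to (i) construct $K(S)$ starting from $M_0$, and then (ii) add all edges incident to the outer vertices $2k+2,\dots,n$ in arbitrary order.

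The core of the argument is step (i), which I carry out in two stages. In Stage 1 I add the edges $\{v_0,i\}$ for every $i\in\{1,\dots,2k\}$: to insert $\{v_0,i\}$ I use the $k-1$ edges of $M_0$ not meeting $i$, which form a matching avoiding both $i$ and $v_0$. In Stage 2 I add every remaining edge inside $\{1,\dots,2k\}$. Here, for an edge $ij$ whose $M_0$-partners are $i'$ and $j'$, the edges of $M_0$ avoiding $\{i,j\}$ number only $k-2$, one short of what is needed; I repair this deficit with the Stage-1 edge $\{v_0,i'\}$, which is disjoint from those $k-2$ edges and from $\{i,j\}$, producing the required matching of size $k-1$. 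After Stages 1 and 2 the graph on $S$ is complete, and step (ii) finishes the construction.

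The one genuinely delicate point is Stage 2: deleting two matched vertices destroys two of the $k$ edges of $M_0$, so $M_0$ on its own can never supply a $(k-1)$-matching avoiding two matched endpoints — it always falls exactly one edge short. The sole purpose of Stage 1 is to create, through the spare vertex $v_0$, the extra independent edge that bridges this gap, and this is precisely where the hypothesis $n\geq 2k+1$ (the existence of $v_0$) is used. The remaining verifications — that each $X_i$ is a copy of $H_k$, that exactly one edge of each $X_i$ is new, and that $M_0$ together with the union of all the $X_i$ equals $E(K_n)$ — are routine bookkeeping.
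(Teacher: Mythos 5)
Your proof is correct and follows essentially the same route as the paper's: both arguments first complete $K_{2k+1}$ on $V(H_k)$ plus one spare vertex in two stages (the star at the spare vertex, then the edges inside $V(H_k)$, where the spare-vertex edge supplies the one missing matching edge), and then extend to the remaining vertices of $K_n$ (the paper by induction on $n$, you by noting that once $K_{2k+1}$ is present every further edge can be added directly). Your explicit choice of the repairing edge $\{v_0,i'\}$ in Stage 2 makes precise a step the paper only asserts.
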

\begin{proof}
Let $H=\{e_1,e_2,\ldots,e_k\}$ be a copy of $H_k$ in $K_n$ for some $n\geq 2k+1$. We show that $K_n$ has a weakly saturated $H_k$-sequence starting from $H$ by induction on $n$. Choose a vertex $v\in V(K_n)\sm V(H)$.

Suppose $n=2k+1$. For each edge $f$ from $v$ to $H$ we can choose a $k$-matching $H^f$ containing $f$ and $k-1$ edges of $H$. Then for each edge $g$ of $(K_n-v)-E(H)$ we can choose a $k$-matching $H^g$ containing $g$ and $k-1$ edges of
$H\cup \bigcup_{f \sim v}H^f$. 
Concatenating $H$ with $H^f$ for $f\sim v$ and then $H^g$ for the remaining edges $g$ gives a
weakly saturated $H_k$-sequence which constructs $K_{2k+1}$ from $H$.

Now suppose $n>2k+1$. By induction, $K_n-v$ has a weakly saturated $H_k$-sequence $\cS$ starting from $H$. For each edge $f$ from $v$ to $H$ we can choose a $k$-matching $H^f$ containing $f$ and $k-1$ edges of $K_n-v$.  Concatenating $H$ with $H^f$  gives the required weakly saturated $H_k$-sequence for $K_n$.
\end{proof}

Combining Lemmas~\ref{lem:uniform} and \ref{lem:matching}, we immediately obtain:
\begin{theorem}
Let $H_k$ be the vertex-disjoint union of $k$ copies of $K_2$.
Then $\cU_{k-1}(K_n)$ is the unique maximal $H_k$-matroid on $K_n$ for all $n\geq 2k+1$, and $\val_{H_k}$ is its rank function.
\end{theorem}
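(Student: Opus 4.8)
The plan is to read the theorem off directly from Lemma~\ref{lem:uniform}, taking $\cX$ to be the $k$-uniform family $\{H_k\}_{K_n}$ of edge sets of copies of $H_k$ in $K_n$ and $E=E(K_n)$. First I would check that $\{H_k\}_{K_n}$ is genuinely $k$-uniform: every copy of $H_k$ is a matching of exactly $k$ edges, so each member of the family has size $k$. I would also note in passing that when $n\geq 2k+1$ every edge of $K_n$ extends to a $k$-matching (there are $n-2\geq 2k-1$ vertices left over to host the remaining $k-1$ disjoint edges), so indeed $E=\bigcup_{X\in\{H_k\}_{K_n}}X=E(K_n)$ and the ambient ground set of the uniform setup is exactly $E(K_n)$.

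The key input is Lemma~\ref{lem:matching}, which supplies a weakly $\{H_k\}_{K_n}$-saturated sequence constructing $E(K_n)$ from the edge set $X_0=E(H)$ of an arbitrary fixed copy $H$ of $H_k$ in $K_n$. Since $X_0$ is itself a member of $\{H_k\}_{K_n}$, the hypothesis of Lemma~\ref{lem:uniform}---namely that $E$ can be constructed by a weakly $\cX$-saturated sequence from some $X_0\in\cX$---holds verbatim. Applying Lemma~\ref{lem:uniform} then yields that $\cU_{k-1}(E(K_n))=\cU_{k-1}(K_n)$ is the unique maximal $\{H_k\}_{K_n}$-matroid on $E(K_n)$, i.e. the unique maximal $H_k$-matroid on $K_n$, and that its rank function is $\val_{\{H_k\}_{K_n}}=\val_{H_k}$.

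Since both Lemma~\ref{lem:uniform} and Lemma~\ref{lem:matching} are already established, there is no substantive obstacle remaining; the whole argument is simply the verification that the hypotheses of the former are precisely what the latter provides. The only point worth double-checking is the bookkeeping between the graph-theoretic statement (a weakly $H_k$-saturated sequence from a copy of $H_k$) and the set-system statement (a weakly $\{H_k\}_{K_n}$-saturated sequence from $X_0\in\cX$); these coincide by the definition of weak $H$-saturation given just before Lemma~\ref{lem:matching}, so the combination is immediate.
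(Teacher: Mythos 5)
Your proposal is correct and matches the paper exactly: the paper's proof is precisely the one-line observation that the theorem follows by combining Lemma~\ref{lem:uniform} with Lemma~\ref{lem:matching}. Your additional bookkeeping (checking $k$-uniformity and that every edge lies in a copy of $H_k$ when $n\geq 2k+1$) is a harmless elaboration of the same argument.
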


Let $P_k$ be the path with $k$ edges. It is straightforward to show that $K_n$ can be constructed by a  weakly saturated $P_k$-sequence starting from a particular copy of $P_k$ in $K_n$ whenever $n\geq k+1$. Lemma~\ref{lem:uniform} now gives:

\begin{theorem}\label{thm:path}
Let $P_k$ be the path of length $k$.
Then $\cU_{k-1}(K_n)$ is the unique maximal $P_k$-matroid on $K_n$ for all $n\geq k+1$, and $\val_{P_k}$ is its rank function.
\end{theorem}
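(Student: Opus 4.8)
The plan is to follow exactly the template used for matchings in Lemma~\ref{lem:matching}: reduce the whole statement to a weak-saturation fact and then prove that fact by induction on $n$. Since the family $\{P_k\}_{K_n}$ of edge sets of copies of $P_k$ is $k$-uniform, Lemma~\ref{lem:uniform} tells us that the full conclusion (that $\cU_{k-1}(K_n)$ is the unique maximal $P_k$-matroid on $K_n$ with rank function $\val_{P_k}$) will follow immediately once we establish that $E(K_n)$ can be constructed by a weakly $\{P_k\}_{K_n}$-saturated sequence starting from the edge set of a single copy of $P_k$, for every $n\geq k+1$. Thus the entire content of the theorem lies in this weak-saturation statement, the path analogue of Lemma~\ref{lem:matching}.

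First I would dispose of the inductive step $n>k+1$. Fix a copy $P$ of $P_k$ inside $K_{n-1}=K_n-v$ for some vertex $v$. By induction $K_{n-1}$ can be constructed from $P$ by a weakly saturated $P_k$-sequence, so it suffices to append the edges incident to $v$ one at a time. For each $u\in V(K_{n-1})$ I would add $\{v,u\}$ using the copy of $P_k$ given by the path $v,u,w_1,w_2,\dots,w_{k-1}$, where $w_1,\dots,w_{k-1}$ are any $k-1$ distinct vertices of $K_{n-1}\setminus\{u\}$; such vertices exist because $n-1\geq k$. Every edge of this path other than $\{v,u\}$ already lies in $K_{n-1}$, so the copy adds exactly one new edge, as required for a weakly saturated sequence. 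Concatenating these steps over all $u$ reduces everything to the base case $n=k+1$.

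The hard part --- though still elementary --- is the base case $n=k+1$, where a copy of $P_k$ is a spanning path and we must add all $\binom{k}{2}$ chords one by one, each as the unique new edge of a spanning path built from already-present edges. I would label the vertices $0,1,\dots,k$, start from the path $0-1-\cdots-k$, and add the chord $\{i,i+d\}$ of length $d\geq 2$ in increasing order of $d$, and for fixed $d$ in increasing order of $i$. To add $\{i,i+d\}$ I would use the spanning path
\[
0,\, 1,\, \dots,\, i-1,\; i+1,\, i+2,\, \dots,\, i+d-1,\; i,\; i+d,\, i+d+1,\, \dots,\, k,
\]
whose only non-path edges are the target chord $\{i,i+d\}$, the length-two chord $\{i-1,i+1\}$, and the length-$(d-1)$ chord $\{i,i+d-1\}$. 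The latter two are already present when $\{i,i+d\}$ is processed: $\{i,i+d-1\}$ has length $d-1<d$ and so was added in an earlier round, while $\{i-1,i+1\}$ has length $2$, hence was added in an earlier round when $d\geq 3$ and earlier in the current round (it has the smaller left endpoint $i-1$) when $d=2$. The only boundary case is $i=0$, where the prefix $0,\dots,i-1$ and the edge $\{i-1,i+1\}$ simply disappear, the path starts at vertex $1$, and only the chord $\{0,d-1\}$ of length $d-1$ together with path edges is used. Verifying that this family of spanning paths is well defined and exhausts every chord is the main, but routine, obstacle; once it is in place, Lemma~\ref{lem:uniform} completes the proof.
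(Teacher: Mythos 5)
Your proposal is correct and follows exactly the paper's route: reduce the theorem to the claim that $K_n$ can be built by a weakly saturated $P_k$-sequence from one copy of $P_k$ and then invoke Lemma~\ref{lem:uniform}. The paper dismisses that weak-saturation claim as ``straightforward to show''; your explicit induction (peeling off a vertex for $n>k+1$, and the ordered chord-insertion via spanning paths for the base case $n=k+1$) is a valid filling-in of that omitted detail, and I checked that each of your spanning paths uses only previously added chords and contributes exactly one new edge.
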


Sitharam and Vince \cite{SV} showed that $\cU_{K_{1,3}}(K_n)$ is the unique maximal $K_{1,3}$-matroid on $K_n$ for all $n\geq 4$. Their result can be deduced from Lemma \ref{lem:Xuniform} 
since $K_{1,3}$ is union-stable and $K_n$ can be constructed by a weakly $K_{1,3}$-saturated sequence starting from a copy of $K_3$. We may also deduce that $\val_{K_{1,3}}$ is the rank function of $\cU_{K_{1,3}}(K_n)$.

It is an open problem  to determine whether  there is a unique maximal $T_k$-matroid on $K_n$   for any fixed tree $T_k$ with  $k$ edges. 
The following result gives some information on this poset: it implies that the rank of every $T_k$-matroid on $K_n$ is bounded by a quadratic polynomial in $k$.
  
\begin{lemma}\label{lem:bound}
Suppose $H$ is a graph with $s$ vertices and minimum degree  $\delta$, and $\cM$ is a  $H$-matroid on $K_n$ with $n\geq s-1$. Then the rank of $\cM$ is at most $(\delta-1) (n-s+1)+{s-1\choose 2}$.
\end{lemma}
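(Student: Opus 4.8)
The plan is to apply Lemma~\ref{lem:upper1} with $F=E(K_n)$. Since $\rank\cM=r_\cM(E(K_n))$, it suffices to exhibit a single proper $\{H\}_{K_n}$-sequence $\cS$ with $\val(E(K_n),\cS)\le(\delta-1)(n-s+1)+\binom{s-1}{2}$. I would obtain $\cS$ as a weakly $\{H\}_{K_n}$-saturated sequence, exploiting the observation from Section~3 that if $E(K_n)$ can be constructed by a weakly $\{H\}_{K_n}$-saturated sequence $\cS$ from a set $F_0$, then $\val(E(K_n),\cS)=|F_0|$. Thus the entire problem reduces to choosing a starting set $F_0$ of the right size from which $K_n$ can be built up by adding edges one at a time, each new edge completing a fresh copy of $H$. (The case $n=s-1$ is degenerate, with $F_0=E(K_n)$ and the bound reading $\binom{s-1}{2}=\binom{n}{2}$, so I would assume $n\ge s$.)

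I would take $F_0$ to consist of the $\binom{s-1}{2}$ edges of a clique on a fixed set $A$ of $s-1$ vertices, together with, for each of the remaining $n-s+1$ vertices $b$, a set of $\delta-1$ edges joining $b$ to $A$ (possible since $\delta-1\le s-2$). Then $|F_0|=\binom{s-1}{2}+(\delta-1)(n-s+1)$, which is exactly the target value. It remains to order the missing edges into a weakly $\{H\}_{K_n}$-saturated sequence from $F_0$. I would process the vertices of $B=V(K_n)\setminus A$ one at a time in a fixed order; when processing $b$, the vertices already processed together with $A$ form a clique $W$ with $|W|\ge s-1$, and I complete $b$ into $W$ by adding its missing edges $bw$ (with $w\in W$) one by one.

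The crux is to show that each such edge $bw$ can be added as the unique new edge of a copy of $H$. Fix a vertex $h$ of $H$ of minimum degree $\delta$. At the moment we add $bw$, the vertex $b$ already has at least $\delta-1$ neighbours, all lying in $W$ (its original $F_0$-edges, plus any edges added earlier in this phase). I embed $H$ by sending $h\mapsto b$, sending the $\delta$ neighbours of $h$ to $w$ together with $\delta-1$ of the already-present neighbours of $b$, and sending the remaining $s-1-\delta$ vertices of $H$ to further vertices of $W$; this is possible precisely because $|W|\ge s-1$. Since $W$ is a clique and all images of the neighbours of $h$ are, after adding $bw$, neighbours of $b$, every edge of this copy of $H$ except $bw$ is already present, so $bw$ is its unique new edge. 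This produces the desired weakly $\{H\}_{K_n}$-saturated sequence $\cS$, and then $r_\cM(E(K_n))\le\val(E(K_n),\cS)=|F_0|$ gives the bound. I expect the main technical point to be the bookkeeping in this embedding step: one must verify that $b$ never drops below $\delta-1$ already-present neighbours inside $W$ — which holds because edges incident to $b$ are only ever added, never removed, and earlier phases do not touch $b$ — and that $W$ remains large enough throughout, both of which are guaranteed by the choice of $F_0$ and the processing order.
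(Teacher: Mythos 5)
Your proof is correct and rests on the same construction as the paper's: a clique on $s-1$ vertices together with $\delta-1$ edges from each remaining vertex into it, with each missing edge shown to complete a copy of $H$ in which the new vertex has degree exactly $\delta$. The only difference is packaging: the paper builds a base of $\cM$ incrementally and bounds its size via spanning/closure arguments, whereas you convert the same construction into a weakly $\{H\}_{K_n}$-saturated sequence and invoke Lemma~\ref{lem:upper1}; both yield the identical bound $|F_0|=(\delta-1)(n-s+1)+\binom{s-1}{2}$.
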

\begin{proof}
Let $V(K_n)=\{v_1,v_2,\dots, v_n\}$ and put $V_i=\{v_1, v_2,\dots, v_i\}$ and $E_i=E(K[V_i])$ for $1\leq i\leq n$.
Choose a base $B_{s-1}$ of $E(K(V_{s-1}))$ in $\cM$. 
Clearly $|B_{s-1}|\leq {s-1\choose 2}$.  

 We inductively construct a base $B_i$ of $E_i$ in $\cM$ from $B_{s-1}$, for $i=s,\dots, n$.
Suppose we have a base $B_i$ of $E_i$,
and let $B_{i+1}'=B_i\cup\{v_{i+1}v_1, \dots, v_{i+1}v_{\delta-1}\}$.
Then, for each $j$ with $s\leq j\leq i$,  
$K(V_i)+\{v_{i+1}v_1, \dots, v_{i+1}v_{\delta-1}, v_{i+1} v_j\}$ contains a graph isomorphic to $H$ in which
the degree of $v_{i+1}$ is equal to $\delta$.
Hence $B_{i+1}'$ spans $E_i$ in $\cM$.
Let $B_{i+1}$ be a base of $B_{i+1}'$ obtained by extending $B_i$. 
Then $B_{i+1}$ is obtained from $B_i$ by adding at most $\delta-1$ edges.
This implies that $B_n$  has size at most $(\delta-1) (n-s+1)+{s-1\choose 2}$.
The lemma now follows since $B_n$ is a base of $\cM$.
\end{proof}

We close this section with one more application of Lemma~\ref{lem:Xuniform}. Consider the graph   $G_5$ in Figure~\ref{fig:K23}. 
It is straightforward to check that $G_5$ is union-stable and that $K_n$ can be constructed by  a weakly saturated $G_5$-sequence starting from $K_{2,3}$. Hence $\cU_{G_5}(K_n)$ is the unique maximal $G_5$-matroid on $K_n$ for all $n\geq 5$, and $\val_{G_5}$ is its rank function.

\section{Matroids Induced by Submodular Functions}
\label{sec:induced}
%

In this section we use weakly saturated sequences and a matroid construction due to Edmonds 
to give more examples of unique maximal matroids. 

\begin{theorem}[Edmonds \cite{E}]\label{thm:edmonds}
Let  $E$ be a finite set and $f:2^E\rightarrow \mathbb{Z}$ be a non-decreasing, submodular function. Put
\[
{\cal I}_f:=\{F\subseteq E: |I|\leq f(I) \text{ for any $I\subseteq F$ with $I\neq \emptyset$} \}.
\]
Then $\cM_f:=(E, {\cal I}_f)$ is a matroid with rank function $\hat f:2^E\rightarrow \mathbb{Z}$ given by
\begin{equation}\label{eq:dilworth}
\hat{f}(F):=\min\left\{|F_0|+\sum_{i=1}^k f(F_i) : \text{$F_0\subseteq F$ and  $\{F_1,\dots, F_k\}$ is a partition  of $F\sm F_0$}\right\}.
\end{equation}
\end{theorem}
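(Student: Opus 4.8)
The plan is to first show that $\hat f$ is the rank function of a matroid on $E$ by verifying the standard rank axioms, and then to check that the independent sets of this matroid are exactly the members of $\mathcal{I}_f$; together these give that $\mathcal{M}_f=(E,\mathcal{I}_f)$ is a matroid with rank function $\hat f$.

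For the rank axioms I would first dispose of the easy conditions. The bound $\hat f(F)\le |F|$ is witnessed by the structure $F_0=F$, $k=0$, and $\hat f(\emptyset)=0$ since the empty structure is the only one available; the lower bound $\hat f\ge 0$ uses non-negativity of $f$ (as in Edmonds' original hypotheses). Monotonicity is proved by restriction: given $F'\subseteq F$ and an optimal structure $(F_0,\{F_1,\ldots,F_k\})$ for $F$, the intersected structure $(F_0\cap F',\{F_i\cap F' : F_i\cap F'\ne\emptyset\})$ is feasible for $F'$ and, since $f$ is non-decreasing and non-negative, has value at most $\hat f(F)$, giving $\hat f(F')\le\hat f(F)$. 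Unit increase then follows from submodularity together with $\hat f(\{e\})\le 1$.

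The crux is submodularity: $\hat f(A)+\hat f(B)\ge \hat f(A\cap B)+\hat f(A\cup B)$. Here I would take optimal structures $(A_0,\mathcal{P})$ for $A$ and $(B_0,\mathcal{Q})$ for $B$ and build feasible structures for $A\cap B$ and $A\cup B$ of total value at most $\hat f(A)+\hat f(B)$, repeatedly merging a pair of crossing parts $P\in\mathcal{P}$, $Q\in\mathcal{Q}$ via the submodular inequality $f(P)+f(Q)\ge f(P\cup Q)+f(P\cap Q)$ and inducting on the number of crossings; it is cleanest to reduce first to the local form $\hat f(A+e)+\hat f(A+e')\ge \hat f(A)+\hat f(A+e+e')$ for $e\ne e'$, which for set functions is equivalent to full submodularity. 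This uncrossing bookkeeping, ensuring that the merged families remain genuine partitions of the correct ground sets while the cost stays controlled, is where I expect the real work to lie and is the main obstacle.

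Granting submodularity, $\hat f$ is a matroid rank function; let $\mathcal{M}'$ be the resulting matroid, so $F$ is independent in $\mathcal{M}'$ iff $\hat f(F)=|F|$. It then remains to show $\{F:\hat f(F)=|F|\}=\mathcal{I}_f$, which is a direct consequence of the formula and needs no further uncrossing. If $F\in\mathcal{I}_f$ then for every structure $|F_0|+\sum_i f(F_i)\ge |F_0|+\sum_i |F_i|=|F|$ since $f(F_i)\ge |F_i|$ for nonempty $F_i\subseteq F$, whence $\hat f(F)=|F|$. Conversely, if $F\notin\mathcal{I}_f$, pick a nonempty $I\subseteq F$ with $f(I)<|I|$; the structure $F_0=F\setminus I$, $F_1=I$ has value $|F\setminus I|+f(I)<|F|$, so $\hat f(F)<|F|$ and $F$ is dependent. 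Hence the independent sets of $\mathcal{M}'$ are exactly $\mathcal{I}_f$, completing the proof.
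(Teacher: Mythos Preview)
The paper does not give its own proof of this theorem: it is quoted as a known result of Edmonds~\cite{E} and used as a black box. There is therefore nothing in the paper to compare your argument against.

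On its own merits, your outline is the standard route and is sound. Two small remarks. First, you correctly flag that $\hat f\ge 0$ needs non-negativity of $f$; the paper's statement omits this hypothesis, but without it the formula~\eqref{eq:dilworth} can return negative values (e.g.\ if $f(\{e\})<0$), so your appeal to Edmonds' original assumption is appropriate rather than optional. Second, your reduction of submodularity to the local inequality $\hat f(A+e)+\hat f(A+e')\ge \hat f(A)+\hat f(A+e+e')$ is legitimate, and the uncrossing you describe is exactly what is needed; the bookkeeping is genuinely where the work is, but there is no hidden obstacle beyond careful case analysis. The final identification of $\{F:\hat f(F)=|F|\}$ with $\mathcal{I}_f$ is clean and correct.
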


We refer to the matroid $\cM_f$ given by Edmond's theorem as the {\em matroid induced by $f$}. Given a  set $F\subseteq E$ it is straightforward to check that:
\begin{equation}
\label{eq:induced1}
\begin{split}
&\mbox{$F$  is a circuit in $\cM_f$ if and only if $0\neq |F|=f(F)+1$ and}\\
&\mbox{$|F'|\leq f(F')$  for all $F'\subseteq F$ with $\emptyset \neq F'\neq F$;}
\end{split}
\end{equation}
\begin{equation}
\label{eq:induced2}
\mbox{$F$  is a flat in $\cM_f$ if and only if $f(F+e)=f(F)+1$   for all $e\in E\sm F$.}
\end{equation}

The function $f_{a,b}:2^{E(K_n)}\to\ \mathbb{Z}$ by $f_{a,b}(F)=a|V(F)|-b$ is submodular and non-decreasing for all $a,b\in  \mathbb{Z}$ with $a\geq 0$ and hence induces a matroid $\cM_{f_{a,b}}(K_n)$ on $E(K_n)$. These matroids are known as {\em count matroids}.  It is well known that the cycle matroid of $K_n$ is the count matroid $\cM_{f_{1,1}}(K_n)$.
Another well-known example is when $a=2$ and $b=3$, which gives the rigidity matroid of generic  frameworks in $\R^2$. 
Sitharam and Vince~\cite{SV}  showed that 
$\cM_{f_{1,1}}(K_n)$ and $\cM_{f_{2,3}}(K_n)$ are the unique maximal $K_3$-matroid and $K_4$-matroid on $K_n$, respectively. Slightly weaker versions of these results were previously obtained by Graver \cite{G}. 

We will show that the maximality of both these matroids, as well as that of  several other count matroids, follow easily from   Lemma~\ref{lem:conflat} and Theorem~\ref{thm:edmonds}. We need the following observation on the connected flats of count matroids which follows immediately from (\ref{eq:induced1}) and (\ref{eq:induced2}).

\begin{lemma}\label{lem:countflat} 
Suppose $a,b\in  \mathbb{Z}$ with $a\geq 0$ and $F\subseteq E(G)$ is a connected flat in  $\cM_{f_{a,b}}(G)$. Then $G[F]$ is the subgraph of $G$ induced by $V(F)$ and $|F|\geq  a|V(F)|-b+1$. 
\end{lemma}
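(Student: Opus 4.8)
The plan is to prove the two assertions separately, drawing the first from the flat characterisation (\ref{eq:induced2}) and the second from the circuit characterisation (\ref{eq:induced1}) together with the submodularity of $f_{a,b}$ and the connectivity of $F$.

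For the first assertion I would argue by contradiction. Since $G[F]$ has vertex set $V(F)$ and edge set $F$, to show it is the subgraph induced by $V(F)$ it suffices to show that $F$ already contains every edge of $G$ joining two vertices of $V(F)$. Suppose instead that some $e=uv\in E(G)\sm F$ has $u,v\in V(F)$. Then $V(F+e)=V(F)$, so $f_{a,b}(F+e)=a|V(F)|-b=f_{a,b}(F)$, contradicting the flat condition (\ref{eq:induced2}), which requires $f_{a,b}(F+e)=f_{a,b}(F)+1$. Hence no such $e$ exists and $G[F]$ is induced.

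For the second assertion I would prove the equivalent inequality $f_{a,b}(F)\le |F|-1$ by building $F$ up from a single circuit. As $F$ is a connected flat with at least two elements, it is dependent and hence contains a circuit $C$; by (\ref{eq:induced1}), $|C|=f_{a,b}(C)+1$, so the quantity $\phi(S):=|S|-f_{a,b}(S)$ equals $1$ at $S=C$. I would then grow $S$ towards $F$: while $S\subsetneq F$, choose $e\in F\sm S$ and, using the connectivity of $F$ (a circuit through $e$ and any fixed element of $S$ lying inside $F$), obtain a circuit $C_e$ with $e\in C_e\subseteq F$ and $A:=C_e\cap S\neq\emptyset$. Writing $D:=C_e\sm S\ni e$, the key step combines submodularity of $f_{a,b}$, namely $f_{a,b}(S\cup C_e)\le f_{a,b}(S)+f_{a,b}(C_e)-f_{a,b}(A)$, with the two facts $f_{a,b}(C_e)=|C_e|-1$ and $|A|\le f_{a,b}(A)$ provided by (\ref{eq:induced1}) (the latter because $A$ is a proper nonempty subset of the circuit $C_e$). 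These yield $f_{a,b}(S\cup C_e)\le f_{a,b}(S)+|D|-1$, whence $\phi(S\cup C_e)\ge \phi(S)+1$. Since each step strictly enlarges $S$ while keeping $S\subseteq F$, the process terminates with $S=F$, giving $\phi(F)\ge 1$, i.e.\ $|F|\ge f_{a,b}(F)+1=a|V(F)|-b+1$.

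I expect the second assertion to be the main obstacle: the target $|F|\ge f_{a,b}(F)+1$ is a global count statement, whereas (\ref{eq:induced1}) only controls the count on circuits and their proper subsets, so the connectivity of $F$ must be used to stitch these local certificates into a global one. The crucial point is that connectivity guarantees, at every stage, a circuit meeting the already-built part $S$ in a proper nonempty subset, which is precisely what lets the submodular inequality be applied with a strict gain in $\phi$. The degenerate cases $|F|\le 1$ do not arise for the connected flats relevant to our applications; for instance, once $G$ contains two disjoint edges no single edge is a flat, since then $f_{a,b}$ increases by $2a\neq 1$ under such an extension.
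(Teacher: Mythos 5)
Your proof is correct in substance, but there is nothing in the paper to compare it with: the authors assert that the lemma ``follows immediately from (\ref{eq:induced1}) and (\ref{eq:induced2})'' and give no argument. What you have written is a legitimate filling-in, and your treatment of the inequality $|F|\ge a|V(F)|-b+1$ by stitching circuits together with submodularity is a genuinely different route from the one-line derivation the authors presumably intend. That shorter route runs through the rank formula (\ref{eq:dilworth}): for a \emph{connected dependent} set $F$, any minimising partition in (\ref{eq:dilworth}) with two or more nonempty blocks would give $r(F)\ge r(F')+r(F'')$ for some bipartition, contradicting connectivity, so the minimiser is the trivial partition and $r_{\cM_{f_{a,b}}}(F)=f_{a,b}(F)$; dependence then gives $f_{a,b}(F)=r(F)\le|F|-1$, and for an edge $e$ with both ends in $V(F)$ we get $r(F+e)\le f_{a,b}(F+e)=f_{a,b}(F)=r(F)$, which yields the induced-subgraph claim directly. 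Your argument avoids any appeal to (\ref{eq:dilworth}) at the cost of length, and your key step (a circuit meeting the current set $S$ in a proper nonempty subset, so that submodularity gains $1$ in $\phi=|\cdot|-f_{a,b}$) is carried out correctly.

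Two cautions. First, (\ref{eq:induced2}) is not literally true for arbitrary flats: a single edge is a flat of $\cM_{f_{2,3}}(K_n)$ although adding a disjoint edge increases $f_{2,3}$ by $4$, and two vertex-disjoint triangles form a flat of $\cM_{f_{1,1}}(K_n)$ violating the displayed condition. Consequently your closing remark that ``no single edge is a flat'' is false. Your first paragraph survives because for connected dependent flats the implication you actually use does hold, via $r(F)=f_{a,b}(F)$ as above; but you should not lean on (\ref{eq:induced2}) as an unrestricted equivalence. Second, the inequality $|F|\ge a|V(F)|-b+1$ genuinely requires $F$ to contain a circuit, so one-element ``connected'' flats must be excluded; this is an imprecision in the paper's statement rather than a gap in your proof, but you should state the hypothesis that $F$ is dependent (equivalently $|F|\ge 2$ for a connected set) explicitly rather than dismissing the degenerate case with an incorrect claim.
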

 
Let $K_n^-$ be the graph obtained from $K_n$ by removing an edge.
\begin{theorem}\label{thm:K3}
(a) $\cM_{f_{1,1}}(K_n)$  is the unique maximal $K_3$-matroid on $K_n$
 and its rank function is $\val_{K_3}$. \\
(b) $\cM_{f_{2,3}}(K_n)$  is the unique maximal $K_4$-matroid on $K_n$ and its rank function is $\val_{K_4}$.
\\
(c) $\cM_{f_{1,0}}(K_n)$  is the unique maximal $K_4^-$-matroid on $K_n$ and its rank function is $\val_{K_4^-}$.
  \\
(d)
 $\cM_{f_{2,2}}(K_n)$  is the unique maximal $K_5^-$-matroid on $K_n$ and its rank function is $\val_{K_5^-}$.
\\ 
(e) $\cM_{f_{3,5}}(K_n)$  is the unique maximal $K_6^-$-matroid on $K_n$ and its rank function is $\val_{K_6^-}$.
 \end{theorem}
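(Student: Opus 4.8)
The plan is to apply Lemma~\ref{lem:conflat} to $\cM := \cM_{f_{a,b}}(K_n)$ in each of the five cases, taking $(a,b)$ and $H$ as specified. First I would verify the two standing hypotheses of that lemma. That $\cM$ is loopless is immediate: a single edge $e$ satisfies $f_{a,b}(\{e\}) = 2a-b$, which equals $1,1,2,2,1$ in cases (a)--(e), so $\{e\}$ is independent by Theorem~\ref{thm:edmonds}. That each copy of $H$ is a circuit of $\cM$ (so that $\cM$ is an $H$-matroid) follows from (\ref{eq:induced1}): in every case $|E(H)| = a|V(H)|-b+1 = f_{a,b}(E(H))+1$, and a finite check confirms $|F'|\le f_{a,b}(F')$ for every proper nonempty $F'\subsetneq E(H)$, the tightest subsets being the complete subgraphs of $H$.

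It then remains to show, for each connected flat $F$ of $\cM$, that there is a proper $\{H\}_{K_n}$-sequence $\cS$ with $r_\cM(F)=\val(F,\cS)$. By Lemma~\ref{lem:countflat}, a connected flat is the edge set of an induced complete subgraph, so $F = E(K(V(F)))$ with $m := |V(F)|$, and $\binom m2 = |F| \ge am-b+1$; a short calculation shows this forces $m \ge |V(H)|$. Since $F$ is a connected flat, Theorem~\ref{thm:edmonds} together with connectivity gives $r_\cM(F) = f_{a,b}(F) = am-b$. The crucial reduction is to weak saturation: if $K(V(F))$ can be constructed by a weakly $\{H\}_{K_n}$-saturated sequence $\cS$ from a set $F_0\subseteq F$ with $|F_0| = am-b$, then every member of $\cS$ lies inside $F$, whence $\val(F,\cS) = |F_0| = am-b = r_\cM(F)$, exactly as required.

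The heart of the proof is therefore the construction of these weakly saturated sequences, which I would carry out uniformly. Fix an ordering $v_1,\dots,v_m$ of $V(F)$ and let the apex be $A = \{v_1,\dots,v_a\}$. I take $F_0$ to consist of all edges within $A$, all edges from $A$ to $V(F)\sm A$, and (only in the $K_t^-$ cases) one further edge inside $V(F)\sm A$; a direct count gives $|F_0| = am-b$, so that the remaining edges of $F$ all lie inside $V(F)\sm A$. When $H = K_t$ (cases (a),(b), $a = t-2$), each such edge $v_iv_j$ is introduced as the unique new edge of a copy of $H$ supported on $A\cup\{v_i,v_j\}$, whose other $\binom t2 - 1$ edges all belong to $F_0$; hence these copies may be applied in any order. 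When $H = K_t^-$ (cases (c),(d),(e), $a = t-3$) I argue by induction on $m$: assuming $K(\{v_1,\dots,v_{m-1}\})$ has already been built, I add the $a$ base edges from $v_m$ to $A$ and then introduce each remaining edge $v_mv_j$ ($a<j<m$) as the unique new edge of a copy of $H$ on $A\cup\{v_m,v_j,w\}$, with helper vertex $w\in\{v_{a+1},\dots,v_{m-1}\}\sm\{v_j\}$ and the missing edge of $K_t^-$ placed at $v_mw$. The base case $m=|V(H)|$ is $K(V(F))$ itself, built from a copy of $H$ with one ($K_t$) or two ($K_t^-$) edges deleted and then reinserted one at a time.

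The main obstacle is the bookkeeping in this saturation step: one must check that in every case the designated copy of $H$ contains exactly one edge not already present (so the sequence is genuinely weakly saturated), and that the helper $w$ exists. This is precisely why the $K_t^-$ cases need $K(\{v_1,\dots,v_{m-1}\})$ to be complete before $v_m$'s edges are added, since completeness supplies the edge $v_jw$ required by the copy; note that whether $v_mw$ happens to be present is irrelevant, as $v_mw$ is not an edge of the copy. Verifying the identity $|F_0| = am-b$ and that $F_0$ spans $F$ closes the argument. Once all connected flats are handled, Lemma~\ref{lem:conflat} yields simultaneously that $\cM_{f_{a,b}}(K_n)$ is the unique maximal $H$-matroid on $K_n$ and that its rank function is $\val_H$, establishing (a)--(e).
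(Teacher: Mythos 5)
Your proof is correct and follows essentially the same route as the paper: verify the hypotheses of Lemma~\ref{lem:conflat} via Lemma~\ref{lem:countflat}, then realise the rank of each connected flat (a complete subgraph $K_m$) by a weakly saturated $H$-sequence from a spanning subgraph with $am-b$ edges. The only difference is that for the $K_t^-$ cases you construct these sequences explicitly (apex set plus one extra edge, with the helper-vertex induction), whereas the paper simply cites Pikhurko's weak-saturation result; your counts and constructions check out, modulo the slightly garbled description of the base case (your $F_0$ restricted to $|V(H)|$ vertices is $K_t$ minus one or two edges, not $H$ minus that many).
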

 \begin{proof} In each case   
 $\cM_{f_{a,b}}(K_n)$ is loopless and is an $X$-matroid on $K_n$ for $X=K_3$ , $K_4$, $K_4^-$, $K_5^-$, $K_6^-$, respectively, by (\ref{eq:induced1}).
Lemmas \ref{lem:conflat} and \ref{lem:countflat}  will now imply that $\cM_{f_{a,b}}(K_n)$  is the unique maximal $X$-matroid on $K_n$ once we have shown that,
for every $K_m\subseteq K_n$ with $|E(K_m)|>am-b$, there is a proper $X$-sequence $\cS$ with 
$r_{\cM_{f_{a,b}}}(K_m)={\rm val}(K_m,\cS)$. We will do this by finding   a weakly saturated $X$-sequence which constructs $K_m$ from a subgraph $G\subset K_m$
with  $|E(G)|=am-b$. Let $V(K_m)=\{v_1,v_2,\ldots,v_m\}$.

In cases (a) and (b) we can use the well known fact that, for $m\geq d+1$, $K_m$ can be constructed by a weakly saturated $K_{d+2}$-sequence starting from the spanning subgraph $G$ with  $E(G)=\{v_iv_j: 1\leq i<j\leq d\}\cup \{v_iv_j: 1\leq i\leq d, d+1\leq j\leq m\}$, then taking $d=3,4$ for  cases (a), (b), respectively.
  
In cases (c), (d) and (e) we can use a result of 
Pikhurko~\cite{P2} that, for $d\geq m+1$, 
$K_m$ has a weakly saturated $K_d^-$-sequence starting from 
a spanning subgraph with $(d-3)m-{d-2\choose 2}+1$ edges, and then taking $d=4,5,6$ for cases (c), (d), (e), respectively.
%
%
%
%
%
 \end{proof}

Lemma~\ref{lem:conflat} can also be used to extend Theorem~\ref{thm:K3} to matroids on non-complete graphs. For example, if $G$ is a 
chordal graph, 
then every connected flat of $\cM_{f_{1,1}}(G)$ is a 2-connected chordal graph and we can use  Lemma~\ref{lem:conflat} and an appropriate weakly saturated $K_3$-sequence to deduce that $\cM_{f_{1,1}}(G)$ is  the unique maximal $K_3$-matroid on $G$ and $\val_{K_3}$ is its rank function.

Our next result gives another example of uniqueness for matroids on non-complete graphs.

\begin{theorem}\label{thm:K23Kmn} 
The matroid $\cM_{f_{1,0}}(K_{m,n})$ is the unique maximal $K_{2,3}$-matroid on $K_{m,n}$ and $\val_{K_{2,3}}$ is its rank function.
\end{theorem}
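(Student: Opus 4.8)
The plan is to follow the strategy used for Theorem~\ref{thm:K3}, replacing the complete-graph weak saturation results by a bipartite analogue. Write $A\cup B$ for the vertex classes of $K_{m,n}$, put $f=f_{1,0}$ so that $f(F)=|V(F)|$, and note that $\cM_{f_{1,0}}(K_{m,n})$ is the pseudoforest (bicircular) matroid. First I would check the hypotheses of Lemma~\ref{lem:conflat}. The matroid is loopless, since a single edge $e$ has $|\{e\}|=1\le 2=f(\{e\})$, and by (\ref{eq:induced1}) each copy of $K_{2,3}$ is a circuit: it has $6=f+1$ edges on its $5$ vertices, while every proper subgraph $F'$ still satisfies $|F'|\le |V(F')|$. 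Hence $\cM_{f_{1,0}}(K_{m,n})$ is a loopless $K_{2,3}$-matroid, and by Lemmas~\ref{lem:conflat} and \ref{lem:countflat} it suffices to produce, for every connected flat $F$, a proper $K_{2,3}$-sequence $\cS$ with $r_{\cM_{f_{1,0}}}(F)=\val(F,\cS)$.

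By Lemma~\ref{lem:countflat}, every connected flat $F$ is the edge set of the subgraph of $K_{m,n}$ induced by $V(F)$, hence a complete bipartite graph $K_{p,q}$ with $p$ vertices in $A$ and $q$ in $B$, satisfying $|F|=pq\ge p+q+1$, i.e. $(p-1)(q-1)\ge 2$. Such an $F$ contains a cycle and spans $p+q$ vertices, so $r_{\cM_{f_{1,0}}}(F)=p+q$. The key reduction, exactly as before, is the observation preceding Lemma~\ref{lem:uniform}: if $K_{p,q}$ can be constructed by a weakly $K_{2,3}$-saturated sequence $\cS$ from a subgraph $F_0$ with $|E(F_0)|=p+q$, then $\cS$ is a proper $K_{2,3}$-sequence with $\val(F,\cS)=|E(F_0)|=p+q=r_{\cM_{f_{1,0}}}(F)$, as required.

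So the heart of the matter is a weak-saturation statement: for all $p,q\ge 2$ with $(p-1)(q-1)\ge 2$, the graph $K_{p,q}$ can be built from a $(p+q)$-edge seed by successively adding edges, each completing a new copy of $K_{2,3}$. Relabelling if necessary I would assume $q\ge 3$. I would take $F_0$ to be the double star consisting of all edges at $a_1$ and all edges at $b_1$, together with the single extra edge $a_2b_2$; this has $(p+q-1)+1=p+q$ edges. Then I would fill in the remaining edges in two phases. In the first phase, for $j=3,\dots,q$ I add $a_2b_j$ using the copy of $K_{2,3}$ on $\{a_1,a_2\}$ and $\{b_1,b_2,b_j\}$, whose other five edges $a_1b_1,a_1b_2,a_1b_j,a_2b_1,a_2b_2$ are already present; after this both $a_1$ and $a_2$ are joined to all of $B$. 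In the second phase, for each $i=3,\dots,p$ and each $j=2,\dots,q$ I add $a_ib_j$ using the copy of $K_{2,3}$ with classes $\{b_1,b_j\}$ and $\{a_1,a_2,a_i\}$, whose other five edges $a_1b_1,a_2b_1,a_ib_1,a_1b_j,a_2b_j$ are present by the seed and the first phase. Each step introduces exactly one new edge, so this is a weakly $K_{2,3}$-saturated sequence, and it terminates with the complete graph $K_{p,q}$.

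This would complete the proof via Lemma~\ref{lem:conflat}, yielding both that $\cM_{f_{1,0}}(K_{m,n})$ is the unique maximal $K_{2,3}$-matroid and that $\val_{K_{2,3}}$ is its rank function. I expect the only real obstacle to be the weak-saturation construction above: one must track which of the two vertex classes of each $K_{2,3}$ copy plays the role of the $2$-side, and the two-phase scheme is designed precisely so that the initial double star supplies enough already-saturated vertices ($a_1,a_2$ and $b_1$) to make every remaining edge the last edge of some $K_{2,3}$. The degenerate connected flats whose rank equals their size (for instance a single edge) are handled trivially by the empty $K_{2,3}$-sequence.
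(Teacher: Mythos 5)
Your proposal is correct and follows essentially the same route as the paper: identify $\cM_{f_{1,0}}(K_{m,n})$ as a loopless $K_{2,3}$-matroid via Edmonds' construction, note that every connected flat is an induced $K_{p,q}$ with $pq\geq p+q+1$, and apply Lemma~\ref{lem:conflat} together with a weakly $K_{2,3}$-saturated sequence from a $(p+q)$-edge seed. Your seed (double star plus the edge $a_2b_2$) is exactly the paper's choice, and your two-phase filling-in just makes explicit the step the paper labels ``follows easily.''
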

\begin{proof}
By (\ref{eq:induced1}) and (\ref{eq:induced2}), each copy of $K_{2,3}$ is a circuit in 	$\cM_{f_{1,0}}(K_{m,n})$ and each connected flat is a copy of $K_{s,t}$ for some $s\geq 2, t\geq 3$.
By the same argument 
as in the proof of Theorem~\ref{thm:K3}, it will suffice to show that,
for any $K_{s,t}$ with $s\geq 2, t\geq 3$, 
 there is a weakly saturated $K_{2,3}$-sequence which constructs $K_{s,t}$ from a subgraph $G\subset K_{s,t}$ with $|E(G)|=s+t$. This follows easily
by taking $V(G)=\{u_1,u_2,\ldots,u_s\}\cup \{w_1,w_2,\ldots,w_t\}$ and $E(G)=\{u_2w_2\}\cup\{u_1w_i: 1\leq i\leq t\}\cup \{u_iw_1: 2\leq i\leq s\}$. 
\end{proof}

In contrast to this result, we will see in Section 5 that there are two distinct maximal $K_{2,3}$-matroids on $K_n$.

The {\em even cycle matroid} is the matroid $\cM$ on $E(K_n)$, 
 in which a set $F$ is independent if and only if each connected component of the induced subgraph $K_n[F]$ contains at most one cycle, and this cycle is odd if it exists. The rank function of  $\cM$ is given by $r_\cM(F)=|V(F)|-\beta(F)$, where $\beta(F)$ denotes the number of bipartite connected components in the graph $K_n[F]$. 
We can use this fact to define a modified version of count matroids. 

For
$a,b,c\in  \mathbb{Z}$, define $g_{a,b,c}:2^{E(K_n)}\to \mathbb{Z}$ by $g_{a,b,c}(F)=a|V(F)|-b\beta(F)-c$. Then $g_{a,b,c}$ is submodular and non-decreasing for all $a,b\in  \mathbb{Z}$ with $a\geq b\geq 0$ since the functions $F\mapsto |V(F)|$ and $F\mapsto |V(F)|-\beta(F)$ are both submodular and non-decreasing.
Hence $g_{a,b,c}$ induces a matroid $\cM_{g_{a,b,c}}(K_n)$ on $E(K_n)$ whenever $a\geq b\geq 0$. 
We will give  examples of families $\cH$ for which  $\cM_{g_{a,b,c}}(K_n)$ is the unique maximal $\cH$-matroid on $K_n$. We need the following observation on the connected flats of $\cM_{g_{a,b,c}}(K_n)$ which follows immediately from (\ref{eq:induced1}) and (\ref{eq:induced2}).

\begin{lemma}\label{lem:countflatbip} 
Suppose $a,b,c\in  \mathbb{Z}$ with $a\geq b\geq  0$, $c\geq 0$,  and $F\subseteq E(K_n)$ is a connected flat in  $\cM_{g_{a,b,c}}(K_n)$. Then $K_n[F]$ is either a complete graph with $|F|\geq a|V(F)|-c+1$ or a complete bipartite graph with $|F|\geq a|V(F)|-b-c+1$. 
\end{lemma}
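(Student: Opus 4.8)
The plan is to prove Lemma~\ref{lem:countflatbip} by unwinding the two characterizations \eqref{eq:induced1} and \eqref{eq:induced2} for the specific submodular function $g_{a,b,c}$. The statement has two halves: first, that a connected flat $F$ induces either a complete graph or a complete bipartite graph on $V(F)$; second, the corresponding lower bound on $|F|$. I would treat these in that order, since the edge-count bound is the easy consequence of connectedness once the shape of $K_n[F]$ is pinned down.

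For the structural half, I would argue that $F$ is a flat, so by \eqref{eq:induced2} we have $g_{a,b,c}(F+e)=g_{a,b,c}(F)+1$ for every $e\in E(K_n)\sm F$. The key is to compute how $g_{a,b,c}$ changes when we add a single edge $e=xy$. There are several cases according to whether $x,y\in V(F)$ and, if so, whether adding $e$ merges components or creates/destroys bipartiteness. First I would observe that if some $e=xy$ had both endpoints already in $V(F)$ (i.e. $e$ is a "chord" of the induced subgraph) then adding it does not change $|V(F)|$, and can only decrease $\beta$ (by destroying bipartiteness of a component or leaving it unchanged), so $g_{a,b,c}(F+e)-g_{a,b,c}(F)=-b\,(\beta(F+e)-\beta(F))\in\{0,b\}$, which can never equal $1$ unless the endpoints lie in distinct components — but $F$ being connected as a matroid element forces $K_n[F]$ to be connected as a graph, ruling that out. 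Hence $K_n[F]$ has no non-edges among $V(F)$ that fail the flat condition, which I would push to conclude that $K_n[F]$ is either complete or complete bipartite: the only way every internal non-edge $e$ can satisfy $g_{a,b,c}(F+e)=g_{a,b,c}(F)+1$ is if adding $e$ leaves $\beta$ invariant and respects $a|V(F)|$, and a careful case analysis of how $\beta$ behaves (an internal edge within one part of a bipartite component keeps it bipartite; any other internal edge destroys bipartiteness) shows the induced graph must be one of the two claimed types.

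For the numerical half, I would invoke that $F$ is \emph{connected} in the matroid, so by the subadditivity criterion recalled before Lemma~\ref{lem:conflat}, $r_{\cM_{g_{a,b,c}}}(F)<r(F')+r(F'')$ for every partition; combined with \eqref{eq:induced1}, connectedness forces $|F|$ to exceed the rank bound supplied by $g_{a,b,c}$ on $V(F)$. Concretely, a connected flat satisfies $|F|\geq g_{a,b,c}(F)+1$, and substituting $\beta(F)=0$ for the complete case and $\beta(F)=1$ for the complete bipartite case gives exactly $|F|\geq a|V(F)|-c+1$ and $|F|\geq a|V(F)|-b-c+1$ respectively. This mirrors precisely the reasoning in Lemma~\ref{lem:countflat}, so I would simply cite that the argument there carries over verbatim with $\beta$ tracked as an extra term.

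The main obstacle is the case analysis in the structural half: verifying that the flat condition on \emph{internal} non-edges leaves \emph{only} the complete and complete-bipartite shapes. The subtlety is that $\beta$ is a global quantity (counting bipartite components), so one must be careful that adding an edge inside a bipartite component along an edge that crosses the bipartition keeps $\beta$ unchanged while adding one that stays within a part changes $\beta$ — and that the interaction between the $a|V(F)|$ term and the $-b\beta$ term does not conspire to satisfy $g_{a,b,c}(F+e)=g_{a,b,c}(F)+1$ in some unexpected configuration. I would handle this by noting $K_n[F]$ is connected, so $\beta(F)\in\{0,1\}$, which drastically simplifies the bookkeeping and is what makes the clean dichotomy possible.
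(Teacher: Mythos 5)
Your overall architecture---derive the complete/complete-bipartite dichotomy by applying the flat condition \eqref{eq:induced2} to the internal non-edges of $K_n[F]$, then get the edge bound from matroid-connectedness---is exactly the route the paper intends (its own ``proof'' is the single remark that the lemma follows from \eqref{eq:induced1} and \eqref{eq:induced2}). However, there is one genuine gap: you assert that ``$F$ being connected as a matroid element forces $K_n[F]$ to be connected as a graph'' and build everything on it (in particular $\beta(F)\in\{0,1\}$), but you never justify this, and it is precisely here that the hypothesis $c\geq 0$---which your argument never uses---is required. The implication is false for $c\leq -1$: as the paper notes immediately after the lemma, the disjoint union of two copies of $C_4$ is both a circuit and a connected flat of $\cM_{g_{1,1,-1}}(K_n)$, and it is neither complete nor complete bipartite. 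The missing step is: if a circuit $C$ induced a graph with components $C_1,\dots,C_m$, $m\geq 2$, then $\sum_i g_{a,b,c}(C_i)=g_{a,b,c}(C)-(m-1)c\leq g_{a,b,c}(C)<|C|$, so some proper nonempty $C_i$ would be dependent by \eqref{eq:induced1}, a contradiction; hence circuits are graph-connected when $c\geq 0$, and a connected flat, being a union of circuits any two of whose elements lie on a common circuit, inherits graph-connectivity. A proof that never invokes $c\geq 0$ cannot be complete, since the statement fails without it.

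Two further defects are worth fixing. First, your case analysis is internally inconsistent: an internal non-edge $xy$ with $x,y$ in the same part of a bipartite component \emph{destroys} bipartiteness when added (every $x$--$y$ path has even length), while a crossing non-edge preserves it; your second paragraph asserts the opposite and also claims the increment $g_{a,b,c}(F+e)-g_{a,b,c}(F)\in\{0,b\}$ ``can never equal $1$ unless the endpoints lie in distinct components,'' which is false when $b=1$---and $b=1$ is exactly the regime in which complete bipartite flats exist (a crossing non-edge forces $g(F+e)=g(F)$, violating flatness, so all crossing pairs must be edges; a within-part non-edge gives $g(F+e)=g(F)+b$, which is compatible with flatness only when $b=1$). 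Your final paragraph states the correct version, so this is repairable, but as written the two passages contradict each other. Second, in the numerical half, dependence of the connected set $F$ gives $|F|>r(F)$, where $r(F)$ is the minimum in \eqref{eq:dilworth} and hence is \emph{at most} $g_{a,b,c}(F)$; this inequality points the wrong way and does not yield $|F|\geq g_{a,b,c}(F)+1$ as you claim. You need the additional observation that a connected dependent set satisfies $|F|>g_{a,b,c}(F)$ itself, e.g.\ by writing $F$ as a union of circuits $C_1,\dots,C_m$, ordered so that each meets the union of its predecessors in a (necessarily independent) set, and applying submodularity of $A\mapsto g_{a,b,c}(A)-|A|$ so that each circuit contributes $-1$.
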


The hypothesis of Lemma \ref{lem:countflatbip} that $c\geq 0$ is needed to ensure that the circuits of  $\cM_{g_{a,b,c}}(K_n)$ induce connected subgraphs of $K_n$, which in turn implies that the same property holds for the connected flats of  $\cM_{g_{a,b,c}}(K_n)$. This is not true when $c\leq -1$, for example the disjoint union of two copies of $C_4$ is both a circuit and a connected flat in  $\cM_{g_{1,1,-1}}(K_n)$.

\begin{theorem}\label{thm:countbip}
(a) The even cycle matroid $\cM_{g_{1,1,0}}(K_n)$ is the unique maximal $C_4$-matroid on $K_n$ and its rank function is $\val_{C_4}$.
\\
(b) $\cM_{g_{2,1,2}}$ is the unique maximal $\{K_5^-, K_{3,4}\}$-matroid on $K_n$ and its rank function is $\val_{\{K_5^-, K_{3,4}\}}$.
\end{theorem}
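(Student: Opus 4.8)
The plan is to prove both parts by applying Lemma~\ref{lem:conflat} to $\cM=\cM_{g_{a,b,c}}(K_n)$, with $(a,b,c)=(1,1,0)$ in case (a) and $(a,b,c)=(2,1,2)$ in case (b). First I would check the two hypotheses of that lemma: that $\cM$ is loopless and that each member of $\cH$ is a circuit of $\cM$; both follow from \eqref{eq:induced1} together with the relevant values of $g_{a,b,c}$ (e.g.\ a $C_4$ has $g_{1,1,0}=3=|E|-1$, while $K_5^-$ and $K_{3,4}$ have $g_{2,1,2}=8$ and $11$, each one less than its number of edges). By Lemma~\ref{lem:countflatbip} every connected flat $F$ of $\cM$ is a complete graph $K_m$ or a complete bipartite graph $K_{s,t}$, with $r_\cM(F)=g_{a,b,c}(F)$, so it suffices, for each such $F$, to exhibit a proper $\cH$-sequence $\cS$ with $\val(F,\cS)=r_\cM(F)$ (rank-$1$ flats being handled by the empty sequence). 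Since a weakly $\cH$-saturated sequence from a subgraph $F_0$ satisfies $\val(F,\cS)=|E(F_0)|$, the whole task reduces to constructing each connected flat by a weakly $\cH$-saturated sequence starting from a subgraph whose number of edges equals its rank.

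For part (a), $\cM_{g_{1,1,0}}$ is the even cycle matroid, the connected flats are $K_m$ ($m\ge 4$, rank $m$) and $K_{s,t}$ ($s,t\ge 2$, rank $s+t-1$), and $\cH=\{C_4\}$. For a bipartite flat $K_{s,t}$ I would start from the double star $F_0=\{u_1w_j\}\cup\{u_iw_1: i\ge 2\}$, a spanning tree with $s+t-1$ edges, and then add each remaining edge $u_iw_j$ ($i,j\ge 2$), noting that it closes the $4$-cycle $u_iw_ju_1w_1$ whose other three edges already lie in $F_0$. For a complete flat $K_m$ I would take $F_0$ to be a triangle on $\{1,2,3\}$ together with the path $3,4,\dots,m$ (exactly $m$ edges), first completing $K_4$ and then, inductively, attaching each new vertex $t+1$: the path edge $t(t+1)$ lets me insert one diagonal edge $(t-1)(t+1)$ via a $4$-cycle through $K_t$, after which $t+1$ has two neighbours in $K_t$ and every remaining edge at $t+1$ closes a $4$-cycle.

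For part (b) the connected flats are $K_m$ ($m\ge 5$, rank $2m-2$) and $K_{s,t}$ (those with $(s-2)(t-2)\ge 2$, rank $2(s+t)-3$), and $\cH=\{K_5^-,K_{3,4}\}$. The complete flats can be handled for free: a non-bipartite $K_m$ has $\beta=0$, so $g_{2,1,2}(K_m)=2m-2$, which coincides with the edge count $(5-3)m-\binom{3}{2}+1$ of the starting subgraph in Pikhurko's weakly saturated $K_5^-$-sequence used in Theorem~\ref{thm:K3}(d)~\cite{P2}; since $K_5^-\in\cH$ this is a valid weakly $\cH$-saturated sequence. For a bipartite flat $K_{s,t}$ (say $t\ge 4$, $s\ge 3$) I would use $K_{3,4}$. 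Fix a seed $K_{3,4}$ on $\{u_1,u_2,u_3\}\times\{w_1,w_2,w_3,w_4\}$ and let $F_0$ consist of this seed minus one edge together with two further edges at each remaining vertex ($u_iw_1,u_iw_2$ for $i\ge 4$ and $w_ju_1,w_ju_2$ for $j\ge 5$); a direct count gives $|E(F_0)|=11+2(s-3)+2(t-4)=2(s+t)-3$, exactly the rank. After restoring the missing seed edge, I would grow the graph by always placing the vertex being extended on the $4$-side of the completing copy of $K_{3,4}$: for instance $u_iw_3$ ($i\ge 4$) closes the copy with $3$-side $\{w_1,w_2,w_3\}$ and $4$-side $\{u_i,u_1,u_2,u_3\}$, so only the two pre-existing edges $u_iw_1,u_iw_2$ are needed to bootstrap $u_i$; iterating this (first saturating the columns $\{w_1,w_2,w_3,w_4\}$ to build $K_{s,4}$, then attaching the remaining $w_j$ symmetrically) constructs all of $K_{s,t}$.

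The routine verifications---that $\cM$ is loopless and $\cH$-circuited, that $r_\cM(F)=g_{a,b,c}(F)$ (equivalently $\hat g_{a,b,c}=g_{a,b,c}$) on the connected flats, and that the degenerate small flats cause no trouble---are where I would be careful but expect no difficulty. The genuinely delicate step is the bipartite construction in part (b): one must check both that the edge budget of $F_0$ matches the rank \emph{exactly} (it does, by the count above) and that the extension can always be ordered so that every added edge closes a copy of $K_{3,4}$ whose other eleven edges are already present. Making this ordering precise---completing the seed, saturating the seed columns, and only then introducing the remaining $w_j$'s---is the main obstacle, and is what distinguishes case (b) from the easier $C_4$ constructions of case (a).
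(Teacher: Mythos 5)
Your proposal is correct and follows essentially the same route as the paper: verify the hypotheses of Lemma~\ref{lem:conflat} via (\ref{eq:induced1}), use Lemma~\ref{lem:countflatbip} to reduce to connected flats of the form $K_m$ or $K_{s,t}$, and construct each by a weakly saturated sequence from a starting subgraph whose edge count equals $g_{a,b,c}$. The only differences are cosmetic choices of starting subgraphs (e.g.\ the paper uses an explicit $2m-2$-edge seed for $K_m$ in case (b) where you cite Pikhurko, and a full $K_{3,3}$ plus two edges per extra vertex for $K_{s,t}$ where you use $K_{3,4}$ minus an edge), all of which lead to the same count and the same conclusion.
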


 \begin{proof} In each case,   
 $\cM_{g_{a,b,c}}(K_n)$ is loopless and is an $\cX$-matroid  on $K_n$ for
 $\cX=\{C_4\}_{K_n}$ and $\cX=\{K_5^-, K_{3,4}\}_{K_n}$, respectively, by (\ref{eq:induced1}). 
Lemmas \ref{lem:conflat} and \ref{lem:countflatbip}  will now imply that $\cM_{f_{a,b}}(K_n)$  is the unique maximal $\cX$-matroid on $K_n$ once we have shown that:
for every $K_m\subseteq K_n$ with $|E(K_m)|\geq am-c+1$, there is a weakly saturated $\cX$-sequence which constructs $K_m$ from a subgraph $G\subset K_m$
with  $|E(G)|=am-c$; and for every $K_{s,t}\subseteq K_n$ with $|E(K_{s,t})|\geq am-b-c+1$, there is a weakly saturated $\cX$-sequence which constructs $K_{s,t}$ from a subgraph $G\subset K_{s,t}$
with  $|E(G)|=am-b-c$. 
Let $V(K_m)=\{v_1,v_2,\ldots,v_m\}$ and $V(K_{s,t})=\{u_1,u_2,\ldots,u_s\}\cup \{w_1,w_2,\ldots,w_t\}$.
\\[1mm]
(a) For $m\geq 4$, $K_{m}$ can be constructed by 
a weakly saturated $C_4$-sequence starting from a spanning subgraph  $G$ with
$g_{1,1,0}(E(K_{m}))=m$ edges by taking $E(G)=\{v_2v_3\}\cup \{v_1v_i: 2\leq i\leq m\}$.
For $s,t\geq 2$,  $K_{s,t}$ can be constructed by 
a weakly saturated $C_4$-sequence starting from a spanning subgraph  $G$ with  
$g_{1,1,0}(E(K_{s,t}))=s+t-1$ edges by taking $E(G)=\{u_1w_i: 1\leq i\leq t\}\cup \{u_iw_1: 1\leq i\leq s\}$. 
\\[1mm]
(b)
For $m\geq 5$, $K_{m}$ can be constructed by 
a weakly saturated $K_5^-$-sequence starting from a spanning subgraph  $G$ with 
$g_{2,1,2}(E(K_{m}))=2m-2$ edges by taking $E(G)=\{v_1v_2,v_3v_4\}\cup \{v_iv_j: 1\leq i\leq 2,\, 3\leq j\leq m\}$.
For $s\geq 3$ and $t\geq 4$,  $K_{s,t}$ can be constructed by 
a weakly saturated $K_{3,4}$-sequence starting from a spanning subgraph  $G$ with  
$g_{2,1,2}(E(K_{s,t}))=2(s+t)-3$ edges by taking 
$E(G)=\{u_iw_j: 1\leq i\leq 3, 1\leq j\leq 3\}\cup \{u_iw_j: 1\leq i\leq 2, \, 4\leq j\leq t\}\cup \{u_iw_j: 4\leq i\leq s,\, 1\leq j\leq 2\}$. 
\end{proof}
 
The matroid  $\cM_{g_{2,1,2}}(K_n)$ in Theorem \ref{thm:countbip}(b) is the Dilworth truncation of the union of the graphic matroid and the even cycle matroid. 
It appears in the context of the rigidity of symmetric frameworks in $\R^2$, see for example~\cite{SW}.

\medskip

The concept of count matroids has been extended to hypergraphs~\cite{P1} and to group-labeled graphs~\cite{ike}. The technique in this section can be  adapted to both settings.

\medskip

We close this section with a remark on the poset of all $\{K_4,K_{2,3}\}$-matroids on $K_n$.  It is straightforward to check that $\cM_{g_{1,1,-1}}(K_n)$ is a $\{K_4,K_{2,3}\}$-matroid on $K_n$. But we cannot 
show it is the unique maximal such matroid by using the same proof technique 
 as Theorem~\ref{thm:countbip} since Lemma~\ref{lem:countflatbip} does not hold when $c<0$.
In fact, we will see in Theorem~\ref{thm:K4K23} below that $\cM_{g_{1,1,-1}}(K_n)$ is not the unique maximal $\{K_4,K_{2,3}\}$-matroid on $K_n$.


\section{Examples of Non-uniquness}\label{sec:notunique}

We will give three examples of posets of $\cH$-matroids on $K_n$ in which there is not a unique maximal matroid. We will frequently use the following fact, which follows from the procedure for constructing the free erection of a matroid due to Duke~\cite{D87}, see for example  \cite[Algorithm 1]{CJT2}.

\begin{lemma}\label{lem:sym} If $\cM_0$ is a symmetric matroid on $K_n$, then the free elevation of $\cM_0$  is a symmetric matroid on $K_n$.
\end{lemma}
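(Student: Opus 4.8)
The goal is to show that if $\cM_0$ is a symmetric matroid on $K_n$, then its free elevation is also symmetric. Recall that a matroid on $K_n$ is symmetric if the edge sets of any two isomorphic subgraphs of $K_n$ have the same rank, equivalently if the matroid is invariant under the natural action of $S_n$ (the symmetric group on the vertices) on $E(K_n)$.

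The plan is to exploit the fact that the free erection is \emph{canonically} constructed from the matroid below it, and that canonical constructions commute with symmetry. First I would recall Duke's procedure (cited as \cite{D87}, and reproduced as \cite[Algorithm 1]{CJT2}) for building the free erection of a matroid $\cN$. The key point is that this algorithm produces the free erection purely from the matroid-theoretic data of $\cN$: it identifies which dependent sets of $\cN$ should become independent in the erection, determined by intersection patterns of the closure/flat structure of $\cN$. Since the construction makes no reference to any labelling of the ground set beyond the matroid structure itself, it is invariant under any automorphism of $\cN$.

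The central observation I would make precise is the following commutation principle: if $\sigma$ is a permutation of the ground set $E(K_n)$ (induced by a vertex permutation in $S_n$) and $\cN$ is any matroid, then the free erection of $\sigma(\cN)$ equals $\sigma$ applied to the free erection of $\cN$. This holds because the free erection is the unique maximal element of the erection lattice (Crapo's theorem), and $\sigma$ is a matroid isomorphism, so it maps the erection lattice of $\cN$ isomorphically onto that of $\sigma(\cN)$, preserving the maximal element. Now if $\cM_0$ is symmetric, then $\sigma(\cM_0) = \cM_0$ for every vertex permutation $\sigma$. Applying the commutation principle gives $\sigma(\mathrm{FreeErect}(\cM_0)) = \mathrm{FreeErect}(\sigma(\cM_0)) = \mathrm{FreeErect}(\cM_0)$, so the free erection is symmetric. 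Since the free elevation is obtained by iterating the free erection until no nontrivial erection remains, and truncation also commutes with $\sigma$, an easy induction on the number of erection steps shows that each intermediate matroid is symmetric, and hence so is the free elevation.

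The main obstacle, or rather the one point requiring genuine care, is verifying the commutation principle at the level of Duke's explicit algorithm rather than merely invoking the uniqueness of the free erection abstractly. Crapo's lattice result already tells us the free erection is canonically determined, so the cleanest route is simply to argue from that uniqueness: an isomorphism of matroids induces an isomorphism of erection lattices and thus fixes (up to the isomorphism) the top element. This makes the argument essentially formal and avoids wading through the details of Algorithm 1. The iteration to the full elevation then requires only that the \emph{sequence} of free erections is itself canonical, which again follows because at each stage the free erection of a symmetric matroid is symmetric, so every matroid in the sequence is symmetric and the terminal one — the free elevation — is symmetric as claimed.
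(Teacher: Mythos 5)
Your proof is correct and rests on the same underlying idea as the paper's (the paper simply cites Duke's construction, whose canonicity is exactly the point you make precise): the free erection is determined canonically by the matroid, hence commutes with every automorphism induced by a vertex permutation, and iterating gives the free elevation. Your choice to derive this from Crapo's lattice uniqueness rather than from the steps of Duke's algorithm is a legitimate, arguably cleaner, way to justify the same canonicity.
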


Gyula Pap~\cite{Pap} observed that the cycle matroid of $K_n$ and the uniform $C_5$-matroid on $K_n$ are two distinct maximal  $C_5$-matroids on $K_n$.  
We can use Lemma \ref{lem:free_elevation1} to show that Pap's example extends to $C_k$ for all $k\geq 5$. 

\begin{theorem}\label{thm:Ck}
There are two distinct maximal $C_k$-matroids on $K_n$ for all $k\geq 5$ and $n\geq {{k-1}\choose{2}}+2$.
\end{theorem}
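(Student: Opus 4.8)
The plan is to exhibit two distinct maximal $C_k$-matroids on $K_n$, generalizing Pap's example for $k=5$. The two candidates should be (i) the cycle matroid $\cM_{f_{1,1}}(K_n)$, in which every cycle $C_k$ is a circuit provided $C_k$ is non-spanning, and (ii) a matroid built from the uniform $C_k$-matroid $\cU_{C_k}(K_n)$ by taking its free elevation. To make sense of the second candidate I first need $C_k$ to be union-stable on $K_n$: if $H_1,H_2$ are two distinct $k$-cycles sharing an edge $e$, then $H_1\cup H_2 - e$ is a graph on at least $k+1$ edges whenever the cycles are genuinely distinct (two distinct cycles through a common edge cannot have edge-union of size $k$), so the union-stable condition $|E(H_1\cup H_2 - e)|>k$ holds vacuously and $\{C_k\}_{K_n}$ is union-stable by Lemma~\ref{lem:Xstable}. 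Hence $\cU_{C_k}(K_n)$ is a genuine matroid, and I must check $\cU_{\bar{\cX}}=\cU_{C_k}(K_n)$ with $\cU_{C_k}(K_n)\neq \cU_{k-1}(K_n)$ so that Lemma~\ref{lem:free_elevation1} applies and the free elevation $\cM^*$ is a \emph{maximal} $C_k$-matroid on $K_n$.

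The first candidate, the cycle matroid, is also maximal: the cycle matroid is itself a $C_k$-matroid (each $k$-cycle is a circuit as long as it is non-spanning, which is where the hypothesis $n\geq \binom{k-1}{2}+2$ or the simpler $n\geq k+1$ enters to guarantee $C_k$ never spans $K_n$), and by Theorem~\ref{thm:K3}-style reasoning — or more directly by checking it has no non-trivial erection that keeps the $k$-cycles as circuits — it sits as a maximal element of the poset. Actually the cleanest route is to show the cycle matroid is a \emph{full elevation}: I would argue it admits no non-trivial erection within the class of $C_k$-matroids, or alternatively verify directly from Lemma~\ref{lem:conflat} that it is unique-maximal within a suitable subclass, so that it is maximal in the whole poset.

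The crux is then to show these two maximal matroids are genuinely \emph{distinct}. The natural separating witness is a copy of $K_{k-1}$ (or a nearly-complete small graph) on $\binom{k-1}{2}$ edges: the numerical hypothesis $n\geq \binom{k-1}{2}+2$ is exactly what is needed to embed such a configuration together with enough extra vertices to avoid spanning. In the cycle matroid, a copy of $K_{k-1}$ has rank $k-2$ (its spanning-tree rank), so it is highly dependent. In the free elevation $\cM^*$ of the uniform $C_k$-matroid, however, small sets of size at most $k-1$ that contain no $k$-cycle remain independent, and the erection process only introduces dependencies among sets that are forced by the cyclic-flat/covering structure (Lemma~\ref{lem:cover}); a $K_{k-1}$, having $\binom{k-1}{2}\geq k$ edges but spanning only $k-1$ vertices, will have a strictly larger rank in $\cM^*$ than in the cycle matroid because $\cM^*$ does not recognize the global tree-count bound. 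Verifying this rank discrepancy — that $r_{\cM^*}(E(K_{k-1}))>r_{\text{cycle}}(E(K_{k-1}))=k-2$ — is the step I expect to be the main obstacle, since it requires understanding which dependencies the free elevation actually creates.

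The key steps, in order, are: (1) check $C_k$ is union-stable on $K_n$ and identify $\cU_{C_k}(K_n)$, confirming $\cU_{\bar\cX}=\cU_{C_k}(K_n)\neq\cU_{k-1}(K_n)$; (2) invoke Lemma~\ref{lem:free_elevation1} to get that the free elevation $\cM^*$ is a maximal $C_k$-matroid; (3) show the cycle matroid $\cM_{f_{1,1}}(K_n)$ is also a maximal $C_k$-matroid (noting the non-spanning condition from the lower bound on $n$); and (4) use a copy of $K_{k-1}$, embeddable precisely because $n\geq\binom{k-1}{2}+2$, as a witness on which the two matroids assign different ranks, so they cannot coincide and the poset has at least two maximal elements. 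Step (4), establishing the rank gap on $K_{k-1}$, is where the real work lies; the remaining steps are verifications from the machinery already developed in Sections 2 and 3.
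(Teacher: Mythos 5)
There is a genuine gap. The heart of the paper's proof is a \emph{global rank bound} on the free elevation $\cM$ of $\cU_{C_k}(K_n)$: one takes two $k$-cycles $X,Y$ whose intersection is a path of length $k-2$, applies circuit elimination to $(X\cup Y)-e$ for $e\in X\cap Y$, observes that the resulting circuit cannot be the $4$-cycle in $K_n[(X\cup Y)-e]$ (that $C_4$ is independent in $\cU_{C_k}(K_n)$), and concludes that $\cM$ has a circuit $Z$ with a vertex of degree one in $K_n[Z]$. Lemma~\ref{lem:bound} applied with $H=Z$ (minimum degree $\delta=1$) then gives $\rank\cM\leq\binom{k-1}{2}$. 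Since the cycle matroid is a $C_k$-matroid of rank $n-1>\binom{k-1}{2}$ when $n\geq\binom{k-1}{2}+2$, it cannot lie below $\cM$ in the weak order, so some maximal element above it is distinct from $\cM$. Your proposal contains none of this: your step (4) is exactly the point where an argument is required, and you explicitly defer it (``where the real work lies''). You also misread the role of the hypothesis $n\geq\binom{k-1}{2}+2$ --- it is not about embedding a $K_{k-1}$ (which needs only $k-1$ vertices) but about making $n-1$ exceed the rank bound $\binom{k-1}{2}$.

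There is also a logical problem with how your steps combine. Even if you established the rank gap $r_{\cM^*}(E(K_{k-1}))>k-2=r_{\mathrm{cycle}}(E(K_{k-1}))$, this only shows the two matroids are \emph{distinct}; it is perfectly consistent with $\cM_{f_{1,1}}(K_n)\preceq\cM^*$, since the weak order only forces $r_{\cM^*}\geq r_{\mathrm{cycle}}$ pointwise. To get two distinct \emph{maximal} elements you would additionally need your step (3), that the cycle matroid is itself maximal among $C_k$-matroids --- an assertion you leave unproved (``Theorem~\ref{thm:K3}-style reasoning'' does not apply, since that theorem concerns $K_3$-matroids, not $C_k$-matroids for $k\geq 5$). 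The paper sidesteps both issues at once: the rank bound $\rank\cM\leq\binom{k-1}{2}<n-1$ shows a spanning tree is independent in the cycle matroid but dependent in $\cM$, so the cycle matroid is not below $\cM$, and maximality of the cycle matroid is never needed.
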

\begin{proof}
It is straightforward to check that  $C_k$ is union-stable, and hence $\cU_{C_k}(K_n)$ is a matroid. 
Consider the free elevation $\cM$ of $\cU_{C_k}(K_n)$. Lemmas \ref{lem:free_elevation1} and \ref{lem:sym} imply that  $\cM$ is a maximal $C_k$-matroid on $K_n$ and is symmetric.
We will show that $\cM$ contains a circuit $Z$ such that $K_n[Z]$ has minimum degree one. To see this, consider two distinct copies $X$ and $Y$ of $C_k$ such that $X\cap Y$ forms a path of length $k-2$. By the circuit elimination axiom, $(X\cup Y) - e$ contains a circuit $Z$ of $\cM$ for any $e\in X\cap Y$. Since the copy of $C_4$ in $K_n[(X\cup Y) - e]$ is not a circuit in $\cU_{C_k}(K_n)$, it cannot be a circuit in $\cM$. Hence   $( X\cup Y) - e$ contains a circuit $Z$ such that $K_n[Z]$ has minimum degree one.
We may now apply  Lemma \ref{lem:bound} with $H=Z$ to deduce that the rank of $\cM$ is at most ${k-1}\choose{2}$.

The facts that $\cM$ is a maximal $C_k$-matroid and the  cycle matroid of $K_n$ is a $C_k$-matroid of rank $n-1$,  now imply that there are  at least two maximal $C_k$-matroids on $K_n$ whenever $n\geq {{k-1}\choose{2}}+2$.
\end{proof}
 
W saw in Theorem~\ref{thm:K23Kmn} that the poset of all $K_{2,3}$-matroids on $K_{m,n}$ has a unique maximal element. We next show that this statement becomes false if we change the ground set to $E(K_n)$.
 
 \begin{theorem}\label{thm:K23}
There are two distinct maximal $K_{2,3}$-matroids on $K_n$ for all $n\geq 7$.
\end{theorem}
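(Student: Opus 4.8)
The goal is to exhibit two distinct maximal $K_{2,3}$-matroids on $K_n$ for $n\geq 7$. The plan follows the same template as the $C_k$ example in Theorem~\ref{thm:Ck}: produce one ``large'' $K_{2,3}$-matroid of high rank coming from the count-matroid world, and one ``small'' maximal $K_{2,3}$-matroid obtained as the free elevation of the union-stable uniform matroid, then show their ranks differ.

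First I would identify the high-rank $K_{2,3}$-matroid. Theorem~\ref{thm:K23Kmn} already tells us that $\cM_{f_{1,0}}(K_{m,n})$ has every $K_{2,3}$ as a circuit; the corresponding count matroid on $K_n$ is $\cM_{f_{1,0}}(K_n)$, whose circuits include every copy of $K_{2,3}$ by (\ref{eq:induced1}), since $|E(K_{2,3})|=6=1\cdot 5+1=f_{1,0}(K_{2,3})+1$ and every proper subgraph $F'$ satisfies $|F'|\le |V(F')|$. This matroid has rank $f_{1,0}(E(K_n))=n$ (for $n$ large). So $\cM_{f_{1,0}}(K_n)$ is a $K_{2,3}$-matroid of rank $n$, and in particular there exists a maximal $K_{2,3}$-matroid of rank at least $n$.

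Next I would build the competing small matroid. Since $K_{2,3}$ is union-stable on $K_n$ (one checks that merging two copies of $K_{2,3}$ sharing an edge and deleting a common edge either yields another $K_{2,3}$ or a graph with more than six edges), $\cU_{K_{2,3}}(K_n)=\cU_{\{K_{2,3}\}_{K_n}}$ is a matroid by Lemma~\ref{lem:stable}. Let $\cM$ be its free elevation, which is a maximal $K_{2,3}$-matroid on $K_n$ and is symmetric by Lemmas~\ref{lem:stable} and~\ref{lem:sym}. As in the proof of Theorem~\ref{thm:Ck}, I would produce a circuit $Z$ of $\cM$ whose induced graph $K_n[Z]$ has small minimum degree, by applying circuit elimination to two overlapping copies of $K_{2,3}$ and using the fact that the ``small'' leftover configuration is not a circuit in $\cU_{K_{2,3}}(K_n)$. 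Applying Lemma~\ref{lem:bound} with $H=Z$ (where $H$ has minimum degree $\delta$, and $Z$ is chosen so $\delta$ is as small as possible) bounds the rank of $\cM$ by a constant independent of $n$, or at worst by a quantity strictly smaller than $n$ for $n\geq 7$. Comparing this bound with the rank-$n$ matroid $\cM_{f_{1,0}}(K_n)$ then forces $\cM\neq\cM_{f_{1,0}}(K_n)$, giving two distinct maximal elements.

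The main obstacle is the combinatorial heart of the circuit-elimination step: I must locate two copies $X,Y$ of $K_{2,3}$ in $K_n$ overlapping in a controlled way so that, after deleting a shared edge $e$, the set $(X\cup Y)-e$ contains a circuit $Z$ of $\cM$ whose induced subgraph has a vertex of low degree, while simultaneously ruling out that any small ``easy'' subconfiguration of $K_n[(X\cup Y)-e]$ is itself a circuit of $\cU_{K_{2,3}}(K_n)$ (and hence of $\cM$). Getting the degree bound low enough that Lemma~\ref{lem:bound} yields a rank bound below $n$ for all $n\geq 7$ is the delicate calibration; this is where the explicit value $n\geq 7$ enters, and I expect the bookkeeping of which overlap of two $K_{2,3}$'s produces the sparsest extractable circuit to be the most technical part of the argument.
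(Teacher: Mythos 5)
Your overall strategy coincides with the paper's: the high-rank competitor is the bicircular matroid $\cM_{f_{1,0}}(K_n)$ of rank $n$, the low-rank competitor is the free elevation $\cM$ of $\cU_{K_{2,3}}(K_n)$, and the two are separated by extracting a low-degree circuit of $\cM$ and invoking Lemma~\ref{lem:bound}. The gap lies exactly in the step you flag as the technical heart, and it is a genuine one: a \emph{single} circuit elimination between two copies of $K_{2,3}$ cannot yield a circuit of minimum degree one, and minimum degree two does not suffice. Indeed, a vertex of $(X\cup Y)-e$ can have degree one only if it is an endpoint $c$ of the deleted edge $e$ with degree two and the same neighbourhood $\{a,b\}$ in both copies; but then $X=K(\{a,b\};\{c,s,t\})$ and $Y=K(\{a,b\};\{c,u,w\})$, and $(X\cup Y)-e$ contains a full copy $K(\{a,b\};\{p,q,r\})$ of $K_{2,3}$ on three of the remaining degree-two vertices, which is already a circuit of $\cU_{K_{2,3}}(K_n)$ --- so the circuit guaranteed by elimination may simply be that $K_{2,3}$ and nothing new is learned. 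In every other overlap the extracted circuit has minimum degree at least two, and Lemma~\ref{lem:bound} with $\delta=2$ gives only $\operatorname{rank}\cM\leq (n-s+1)+\binom{s-1}{2}\geq n$ for $s\geq 4$, so your fallback ``at worst strictly smaller than $n$'' fails and the two matroids are not separated.

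The paper closes this gap with a \emph{two-stage} elimination. First it eliminates between two copies of $K_{2,3}$ sharing a $4$-cycle (e.g.\ $K(\{a,b\};\{c,d,e\})$ and $K(\{c,d\};\{a,b,f\})$); the resulting $7$-edge set contains no copy of $K_{2,3}$, so since every $6$-edge non-$K_{2,3}$ set is independent it is itself a circuit $G_3$, still of minimum degree two. It then eliminates between $G_3$ and an isomorphic copy $G_4$ to obtain a $7$-edge circuit $G_5$ on five vertices with a vertex of degree one. Only now does Lemma~\ref{lem:bound} (with $\delta=1$, $s=5$) give $\operatorname{rank}\cM\leq\binom{4}{2}=6$, whence $\cM=\cU_{K_{2,3}}(K_n)$ and the comparison with the rank-$n$ bicircular matroid succeeds for $n\geq 7$. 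You would need to supply this second round (or an equivalent construction of a minimum-degree-one circuit) to complete your argument; the remaining steps of your proposal are sound.
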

\begin{proof}
Since $K_{2,3}$ is union-stable, $\cU_{K_{2,3}}(K_n)$ is a matroid by Lemma~\ref{lem:stable},
and its free elevation $\cM$ is a maximal $K_{2,3}$-matroid on $K_n$ and is symmetric  by Lemmas \ref{lem:free_elevation1} and \ref{lem:sym}. We will show that $\cU_{K_{2,3}}(K_n)$ has no non-trivial erection and hence $\cM=\cU_{K_{2,3}}(K_n)$.

We first show that $\cM$ contains a circuit $Z$ such that $K_n[Z]$ has minimum degree one.
To see this consider the graphs $G_1$ and $G_2$ given in Figure~\ref{fig:K23}.
Both are isomorphic to $K_{2,3}$, and hence, by the circuit elimination axiom,
the edge set of $G_3=(G_1\cup G_2)-v_2v_5$, is dependent in $\cM$. 
Since every set of six edges which does not induce a copy $K_{2,3}$  is independent in $\cM$, 
the edge set of $G_3$ is a circuit in $\cM$.
Since the graph $G_4$ in Figure~\ref{fig:K23} is isomorphic to $G_3$, the circuit elimination axiom now implies
the edge set of $G_5=(G_3\cup G_4)-v_4v_5$ is dependent  in $\cM$.
Again, since every set of six edges which does not induce a copy $K_{2,3}$ is independent in $\cM$, 
the edge set of $G_5$ is a circuit  in $\cM$.

\begin{figure}[t]
\centering
\begin{minipage}{0.3\textwidth}
\centering
\includegraphics[scale=0.6]{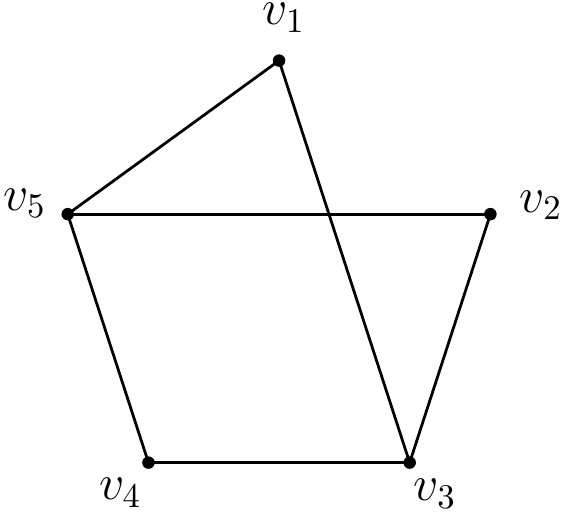}
\par
$G_1$
\end{minipage}
\begin{minipage}{0.3\textwidth}
\centering
\includegraphics[scale=0.6]{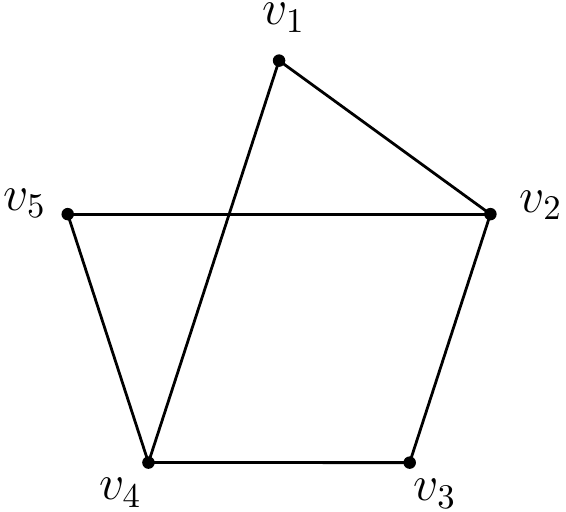}
\par
$G_2$
\end{minipage}
\begin{minipage}{0.3\textwidth}
\centering
\includegraphics[scale=0.6]{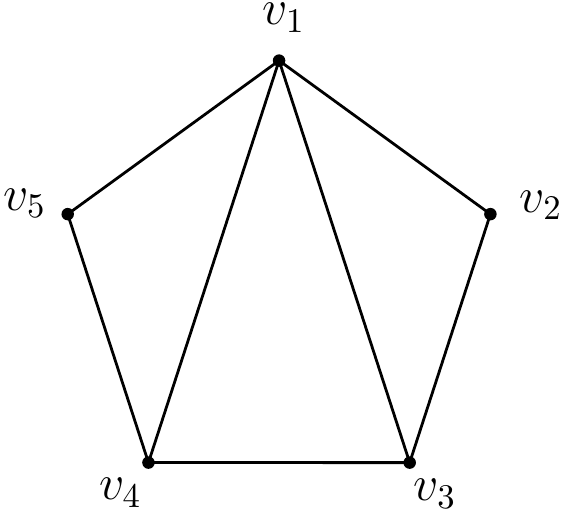}
\par
$G_3$
\end{minipage}

\begin{minipage}{0.3\textwidth}
\centering
\includegraphics[scale=0.6]{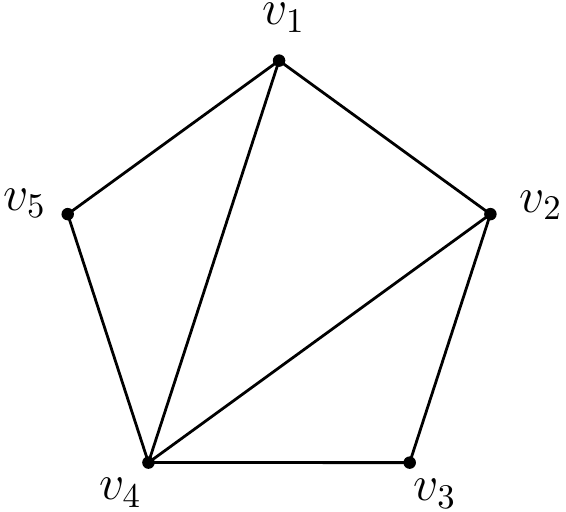}
\par
$G_4$
\end{minipage}
\begin{minipage}{0.3\textwidth}
\centering
\includegraphics[scale=0.6]{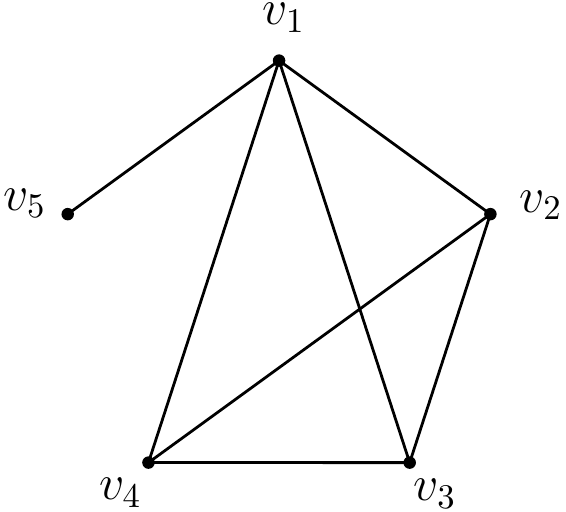}
\par
$G_5$
\end{minipage}
\caption{The graphs $G_1, G_2,\ldots,G_5$ in the proof of Theorem~\ref{thm:K23}.}
\label{fig:K23}
\end{figure}

This implies that $\cM$ is a $G_5$-matroid on $K_n$ and hence, by Lemma~\ref{lem:bound}, the rank of $\cM$ is at most $6$. Since $\cU_{K_{2,3}}(K_n)$ has rank 6, this gives  $\cM=\cU_{K_{2,3}}(K_n)$, and hence
$\cU_{K_{2,3}}(K_n)$ is a maximal $K_{2,3}$-matroid on $K_n$.
Since the bicircular matroid $\cM_{f_{1,0}}$ is a  $K_{2,3}$-matroid on $K_n$ of rank $n$, 
we have at least two maximal $K_{2,3}$-matroids on $K_n$ whenever $n\geq 7$.
\end{proof}


Our final example of this section shows that the unique maximality property may not hold even if we restrict our attention to the poset of all partial elevations of a given $\cH$-matroid on $K_n$ (and hence provides another example, in addition to that given by Brylawski~\cite{B86},  which  shows that the free elevation may not be the unique maximal matroid in the poset of all partial elevations of a given matroid).
Note that the matroids described in Theorem~\ref{thm:Ck} and \ref{thm:K23} do not give such an example.


\begin{theorem}\label{thm:K4K23}
There are two distinct maximal matroids in the poset of all partial elevations of the uniform $\{K_4,K_{2,3}\}$-matroid $\cU_{\{K_4, K_{2,3}\}}(K_n)$ whenever $n\geq 36$.	
\end{theorem}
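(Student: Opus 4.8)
The plan is to exhibit two distinct maximal elements of the poset $P:=P(\cU_{\{K_4,K_{2,3}\}}(K_n))$ of partial elevations: the free elevation $\cM$ of $\cU_{\{K_4,K_{2,3}\}}(K_n)$, whose rank I will bound by an absolute constant, and a maximal element $\cM^*$ lying above the count-type matroid $\cM_{g_{1,1,-1}}(K_n)$, whose rank grows linearly in $n$. Since $\cM^*\succeq \cM_{g_{1,1,-1}}$ forces $\rank \cM^*\geq \rank \cM_{g_{1,1,-1}}$, it suffices to arrange $\rank \cM_{g_{1,1,-1}}(K_n)>\rank \cM$ for $n\geq 36$; the two matroids are then distinct, and both are maximal in $P$.

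First I would check that $\cM_{g_{1,1,-1}}(K_n)$ genuinely lies in $P$, i.e.\ that it is a partial elevation of $\cU_{\{K_4,K_{2,3}\}}(K_n)$. Because a matroid is a partial elevation of its own rank-$6$ truncation, and truncation only discards top-size independent sets, it is enough to show that the truncation of $\cM_{g_{1,1,-1}}(K_n)$ to rank $6$ equals $\cU_{\{K_4,K_{2,3}\}}(K_n)$: that every set of at most $5$ edges is independent and that the only dependent $6$-edge sets are copies of $K_4$ and $K_{2,3}$. Using (\ref{eq:induced1}) and the fact that any circuit $C$ satisfies $|C|=|V(C)|-\beta(C)+2$, a short case analysis on $|V(C)|$, on $\beta(C)$, and on whether $C$ is graph-connected (noting that in a genuine matroid circuit each graph-component must be tight, which rules out any disconnected $6$-edge circuit) shows the smallest circuits have exactly $6$ edges and are precisely the copies of $K_4$ and $K_{2,3}$. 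A separate computation with the Edmonds formula (\ref{eq:dilworth}), comparing the single-part partition against all refinements, gives $\rank \cM_{g_{1,1,-1}}(K_n)=g_{1,1,-1}(E(K_n))=n+1$.

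Next I would bound $\rank \cM$. By Lemma~\ref{lem:free_elevation}, $\cM$ is a maximal element of $P$; since $\cU_{\{K_4,K_{2,3}\}}(K_n)$ is symmetric, Lemma~\ref{lem:sym} shows $\cM$ is symmetric, and as erection preserves independence of sets of size at most $6$, every copy of $K_4$ and of $K_{2,3}$ is a circuit of $\cM$ while the dependent $6$-edge sets of $\cM$ are exactly these copies. Following the strategy of Theorems~\ref{thm:Ck} and~\ref{thm:K23}, I would apply the circuit elimination axiom repeatedly to copies of $K_4$ and $K_{2,3}$ to produce a circuit $Z$ of $\cM$ for which $K_n[Z]$ has minimum degree one. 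The subtlety is that $Z$ must be a genuine circuit, so no proper subset of $Z$ may equal a copy of $K_4$ or $K_{2,3}$; thus every intermediate set in the elimination must avoid these. Because $K_4$ is now also a circuit (unlike in Theorem~\ref{thm:K23}, where only $K_{2,3}$ was forbidden), the admissible $Z$ is forced to spread over more vertices, and a careful construction yields such a $Z$ with $|V(Z)|=10$. By symmetry $\cM$ is then a $Z$-matroid, so Lemma~\ref{lem:bound} with $H=Z$ ($\delta=1$, $s=10$) gives $\rank \cM\leq \binom{9}{2}=36$.

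To conclude, for $n\geq 36$ we have $\rank \cM_{g_{1,1,-1}}(K_n)=n+1\geq 37>36\geq \rank \cM$. Since $P$ is finite, choose a maximal element $\cM^*\in P$ with $\cM^*\succeq \cM_{g_{1,1,-1}}(K_n)$; then $\rank \cM^*\geq n+1>\rank \cM$, so $\cM^*\neq \cM$, giving two distinct maximal elements of $P$. I expect the main obstacle to be the construction of the minimum-degree-one circuit $Z$ in the third step: one must run the circuit elimination so that the forbidden subgraphs $K_4$ and $K_{2,3}$ never occur as proper subsets, and then verify that the resulting $10$-vertex edge set is indeed dependent—hence a circuit—in the free elevation. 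Everything else (the truncation check, the rank computation $n+1$, and the final comparison) is routine once this circuit is in hand.
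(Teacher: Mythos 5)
Your overall strategy is exactly the paper's: show the free elevation $\cM$ of $\cU_{\{K_4,K_{2,3}\}}(K_n)$ has rank at most $36$ via a minimum-degree-one circuit and Lemma~\ref{lem:bound}, check that $\cM_{g_{1,1,-1}}(K_n)$ (rank $n+1$) is a partial elevation of $\cU_{\{K_4,K_{2,3}\}}(K_n)$, and conclude that any maximal element above it is distinct from $\cM$. The truncation check, the rank computation $n+1$, and the final comparison are all fine and match the paper. The problem is that the one step you flag as "the main obstacle" --- producing a genuine circuit $Z$ with a degree-one vertex --- is precisely the substance of the proof, and the route you sketch for it does not work as stated. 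Circuit elimination only certifies that $(X_1\cup X_2)-e$ is \emph{dependent}, not that it is a circuit; to promote it to a circuit you must know that no proper subset is dependent, and for subsets of size $7$ or $8$ this is not determined by the $6$-uniform circuit data. Concretely, whether the disjoint union of a triangle and a $4$-cycle ($7$ edges), or of two $4$-cycles ($8$ edges), is dependent in the free elevation is not something you can decide a priori, and ruling out "no forbidden copies of $K_4$ or $K_{2,3}$ as subsets" does not rule out these configurations being circuits themselves.

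The paper resolves this with a three-way case analysis on exactly those two configurations $D_1$ (triangle plus $4$-cycle) and $D_2$ (two $4$-cycles). If either is dependent it is automatically a circuit (no $6$-edge subset is a copy of $K_4$ or $K_{2,3}$), and then the $\cX$-covering property of the free elevation (Lemma~\ref{lem:cover}) forces its closure to be strictly larger, which yields a circuit with a degree-one vertex on at most $10$ vertices. If both are independent, no elimination among copies of $K_4$ and $K_{2,3}$ is used at all: instead one invokes Theorem~\ref{thm:K23Kmn} to bound the rank of a specific bipartite-plus-chord auxiliary graph $D_3$ by $|V(D_3)|$, shows an $8$-edge independent subset already spans it, and extracts a degree-one circuit from the resulting dependence. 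So the missing ingredients in your proposal are (i) the case split on the dependence of $D_1$ and $D_2$, (ii) the use of Lemma~\ref{lem:cover} to guarantee a nontrivial closure in the dependent cases, and (iii) the bipartite rank bound from Theorem~\ref{thm:K23Kmn} in the case where both are independent. Without some substitute for these, the claim $\rank\cM\leq 36$ is unsupported, and the theorem does not follow.
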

\begin{proof}
Let $\cX=\{K_4, K_{2,3}\}_{K_n}$. Since $\cX$ is $6$-uniform and union-stable, $\cU_{\cX}$ is a matroid. Hence   
the free elevation $\cM$ of $\cU_{\cX}$ is a symmetric $\cX$-matroid on $K_n$ by Lemma \ref{lem:sym}.
We will show that  $\cM$ has bounded rank.

\begin{claim}\label{claim:K4K232}
The rank of $\cM$ is at most $36$.
\end{claim}
\begin{proof}
Let $D_1$ be the edge set of the union of  a vertex-disjoint 3-cycle and 4-cycle  in $K_n$ and $D_2$ be the edge set of the union of two vertex-disjoint 4-cycles in $K_n$.
We split the proof into three cases.

\paragraph{\boldmath Case 1: $D_1$ is dependent in $\cM$.}
Since $|D_1|=7$ and neither $K_4$ nor $K_{2,3}$ is contained in $K_n[D_1]$, every proper subset of $D$ is independent in $\cM$,
and hence $D_1$ is a circuit in $\cM$.
By Lemma~\ref{lem:cover}, the closure $\cl_\cM(D_1)$ of $D_1$ is the union of copies of $K_4$ and $K_{2,3}$.
Hence  $\cl_\cM(D_1)\neq D_1$ and, for each $e\in \cl_\cM(D_1)\setminus D_1$, 
there exists a circuit $C$ with $e\in C\subsetneq D_1+e$.
Since $K_n[D_1+e]$ cannot contain $K_4$ or $ K_{2,3}$ and $|D_1+e|=8$ we have $|C|=7$. Observe that  any 7-element subset of $D_1+e$ containing $e$ has a vertex of degree one. We may now use Lemma \ref{lem:bound} and the fact that $|V(C)|\leq 9$
to deduce that $\cM$ has rank at most ${{8}\choose{2}}=28$.

\paragraph{\boldmath Case 2:  $D_1$ is  independent in $\cM$ and $D_2$ is  dependent in $\cM$.}
If some proper subset $C$ of $D_2$ is a circuit in $\cM$ then $K_n[C]$ would contain a vertex of degree one and we could again use Lemma \ref{lem:bound} and the fact that $|V(C)|\leq 8$ to deduce that $\cM$ has rank at most ${{7}\choose{2}}=21$. Hence we may assume that $D_2$  is a circuit.

By Lemma~\ref{lem:cover}, $\cl_\cM(D_2)$  is the union of copies of $K_4$ and $K_{2,3}$.
Hence  $\cl_\cM(D_2)\neq D_2$ and, for each $e\in \cl_\cM(D_2)\setminus D_2$, there exists a circuit $C'$ with $e\in C'\subsetneq D_2+e$.
Since  $K_n[D_2+e]$ cannot contain $K_4$ or $K_{2,3}$,  and $|D_2+e|=9$, we have $7\leq |C'|\leq 8$. Observe that every subset of $D_2+e$ of size 7 or 8 which contains $e$ and is distinct from $D_1$, has a vertex of degree one. Hence, we may use Lemma \ref{lem:bound}  and the fact that $|V(C')|\leq 10$ to deduce that $\cM$ has rank at most ${{9}\choose{2}}=36$.

\paragraph{\boldmath
Case 3: $D_1,D_2$ are independent in $\cM$.}
By Theorem~\ref{thm:K23Kmn}, if $F\subseteq  E(K_n)$ induces a bipartite subgraph in $K_n$, then $F$ has rank at most $|V(F)|$ in any $K_{2,3}$-matroid. 
We can use this fact, to compute the rank of the edge set $D_3$ of the graph $G$ in Figure~\ref{fig:K4K23v2}, in $\cM$. 
Let 
$B$ be a base of $D_3$ which contains the independent subset  $D_2$.
Since $D_2+e$ induces a bipartite graph with $|V(D_2+e)|+1$ edges for each $e\in D_3\sm D_2$, $D_2+e$ is dependent. Hence $B=D_2$ and  
$r_{\cM}(D_3)=|D_2|= 8$.

On the other hand the edge set $D_4$ of the 
5-cycle with a chord in $G$ is independent in $\cM$ 
(since $D_4$ is independent 
in $\cU_\cX$). 
Since $r_\cM(D_3)=8$, this implies that 
$D_3-e$ contains a circuit $C$ of $\cM$ with 
$1\leq |C\sm D_4|\leq 3$. Then $K_n[C]$ has a vertex of degree 1, and we may again use Lemma \ref{lem:bound} and the fact that $|V(C)|\leq 8$ to deduce that $\cM$ has rank at most ${{7}\choose{2}}=21$.
\end{proof}

We can now complete the proof by observing that the matroid $\cM_{g_{1,1,-1}}(K_n)$ from  Section \ref{sec:induced} is a partial elevation of $\cU_\cX$ and has rank $n+1$.
Since $\cM$ is a maximal partial elevation of $\cU_\cX$  by Lemma \ref{lem:free_elevation} and has rank at most $36$ by Claim~\ref{claim:K4K232},  $\cM$ is not the unique  maximal partial elevation of $\cU_\cX$ for all $n\geq 36$.
This completes the proof.
\end{proof}

The above  proof also implies that there are two distinct maximal $\{K_4, K_{2,3}\}$-matroids on $K_n$ for all $n\geq 36$. The only modification is that we use Lemma \ref{lem:free_elevation1} in the final paragraph to deduce that $\cM$ is a maximal $\{K_4, K_{2,3}\}$-matroid on $K_n$.


\begin{figure}[t]
\centering
\begin{minipage}{0.45\textwidth}
\centering
\includegraphics[scale=0.6]{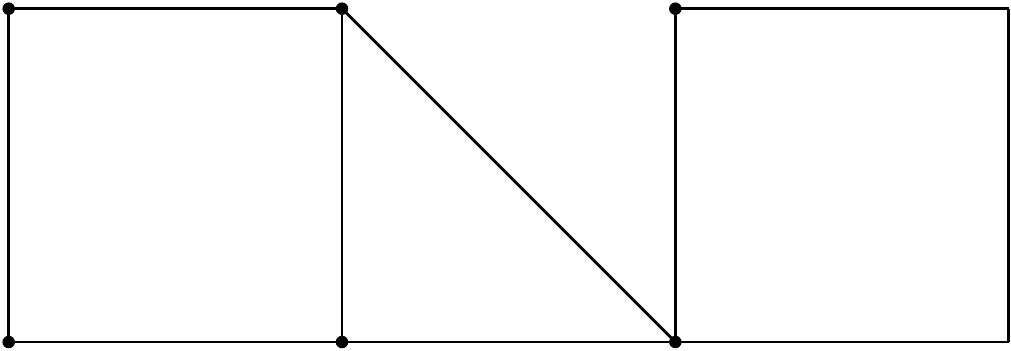}
\par
$G$
\end{minipage}
\caption{The graph $G$ in the proof of Theorem~\ref{thm:K4K23}.}
\label{fig:K4K23v2}
\end{figure}

\section{Matroids from Rigidity, Hyperconnectivity and Matrix Completion}
\subsection{\boldmath Rigidity matroids, cofactor matroids and $K_{d+2}$-matroids}
We are given a generic realisation $p:V(K_n)\to \R^d$ and we would like to know when a subgraph $G\subset K_n$ is {\em $d$-rigid} i.e., every continuous motion of the vertices of $(G,p)$ which preserves the distances between adjacent pairs of vertices must preserve the distances between all pairs of vertices.  The (edge sets of the) minimal $d$-rigid spanning subgraphs of $K_n$ are the bases of a matroid $\cR_d(K_n)$ which is referred to as the {\em $d$-dimensional   generic rigidity matroid}. It is well known that $\cR_d(K_n)$ is a  $K_{d+2}$-matroid on $K_n$ and
$\cR_1(K_n)$ is the cycle matroid of $K_n$. Pollaczek-Geiringer \cite{P-G} and subsequently Laman~\cite{Lam} showed that $\cR_2(K_n)=\cM_{f_{2,3}}(K_n)$. Charactising $\cR_d(K_n)$ for $d\geq 3$ is an important open problem in discrete geometry.

Graver \cite{G} suggested we may get a better understanding of $\cR_d(K_n)$ by studying the poset of all {\em abstract $d$-rigidity matroids on $K_n$}. This can be defined, using a result of Nguyen \cite{N}, as the poset of all $K_{d+2}$-matroids on $K_n$ of rank $dn-{{d+1}\choose{2}}$. Graver conjectured that $\cR_d(K_n)$ is the unique maximal element   in this poset and verified his conjecture for the cases when $d=1,2$. The same proofs yield the slightly stronger results given in Theorem~\ref{thm:K3}(a) and (b). 

Whiteley \cite{Wsurvey} showed that Graver's conjecture is false when $d\geq 4$ by showing that the cofactor matroid $\cC^{d-2}_{d-1}(K_n)$ from the theory of bivariate splines  is an abstract $d$-rigidity matroid for all $d\geq 1$, and that $\cR_d(K_n)\not\succ  \cC^{d-2}_{d-1}(K_n)$ for all $d\geq 4$ and sufficiently large $n$. He offered the revised conjecture
that $\cC^{d-2}_{d-1}(K_n)$ is the unique maximal element   in the poset
of all abstract $d$-rigidity matroids on $K_n$. We recently verified the case  $d=3$ of this conjecture in joint work with Clinch \cite{CJT1}. 

\begin{theorem}[\cite{CJT1}]\label{thm:cof}
The cofactor  matroid $\cC^{1}_{2}(K_n)$  is the unique maximal $K_5$-matroid on $K_n$ and $\val_{K_5}$ is its rank function.
\end{theorem}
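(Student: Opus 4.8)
The plan is to obtain the result as an application of Lemma~\ref{lem:conflat} with $\cM=\cC^1_2(K_n)$ and $\cX=\{K_5\}_{K_n}$. By the facts recalled at the start of this section, $\cC^1_2(K_n)$ is an abstract $3$-rigidity matroid, hence a $K_5$-matroid of rank $3n-6$, and it is loopless since every single edge is independent. So all hypotheses of Lemma~\ref{lem:conflat} hold apart from the condition on connected flats, and once that condition is verified the lemma delivers \emph{both} conclusions of the theorem simultaneously: $\val_{K_5}=r_\cM$ and $\cM$ is the unique maximal $K_5$-matroid on $K_n$. Thus everything reduces to showing that for every connected flat $F$ of $\cC^1_2(K_n)$ there is a proper $K_5$-sequence $\cS$ with $r_\cM(F)=\val(F,\cS)$.

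The tempting shortcut is to imitate the proof of Theorem~\ref{thm:K3}(b). There the connected flats of the count matroid are complete graphs (Lemma~\ref{lem:countflat}), and a complete graph $K_m$ of rank $3m-6$ can be built from a seed of $3m-6$ edges by a weakly saturated $K_5$-sequence $\cS$, whereupon the remark preceding Lemma~\ref{lem:uniform} gives $\val(F,\cS)=3m-6=r_\cM(F)$; the existence of such a sequence is exactly Kalai's weak saturation theorem~\cite{Khyp}, since $\mathrm{wsat}(K_m,K_5)=\binom{m}{2}-\binom{m-3}{2}=3m-6$. The main obstacle is that this reduction fails for $\cC^1_2(K_n)$: unlike a count matroid, its rank function is not given by any simple counting formula, and its connected flats need not be complete graphs and admit no easy combinatorial description. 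This is precisely the difficulty flagged in the introduction as being ``illustrated by the matroids discussed in Section~6'', and it is why the weakly saturated sequence machinery of Sections~3 and 4 is not, by itself, sufficient here.

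To carry out the flat condition in this harder setting I would lean on the structure theory of $\cC^1_2$-rigid graphs together with the matroid-erection tools recalled above, as developed in \cite{CJT1}. The scheme is an induction on $|V(F)|$ over connected flats: when $F$ splits across a small vertex separator one treats the pieces separately and concatenates their $K_5$-sequences exactly as in the disconnected case inside the proof of Lemma~\ref{lem:conflat}; when $F$ is irreducible one uses the recursive generation of $\cC^1_2$-rigid graphs by cofactor-preserving operations (extensions, vertex splitting, and gluing of rigid blocks) to peel the flat down to a rigid seed and to grow a $K_5$-sequence whose running value tracks the rank increments. The genuinely hard part is the bookkeeping that keeps each added set a true copy of $K_5$ that is proper (i.e.\ $X_i\not\subseteq\bigcup_{j<i}X_j$) while driving the value $\val(F,\cS)$ down to the lower bound $r_\cM(F)$ guaranteed by Lemma~\ref{lem:upper1}; this is the technical heart of \cite{CJT1}, and the steps above package its output into the single hypothesis needed to invoke Lemma~\ref{lem:conflat}.
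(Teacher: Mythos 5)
The paper does not prove Theorem~\ref{thm:cof} at all: it is stated as an imported result from \cite{CJT1} (with the rank-function characterisation resting on the companion paper \cite{CJT2}), so there is no internal argument to compare against. Your proposal correctly frames how the theorem would follow from Lemma~\ref{lem:conflat}, correctly identifies why the weak-saturation shortcut of Sections~3--4 fails for the cofactor matroid, and --- exactly like the paper --- defers the genuine content (verifying the connected-flat condition for $\cC^1_2(K_n)$) to \cite{CJT1}, so it is consistent with the paper's treatment.
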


We propose the following  strengthening of Whiteley's conjecture.

\begin{conjecture}\label{con:cof}
The cofactor  matroid $\cC^{d-2}_{d-1}(K_n)$  is the unique maximal $K_{d+2}$-matroid on $K_n$ for all $d\geq 1$ and $\val_{K_{d+2}}$ is its rank function.
\end{conjecture}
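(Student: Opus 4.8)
The plan is to prove the stronger statement that $\val_{K_{d+2}}$ is the rank function of $\cC^{d-2}_{d-1}(K_n)$; by Lemma~\ref{lem:conflat} this would simultaneously establish that the cofactor matroid is the unique maximal $K_{d+2}$-matroid on $K_n$. Whiteley's theorem that $\cC^{d-2}_{d-1}(K_n)$ is an abstract $d$-rigidity matroid tells us that it is a loopless $K_{d+2}$-matroid on $K_n$ of rank $dn-\binom{d+1}{2}$, so all hypotheses of Lemma~\ref{lem:conflat} hold apart from the condition on connected flats. Thus the whole problem reduces to the following: for every connected flat $F$ of $\cC^{d-2}_{d-1}(K_n)$, I would need to exhibit a proper $K_{d+2}$-sequence $\cS$ with $r_{\cC^{d-2}_{d-1}}(F)=\val(F,\cS)$.

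The straightforward part of this reduction concerns connected flats that happen to be complete graphs. If $F=E(K(W))$ with $|W|=m\geq d+2$, then $r_{\cC^{d-2}_{d-1}}(F)=dm-\binom{d+1}{2}$, and a classical result on weak saturation numbers (see, e.g., \cite{Ksym}) shows that $K_m$ can be built by a weakly $K_{d+2}$-saturated sequence from a spanning subgraph with exactly $dm-\binom{d+1}{2}$ edges; the associated proper $K_{d+2}$-sequence $\cS$ then satisfies $\val(F,\cS)=dm-\binom{d+1}{2}=r_{\cC^{d-2}_{d-1}}(F)$, exactly as in the proofs of Theorem~\ref{thm:K3}(a),(b). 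This already covers the cases $d\leq 2$, where the relevant matroid is a count matroid and Lemma~\ref{lem:countflat} guarantees that every connected flat is a complete graph.

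The crux — and the reason the statement is only a conjecture for $d\geq 3$ — is that once $d\geq 3$ the cofactor matroid is no longer a count matroid and its connected flats can no longer be assumed to be complete graphs; indeed, were they all complete, Theorem~\ref{thm:cof} would be an immediate corollary of Lemma~\ref{lem:conflat} and weak saturation, whereas its proof in \cite{CJT1} required Crapo's matroid erection theory. The structure of these flats is governed by the algebraic geometry of the bivariate spline (cofactor) construction and is not understood for $d\geq 4$, which is precisely the phenomenon flagged in Section~6. The essential task is therefore to classify the connected flats of $\cC^{d-2}_{d-1}(K_n)$ and, for each non-complete connected flat $F$, to produce a certifying proper $K_{d+2}$-sequence attaining $r_{\cC^{d-2}_{d-1}}(F)$. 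The realistic route for general $d$ is to extend the erection-theoretic analysis of the $d=3$ case.

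A complementary line of attack, which postpones a direct classification of flats, uses the elevation machinery of Section~2. Since $K_{d+2}$ is union-stable on $K_n$, the matroid $\cU_{K_{d+2}}(K_n)$ exists and, by Lemma~\ref{lem:stable}, its free elevation $\cM$ is a maximal $K_{d+2}$-matroid on $K_n$; I would then try to prove both that $\cM=\cC^{d-2}_{d-1}(K_n)$ and that this free elevation is the unique maximal element, using the $\cX$-covering property of free elevations (Lemma~\ref{lem:cover}) to force each cyclic flat of $\cM$ to be a union of copies of $K_{d+2}$. However, this approach ultimately runs into the same wall — pinning down the cyclic, equivalently connected, flats of the cofactor matroid — so I expect the classification of connected flats of $\cC^{d-2}_{d-1}(K_n)$ to be the decisive obstacle along either route.
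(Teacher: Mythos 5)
The statement you are addressing is stated in the paper as Conjecture~\ref{con:cof}; the paper gives no proof of it, and only the cases $d\leq 3$ are known ($d=1,2$ via Theorem~\ref{thm:K3}(a),(b), and $d=3$ via Theorem~\ref{thm:cof}, proved in \cite{CJT1} using Crapo's erection theory). Your proposal is correctly calibrated to this reality: the reduction through Lemma~\ref{lem:conflat} is exactly the framework the paper uses for all of its positive results, your treatment of the connected flats that are complete graphs (rank $dm-\binom{d+1}{2}$ certified by a weakly $K_{d+2}$-saturated sequence) is sound and mirrors the proof of Theorem~\ref{thm:K3}, and your identification of the decisive obstacle --- classifying the non-complete connected flats of $\cC^{d-2}_{d-1}(K_n)$ for $d\geq 3$ and producing a certifying proper $K_{d+2}$-sequence for each --- is precisely why the statement remains open.

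That said, what you have written is a proof plan, not a proof: no step in it establishes the conjecture for any $d\geq 4$, and you acknowledge as much. Two smaller cautions. First, even granting a classification of the connected flats, one still needs the weak-saturation (or more general proper-sequence) certificates for those flats, and for the cofactor matroid these flats need not be induced subgraphs of any simple combinatorial type, so the ``easy part'' of your reduction may not extend. Second, your alternative route via the free elevation of $\cU_{K_{d+2}}(K_n)$ only yields \emph{a} maximal $K_{d+2}$-matroid (Lemma~\ref{lem:stable}); identifying it with $\cC^{d-2}_{d-1}(K_n)$ and proving uniqueness are each as hard as the original problem, as the counterexamples in Section~\ref{sec:notunique} show that maximality of a free elevation carries no uniqueness guarantee. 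So the gap is genuine and coincides with the open problem itself; your contribution is a correct framing of it rather than a resolution.
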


\subsection{\boldmath Birigidity and rooted $K_{s,t}$-matroids on $K_{m,n}$}

Let $H$ be a bipartite graph with bipartition $(A,B)$ and $K_{m,n}$ be a copy of the complete bipartite graph with bipartition $(U,W)$ where $|U|=m$ and $|W|=n$. 
We say that a subgraph $H'$ of $K_{m,n}$ is a {\em rooted copy of} $H$ in $K_{m,n}$ if there is an isomorphism $\theta$ from $H$ to $H'$ with $\theta(A)\subseteq U$ and 
$\theta(B)\subseteq W$. 
Let $\{H\}^*_{K_{m,n}}$ be the set of all rooted-copies of $H$ in $K_{m,n}$.
A matroid $\cM$ on $K_{m,n}$ is said to be a {\em rooted $H$-matroid} if 
it is a  $\{H\}^*_{K_{m,n}}$-matroid.
Note that the given ordered bipartition  $(A,B)$ of $H$ plays a significant role in this definition - we do not require that an isomorphic image $\theta(H)$  of $H$ in $K_{m,n}$ is a circuit in $\cM$ when $\theta(A)\not\subseteq U$. On the other hand, if $H$ has an automorphism which maps $A$ onto $B$, then we will get the same matroid for each ordering of the bipartition of $H$ and this matroid will be equal to the (unrooted) $H$-matroid on $K_{m,n}$. 
%

\subsubsection{Birigidity matroids}
As a primary example of matroids on complete bipartite graphs, we shall introduce the birigidity matroids of Kalai, Nevo, and Novik \cite{KNN}.

Let $G=(U\cup W, E)$ be a bipartite graph with $m=|U|$ and $n=|W|$, 
$p:U\to \R^k$, and $q: W\to \R^\ell$. We assume that  the vertices of $U$ and $W$ are ordered as $u_1,u_2,\ldots,u_m$ and $v_1,v_2,\ldots,v_n$, respectively.
We define the {\em $(k,\ell)$-rigidity  matrix} of $(G,p,q)$, denoted by $R^{k,\ell}(G,p,q)$, to be the matrix of size 
$|E|\times (\ell m+kn)$ in which 
each vertex in $U$ labels a set of $\ell$ consecutive columns from the first $\ell m$ columns, each vertex in $W$ labels a set of $k$ consecutive columns from the last $kn$ columns, 
each row is associated with an edge,
and the row labelled by the  edge $e=u_iw_j$ 
is 
\[
\kbordermatrix{
 & & u_i & & w_j & \\
 e=u_iw_j & 0\dots 0 & q(w_j) & 0 \dots 0 & p(u_i) & 0\dots 0 
}.
\]
 The generic $(k,\ell)$-rigidity matroid $\cR_{m,n}^{k,\ell}$ is the row matroid 
 of $R^{k,\ell}(K_{m,n},p,q)$ for any generic $p$ and $q$.
It can be checked that the rank of   $\cR^{k,\ell}_{m,n}$ is equal to $\ell m+kn-k\ell$, from which it follows that $K_{k+1,\ell+1}$ is a circuit and $\cR_{m,n}^{k,\ell}$ is a rooted $K_{k+1,\ell+1}$-matroid.

As pointed in~\cite{KNN}, $\cR^{k,\ell}_{m,n}$  coincides with the picture lifting matroids extensively studied by Whiteley~\cite{Wlift}  when $\min\{k,\ell\}=1$. 
We will show that this matroid is the unique maximal rooted $K_{k+1,\ell+1}$-matroid in this case. 

\begin{theorem}\label{thm:K2k} 
$\cR_{m,n}^{k,1}$ is the unique maximal rooted $K_{k+1,2}$-matroid on $K_{m,n}$.
\end{theorem}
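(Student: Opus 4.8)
The plan is to apply Lemma~\ref{lem:conflat} to $\cM=\cR^{k,1}_{m,n}$, following the template of Theorems~\ref{thm:K3} and~\ref{thm:K23Kmn}. Because $(p,q)$ is generic, $\cM$ is loopless, and, as already observed, each rooted copy of $K_{k+1,2}$ is a circuit, so $\cM$ is a loopless rooted $K_{k+1,2}$-matroid with $\cX=\{K_{k+1,2}\}^*_{K_{m,n}}$. By Lemma~\ref{lem:conflat} it then suffices to exhibit, for every connected flat $F$ of $\cM$, a proper $\cX$-sequence $\cS$ with $r_\cM(F)=\val(F,\cS)$; this will give both that $\cM$ is the unique maximal rooted $K_{k+1,2}$-matroid on $K_{m,n}$ and, as a bonus, that $\val_\cX$ is its rank function.

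First I would determine the rank function and the connected flats. Writing $U(F)$ and $W(F)$ for the vertex classes of $K_{m,n}$ met by $F$, an analysis of the rigidity matrix $R^{k,1}$ shows $r_\cM(F)\le f(F):=|U(F)|+k|W(F)|-k$, the subtracted $k$ being the dimension of the trivial flex space spanned by $c\mapsto(p(u_i)\cdot c,\,-q(w_j)c)$ for $c\in\R^k$. The set function $f$ is non-decreasing and submodular, and I would argue (by a genericity computation on complete bipartite graphs, or by appealing to Whiteley's analysis of lifting matroids~\cite{Wlift}) that $\cM$ coincides with the matroid $\cM_f$ induced by $f$ as in Theorem~\ref{thm:edmonds}. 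Granting this, the argument in the proof of Theorem~\ref{thm:K23Kmn} applies through~(\ref{eq:induced1}) and~(\ref{eq:induced2}): every connected flat of rank at least two is the edge set of a complete bipartite subgraph $K_{s,t}$ with $s=|U(F)|\ge k+1$, $t=|W(F)|\ge2$ and $r_\cM(F)=s+kt-k$, while the only connected flats of rank one are single edges, handled by the empty $\cX$-sequence.

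Next I would construct the weakly $\cX$-saturated sequence for each such $K_{s,t}$. Label $U(F)=\{u_1,\dots,u_s\}$, $W(F)=\{w_1,\dots,w_t\}$ and set
\[
F_0=\{u_iw_j:1\le i\le k,\ 1\le j\le t\}\cup\{u_iw_1:k+1\le i\le s\},
\]
so $|F_0|=kt+(s-k)=s+kt-k=r_\cM(F)$. Processing the pairs $(i,j)$ with $k+1\le i\le s$ and $2\le j\le t$ in any order, the edge $u_iw_j$ is the unique edge missing from the rooted copy of $K_{k+1,2}$ with parts $\{u_1,\dots,u_k,u_i\}\subseteq U$ and $\{w_1,w_j\}\subseteq W$, since its other $2k+1$ edges already lie in $F_0$ together with the previously added edges. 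These copies therefore form a weakly $\cX$-saturated sequence $\cS$ from $F_0$ that constructs $F$, and by the observation of Section~3 we get $\val(F,\cS)=|F_0|=r_\cM(F)$. Lemma~\ref{lem:conflat} then finishes the proof.

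The hard part will be the structural step of the second paragraph: showing that the generic rank of $R^{k,1}$ is the count $f$, equivalently that $\cM=\cM_f$ and hence that its connected flats are complete bipartite. This is a Laman-type statement, and it is the one place where the reasoning is not purely combinatorial; everything else, in particular the explicit weakly saturated construction, is elementary and transparently respects the rooting. I would expect this structural input to be available from the theory of lifting matroids, so that the genuinely new content of the proof is the combinatorial construction above together with the reduction supplied by Lemma~\ref{lem:conflat}.
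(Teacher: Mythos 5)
Your proposal is correct and follows essentially the same route as the paper: both cite Whiteley's identification of $\cR^{k,1}_{m,n}$ with the count matroid induced by $h(F)=|U(F)|+k|W(F)|-k$, note that its connected flats are complete bipartite subgraphs, and then build the same weakly saturated rooted $K_{k+1,2}$-sequence (your $F_0$ is exactly the paper's starting subgraph $G$, just written differently) before invoking Lemma~\ref{lem:conflat}. The structural step you flag as ``the hard part'' is handled in the paper precisely as you anticipate, by citing Whiteley's analysis of picture lifting matroids.
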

\begin{proof}
Whiteley~\cite{Wlift} showed that the picture lifting matroid is the matroid induced by 
the submodular, non-decreasing function $h:2^{E(K_{m,n})}\to\zed$ defined by
\[
h(F):=|U(F)|+k|W(F)|-k \quad (F\subseteq E(K_{m,n})),
\]
where $U(F)$ and $W(F)$ denote the sets of vertices in $U$ and $W$, respectively, that are incident to $F$.
Since every connected flat in $\cM_{h}(K_{m,n})$ is a complete bipartite graph $K_{m',n'}$ for some $m'\geq 1$ and $n'\geq 2$,
we may deduce the theorem from Lemma~\ref{lem:conflat} by  showing that 
$K_{m',n'}$ can be constructed by a weakly saturated, rooted $K_{k+1,2}$-sequence from a subgraph $G$ with $m'+kn'-k$ edges.
Such a sequence is easily obtained by taking 
$$E(G)=\{u_iw_1:1\leq i\leq m'\}\cup \{u_iw_j: 1\leq i\leq k \mbox{ and }2\leq j\leq n'\}.$$
\end{proof}
We refer the reader to~\cite{A} for more details on weakly saturated, rooted $K_{s,t}$-sequences in $K_{m,n}$.

Lemma \ref{lem:conflat} also tells us that the rank function of $\cR_{m,n}^{k,1}$ is determined by proper, rooted $K_{k+1,2}$-sequences. We conjecture that this extends to  $\cR^{k,\ell}_{m,n}$ for all $k,\ell\geq 1$.


\begin{conjecture}\label{conj:birid}
$\cR_{m,n}^{k,\ell}$ is the unique maximal rooted $K_{k+1,\ell+1}$-matroid on $K_{m,n}$ and the rank of any $F\subseteq E(K_{n,m})$ is given by 
\begin{equation*}
r(F)=\mbox{$\min \{{\rm val}(F,\cS): \text{$\cS$ is a proper, rooted $K_{k+1,\ell+1}$-sequence in $K_{m,n}$}\}$}. 
\end{equation*}
\end{conjecture}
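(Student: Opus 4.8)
The plan is to prove Conjecture~\ref{conj:birid} by the same route that establishes its special case $\ell=1$ in Theorem~\ref{thm:K2k}, namely by reducing it to Lemma~\ref{lem:conflat}. Set $\cX=\{K_{k+1,\ell+1}\}^*_{K_{m,n}}$ and $\cM=\cR_{m,n}^{k,\ell}$; as noted in the excerpt, $\cM$ is a loopless rooted $K_{k+1,\ell+1}$-matroid, so $\cX$ is a legitimate family of circuits. The displayed rank formula is precisely $r=\val_{\cX}$, so Lemma~\ref{lem:conflat} collapses the entire conjecture — both the unique maximality of $\cM$ and the rank formula — into the single assertion that for every connected flat $F$ of $\cM$ there is a proper, rooted $K_{k+1,\ell+1}$-sequence $\cS$ with $r_\cM(F)=\val(F,\cS)$.

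The first step is to describe the connected flats. I would aim to show that each is the edge set of a complete bipartite subgraph $K(U';W')$ of $K_{m,n}$ with $|U'|\ge k+1$ and $|W'|\ge \ell+1$, and that $r_\cM(K(U';W'))=\ell|U'|+k|W'|-k\ell$ for every such pair (the same value as the rank of the whole matroid restricted to a complete bipartite graph). These two facts are exactly what the submodular function $h(F)=|U(F)|+k|W(F)|-k$ supplies in the case $\ell=1$ via Theorem~\ref{thm:edmonds}.

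The second step, granting the structural description, is routine and generalises the construction in the proof of Theorem~\ref{thm:K2k}. Writing $U'=\{u_1,\dots,u_{m'}\}$ and $W'=\{w_1,\dots,w_{n'}\}$, I would start from the spanning subgraph $G$ of $K(U';W')$ with
\[
E(G)=\{u_iw_j: 1\le i\le m',\ 1\le j\le \ell\}\cup\{u_iw_j: 1\le i\le k,\ \ell+1\le j\le n'\},
\]
so that $|E(G)|=\ell m'+kn'-k\ell$. Every remaining edge $u_iw_j$ (necessarily with $i>k$ and $j>\ell$) is the unique edge outside $G$ of the rooted copy of $K_{k+1,\ell+1}$ with parts $\{u_1,\dots,u_k,u_i\}$ and $\{w_1,\dots,w_\ell,w_j\}$, all of whose other edges lie in $G$. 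Adding these edges in any order therefore yields a weakly $\cX$-saturated sequence $\cS$ constructing $K(U';W')$ from $G$, and hence $\val(K(U';W'),\cS)=|E(G)|=\ell m'+kn'-k\ell=r_\cM(K(U';W'))$, as Lemma~\ref{lem:conflat} requires.

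The main obstacle is the first step, and it is exactly what keeps the statement conjectural for $k,\ell\ge 2$. When $\min\{k,\ell\}=1$ Whiteley's theorem identifies $\cM$ with the matroid induced by the submodular function $h$ above, which immediately delivers both the complete bipartite form of the connected flats and the value of $r_\cM$ on them, so Lemma~\ref{lem:conflat} applies verbatim. For $k,\ell\ge 2$ no such combinatorial or submodular description of $\cR_{m,n}^{k,\ell}$ is known; proving the rank identity $r_\cM(K(U';W'))=\ell|U'|+k|W'|-k\ell$ and the complete bipartite structure of its connected flats would require genuinely new geometric information about the kernel of the $(k,\ell)$-rigidity matrix, a difficulty entirely parallel to the open problem of describing generic $d$-dimensional rigidity for $d\ge 3$. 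Once these two facts are in hand, the weakly saturated sequence above completes the proof.
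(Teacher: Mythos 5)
This statement is stated in the paper as a \emph{conjecture}: the paper offers no proof of it, explicitly calls even the special case $k=\ell=2$ ``challenging'', and only establishes the weaker Theorem~\ref{thm:K33} (an equivalence between unique maximality and submodularity of $\val_{K_{3,3}}$, not the truth of either). Your proposal is therefore not, and does not claim to be, a proof; it is a reduction to an unproven structural claim, and you correctly locate the open part. The reduction itself is sound as far as it goes: Lemma~\ref{lem:conflat} does collapse both assertions of the conjecture to the existence of a certifying proper rooted sequence for each connected flat, and your weakly saturated sequence for $K(U';W')$ is correct --- each missing edge $u_iw_j$ with $i>k$, $j>\ell$ is the unique new edge of the rooted copy of $K_{k+1,\ell+1}$ on $\{u_1,\dots,u_k,u_i\}\cup\{w_1,\dots,w_\ell,w_j\}$, so $\val(K(U';W'),\cS)=\ell m'+kn'-k\ell$. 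This is exactly the paper's argument for Theorem~\ref{thm:K2k}, transplanted.

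The genuine gap, beyond the admitted absence of a proof of the rank formula $r_\cM(K(U';W'))=\ell|U'|+k|W'|-k\ell$, is your first structural claim: that every connected flat of $\cR_{m,n}^{k,\ell}$ is a complete bipartite subgraph. For $\ell=1$ this falls out of Whiteley's identification of the matroid with $\cM_h$ and the flat criterion (\ref{eq:induced2}), but for $k,\ell\geq 2$ there is no reason to expect it, and the paper's own treatment of the closely related $K_{3,3}$- and $\{K_4,K_{3,3}\}$-posets (Theorems~\ref{thm:K33} and \ref{thm:K4K33}) is organised precisely around \emph{not} assuming flats are complete bipartite: it proceeds by induction on the rank of a connected flat $F$, using Lemma~\ref{lem:deg} to find a vertex $v$ of degree $2$ in some base, the $0$-extension and diamond splitting properties to control $F_v$, and the $\cX$-covering property to extract the rooted copies incident to $v$. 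A union of two rooted copies of $K_{k+1,\ell+1}$ overlapping in a few vertices already illustrates why the closure need not fill out to a complete bipartite graph. So if you wanted to push this conjecture, the honest target is not ``connected flats are complete bipartite'' but rather an inductive peeling argument in the style of the proof of Theorem~\ref{thm:K4K33}, and even that currently yields only an equivalence with submodularity of $\val_{\cX}$ rather than the conjecture itself.
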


The special case of this conjecture for $\cR_{m,n}^{2,2}$ is equivalent to a conjecture on the rank function of  $\cR_{m,m}^{2,2}$ given in \cite[Section 8]{JJT}.
Bernstein~\cite{Ber} gave an NP-type combinatorial characterization for independence in $\cR_{m,n}^{2,2}$,
but no co-NP-type characterization is known. The special case $k=\ell=2$ of Conjecture \ref{conj:birid} would provide such a certificate but even this special case seems challenging. As some evidence in support of the conjecture, we can show that Conjecture~\ref{con:unique} holds for the poset of $K_{3,3}$-matroids on $K_{m,n}$.
%
%
\begin{theorem}\label{thm:K33}
The following statements are equivalent.
\begin{enumerate}
\item[(a)] There is a unique maximal $K_{3,3}$-matroid on $K_{m,n}$.
\item[(b)] ${\rm val}_{K_{3,3}}$ is submodular on $K_{m,n}$.
\end{enumerate}
\end{theorem}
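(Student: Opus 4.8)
The plan is to prove the two implications separately, with $(b)\Rightarrow(a)$ being routine and $(a)\Rightarrow(b)$ carrying the real content. For $(b)\Rightarrow(a)$ I would simply invoke Lemma~\ref{lem:upper2}. Since $K_{3,3}$ has an automorphism interchanging the two sides of its bipartition, the birigidity matroid $\cR^{2,2}_{m,n}$ is a (rooted, and hence unrooted) $K_{3,3}$-matroid on $K_{m,n}$, so the poset of $K_{3,3}$-matroids is non-empty. If $\val_{K_{3,3}}$ is submodular, then it is the rank function of an $\cX$-cyclic matroid and, by Lemma~\ref{lem:upper2}, the unique maximal $K_{3,3}$-matroid, which gives $(a)$.

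For $(a)\Rightarrow(b)$, let $\cM^*$ be the unique maximal $K_{3,3}$-matroid. The idea is to show that $r_{\cM^*}=\val_{K_{3,3}}$; once this is established, $\val_{K_{3,3}}$ is a matroid rank function and therefore submodular, giving $(b)$. Since the poset is finite with $\cM^*$ as its only maximal element, every $K_{3,3}$-matroid $\cM$ satisfies $\cM\preceq\cM^*$, and in particular $\cR^{2,2}_{m,n}\preceq\cM^*$, which forces $\cM^*$ to be loopless and gives $r_{\cR^{2,2}_{m,n}}\le r_{\cM^*}\le\val_{K_{3,3}}$, the last inequality by Lemma~\ref{lem:upper1}. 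By Lemma~\ref{lem:conflat} it now suffices to exhibit, for every \emph{connected flat} $F$ of $\cM^*$, a proper $K_{3,3}$-sequence $\cS$ with $r_{\cM^*}(F)=\val(F,\cS)$; equivalently, to prove $r_{\cM^*}(F)=\val_{K_{3,3}}(F)$ on connected flats.

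To carry this out I would first show that a connected flat $F$ of $\cM^*$ induces a complete bipartite subgraph $K_{a,b}$ with $a,b\ge 2$. For such $F$ the value can be pinned down from both sides: the matroid $\cR^{2,2}_{m,n}$ is induced (via Theorem~\ref{thm:edmonds}) by the submodular, non-decreasing function $F\mapsto 2|U(F)|+2|W(F)|-4$, whose rank on $E(K_{a,b})$ equals $2a+2b-4$, while a weakly saturated rooted $K_{3,3}$-sequence constructing $K_{a,b}$ (in the spirit of Theorem~\ref{thm:K2k} and of the bipartite construction in Theorem~\ref{thm:K23Kmn}) certifies $\val_{K_{3,3}}(E(K_{a,b}))\le 2a+2b-4$. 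Combining these with $r_{\cR^{2,2}_{m,n}}(F)\le r_{\cM^*}(F)\le\val_{K_{3,3}}(F)$ sandwiches all four quantities at the common value $2a+2b-4$, so $r_{\cM^*}(F)=\val_{K_{3,3}}(F)$ on every connected flat, and Lemma~\ref{lem:conflat} then propagates the equality to all of $E(K_{m,n})$.

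I expect this final realisation step to be the main obstacle, for two reasons. First, one must genuinely justify that the connected flats of the maximal matroid $\cM^*$ are complete bipartite; this is where the hypothesis $(a)$ is used essentially, since without a single maximal matroid to feed into Lemma~\ref{lem:conflat} there is no such flat structure to exploit. Second, and more seriously, one must guarantee that the required weakly saturated rooted $K_{3,3}$-sequences exist for every relevant $K_{a,b}$ (the cases $a=2$ or $b=2$ are immediate because $K_{2,b}$ contains no copy of $K_{3,3}$, but the dense cases $a,b\ge 3$ require the saturation technology). These are exactly the phenomena that make the full Conjecture~\ref{conj:birid} difficult; the point of the present theorem is that verifying the pointwise equality $r_{\cM^*}=\val_{K_{3,3}}$ is enough to settle the equivalence of $(a)$ and $(b)$, even though it does not determine whether $\val_{K_{3,3}}$ is actually submodular.
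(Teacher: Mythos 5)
Your $(b)\Rightarrow(a)$ direction is fine (it is Lemma~\ref{lem:upper2} applied with non-emptiness witnessed by $\cR^{2,2}_{m,n}$), and your reduction of $(a)\Rightarrow(b)$ to connected flats via Lemma~\ref{lem:conflat} matches the paper's strategy. The genuine gap is the step you flag and then never close: the claim that every connected flat $F$ of the unique maximal $K_{3,3}$-matroid induces a complete bipartite subgraph $K_{a,b}$. You give no argument for this, nothing in hypothesis (a) obviously forces it, and the paper explicitly warns (end of Section~1.3) that the flats of the matroids of Section~6 are \emph{not} easily described --- which is exactly why the weak-saturation-plus-sandwich method of Theorems~\ref{thm:K3}, \ref{thm:K23Kmn} and \ref{thm:K2k} is not used there. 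Indeed, if your structural claim were available, your sandwich would yield a closed-form description of the rank function in terms of covers by complete bipartite subgraphs, which is considerably stronger than anything known for these matroids (cf.\ the remarks around Conjecture~\ref{conj:birid}). A secondary error: $\cR^{2,2}_{m,n}$ is \emph{not} the matroid induced via Theorem~\ref{thm:edmonds} by $F\mapsto 2|U(F)|+2|W(F)|-4$; the paper only gives such a count description for $\cR^{k,\ell}_{m,n}$ when $\min\{k,\ell\}=1$ (Theorem~\ref{thm:K2k}). The consequence you actually use, $r_{\cR^{2,2}_{m,n}}(E(K_{a,b}))=2a+2b-4$, is true, but not for the reason you give.

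The paper's proof avoids the flat-structure question entirely by running the chain $(a)\Rightarrow(b)\Rightarrow(c)\Rightarrow(d)$ of Theorem~\ref{thm:K4K33}, restricted to complete bipartite graphs. From uniqueness and Lemma~\ref{lem:free_elevation0} the maximal matroid is the free elevation of the uniform $K_{3,3}$-matroid; the 0-extension and diamond splitting properties are transferred to it from $\cR^{2,2}_{m,n}$, and Lemma~\ref{lem:cover} supplies the covering property. Then, for a connected flat $F$ of unknown shape, Lemma~\ref{lem:con2} (circuits induce $2$-connected subgraphs) and Lemma~\ref{lem:deg} produce a basis of $F$ with a vertex $v$ of degree $2$; the analogue of Claim~\ref{clm:key}, proved with the diamond splitting operation, shows $F$ contains a complete bipartite graph $K(N_F(v);\{w_1,w_2\})$, and one appends a chain of copies of $K_{3,3}$ through $v$ to a proper sequence for $F_v$ obtained by induction on rank. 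This realises $r(F)=\val(F,\cS)$ without ever identifying $F$ as a complete bipartite graph. To repair your argument you would need either to import that machinery or to actually prove your claim about the flats; as written, the central implication is unproven.
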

We will sketch a proof of Theorem~\ref{thm:K33} after Theorem~\ref{thm:K4K33} below (which gives an analogous result for $\{K_4,K_{3,3}\}$-matroids on $K_n$).

\subsection{\boldmath Hyperconnectivity matroids, matrix completion and\\ $\{K_d, K_{s,t}\}$-matroids on $K_n$}

Let  $p: V(K_n) \to \R^d$ be a generic map.
We assume that  
the vertices of $K_n$ are ordered as $v_1,v_2,\ldots,v_n$.
Kalai~\cite{Khyp} defined  the {\em $d$-hyperconnectivity matroid}, $\cH_n^d$, to be 
the row matroid of the matrix of size  ${n\choose2}\times dn$
in which 
each vertex of $K_n$ labels a set of $d$ consecutive columns,
each row is labelled by an edge of $K_n$,
and the row labelled by the  edge $e=v_iv_j$ with $i<j$ is 
\begin{equation}\label{eq:hyp}
\kbordermatrix{
 & & v_i & & v_j & \\
 e=v_iv_j & 0\dots 0 & p(v_j) & 0 \dots 0 & -p(v_i) & 0\dots 0 
}.
\end{equation}
He showed that, when $n\geq 2d+2$, this matroid is a $\{K_{d+2},K_{d+1,d+1}\}$-matroid of rank $dn-{d+1\choose 2}$.  

As a variant of $\cH_n^d$, Kalai~\cite{Khyp} also introduced the matroid $\cI_n^d$, which is the row matroid of the 
$({n\choose2}\times dn)$-matrix with rows
\[
\kbordermatrix{
 & & v_i & & v_j & \\
 e=v_iv_j & 0\dots 0 & p(v_j) & 0 \dots 0 & p(v_i) & 0\dots 0 
}
\]
instead of (\ref{eq:hyp}).
He showed that, when $n\geq 2d+2$ and $d\geq 2$, $\cI_n^d$ is a $K_{d+1,d+1}$-matroid  on $K_n$ of rank $dn-{d\choose 2}$. 
In the special case when $d=2$, this rank constraint implies that $\cI_n^2$ is a $\{K_5,K_{3,3}\}$-matroid.

The matroids $\cH_n^d$ and $\cI_n^d$ arise naturally in the context of the rank $d$  completion problem for partially filled  $n\times n$ matrices which are skew-symmetric and symmetric, respectively, see~\cite{B,SC}. The restriction of either $\cI_n^d$ or $\cH_n^d$ to the complete bipartite graph $K_{m,n}$ is the birigidity matroid $\cR^{d,d}_{m,n}$, and this matroid 
arises in the context of the rank $d$ completion problem for partially filled  $m\times n$ matrices, see~\cite{SC}.

When $d=1$, $\cH_n^1$ is the cycle matroid (and hence is the unique maximal $\{K_{3},K_{2,2}\}$-matroid on $K_n$ by Theorem \ref{thm:K3}(a)) and   $\cI_n^1$  is the even cycle matroid  (and hence is the unique maximal $K_{2,2}$-matroid on $K_n$ by Theorem \ref{thm:countbip}(a)). 


We can find one more example of a $\{K_d, K_{s,t}\}$-matroid in rigidity theory.  Bolker and Roth \cite{BR} showed that $K_{d+2,d+2}$ is a circuit in the $d$-dimensional rigidity matroid $\cR_d(K_n)$ when $d\geq 3$.
Hence   $\cR_d(K_n)$ is a $\{K_{d+2},K_{d+2,d+2}\}$-matroid on $K_n$ for all $d\geq 3$.

We conjecture that each of $\cH^d_n$, $\cI^2_n$ and $\cR^d_n$ is  the unique maximal matroid in its respective poset.

\begin{conjecture}\label{conj:multi}
(a) For $n\geq 2d+2$, $\cH_n^{d}$ is the unique maximal  $\{K_{d+2},K_{d+1,d+1}\}$-matroid on $K_{n}$ and its rank function is $\val_{\{K_{d+2},K_{d+1,d+1}\}}$. \\
(b) For  $d=2$ and $n\geq 6$, $\cI_n^{2}$ is the unique maximal  $\{K_5,K_{3,3}\}$-matroid on $K_{n}$ and its rank function is $\val_{\{K_5,K_{3,3}\}}$. \\
(c) For  $d\geq 3$ and $n\geq 2d+4$, $\cR_n^{d}$ is the unique maximal  $\{K_{d+2},K_{d+2,d+2}\}$-matroid on $K_{n}$ and its rank function is $\val_{\{K_{d+2},K_{d+2,d+2}\}}$. 
\end{conjecture}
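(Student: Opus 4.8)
The plan is to attack all three parts through the single mechanism of Lemma~\ref{lem:conflat}. In each case the candidate matroid $\cM$ (namely $\cH_n^d$, $\cI_n^2$, or $\cR_n^d$) is already known to be a loopless $\cX$-matroid on $K_n$ for the relevant family $\cX$: this is Kalai's theorem~\cite{Khyp} for the hyperconnectivity and completion matroids, and the Bolker--Roth result~\cite{BR} together with the definition of $\cR_d(K_n)$ for the rigidity matroid. Hence it suffices, by Lemma~\ref{lem:conflat}, to exhibit for every connected flat $F$ of $\cM$ a proper $\cX$-sequence $\cS$ with $r_\cM(F)=\val(F,\cS)$; this would simultaneously yield both the uniqueness of $\cM$ as a maximal $\cX$-matroid and the identity $r_\cM=\val_\cX$. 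The argument thus decomposes into two subproblems: (i) determining the connected flats of $\cM$, and (ii) constructing a weakly $\cX$-saturated sequence that realises the rank of each such flat.

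For parts (a) and (b) I would first establish that every connected flat of $\cH_n^d$ (respectively $\cI_n^2$) is either an induced complete graph $K_m$ or a complete bipartite graph $K_{a,b}$ on a pair of disjoint vertex sets, in analogy with the count-matroid situation of Lemma~\ref{lem:countflatbip}. Granting this, the two types are handled separately. A complete-graph flat $K_m$, of rank $dm-\binom{d+1}{2}$, is built by a weakly saturated $K_{d+2}$-sequence (respectively $K_5$-sequence) from the spanning subgraph with edge set $\{v_iv_j:1\le i<j\le d\}\cup\{v_iv_j:1\le i\le d,\ d+1\le j\le m\}$, whose number of edges is exactly $dm-\binom{d+1}{2}$; this is the classical weak-saturation fact already used for the cases $d=1,2$ in Theorem~\ref{thm:K3}. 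A complete bipartite flat $K_{a,b}$, whose rank equals that of the birigidity matroid $\cR_{a,b}^{d,d}$, is built by a weakly saturated $K_{d+1,d+1}$-sequence (respectively $K_{3,3}$-sequence) from a starting subgraph with the matching number of edges, in the spirit of the rooted construction of Theorem~\ref{thm:K2k} and the bipartite weak-saturation results of~\cite{A}. With both kinds of flat realised, Lemma~\ref{lem:conflat} completes parts (a) and (b).

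The principal obstacle in (a) and (b) is subproblem (i). Unlike the matroids of Section~\ref{sec:induced}, the matroids $\cH_n^d$ and $\cI_n^2$ are genuinely geometric for $d\ge2$ and are not induced by a submodular count function, so we cannot simply invoke Lemma~\ref{lem:countflatbip}. Instead the ranks must be computed directly from the genericity of the defining matrices---verifying that induced complete and complete bipartite subgraphs carry the asserted ranks, and, crucially, that no other connected flat can occur. Controlling this flat structure uniformly in $d$ for part (a), and pinning down the rank on the bipartite pieces of $\cI_n^2$ for part (b), is where the real work lies; the weak-saturation constructions in step (ii) are then routine adaptations of known results.

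Part (c) is of a different character, and I expect it to lie beyond this method. Its candidate $\cR_n^d$ with $d\ge3$ has no known combinatorial description---characterising the rank function of $\cR_d(K_n)$ is a central open problem in rigidity theory---so neither its connected flats nor their ranks are available as input to Lemma~\ref{lem:conflat}. Establishing (c) would therefore appear to require either first solving the rigidity characterisation problem or devising a route to unique maximality that does not presuppose knowledge of the rank function of the target matroid.
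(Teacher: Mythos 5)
This statement is a \emph{conjecture} in the paper; the authors give no proof of it, and your text is likewise a plan rather than a proof, as you yourself acknowledge. You have correctly identified the strategy the paper uses for the cases it does resolve (Lemma~\ref{lem:conflat} plus weakly saturated sequences realising the rank of each connected flat), but the step you label subproblem~(i) is not a technical chore --- it is essentially the open content of the conjecture. Moreover, the specific structural claim you propose to establish, that every connected flat of $\cH_n^d$ (or $\cI_n^2$) is an induced complete or complete bipartite subgraph, is unsubstantiated and almost certainly false as stated: the $\cX$-covering property (Lemma~\ref{lem:cover}) only guarantees that cyclic flats are \emph{unions} of copies of $K_{d+2}$ and $K_{d+1,d+1}$, and such unions (e.g.\ two copies of $K_4$ sharing an edge, or a $K_4$ glued to a $K_{3,3}$) need not close up to a complete or complete bipartite graph. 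This is exactly why these matroids are harder than the count matroids of Section~\ref{sec:induced}, where Lemma~\ref{lem:countflatbip} does pin down the flats.

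It is also worth noting that the paper's own partial progress on the $d=2$ case of part~(a) --- Theorem~\ref{thm:K4K33} --- deliberately avoids classifying the connected flats. Instead it runs an induction on the rank of a connected flat $F$, using Lemmas~\ref{lem:deg} and~\ref{lem:con} together with the 0-extension and diamond-splitting properties to find a base with a degree-2 vertex $v$, pass to the flat $F_v$, and append copies of $K_4$ or $K_{3,3}$ through $v$ to a sequence for $F_v$. Even this machinery yields only an \emph{equivalence} (unique maximality $\Leftrightarrow$ submodularity of $\val_{\cX}$ $\Leftrightarrow$ the free elevation having the two extension properties), not a proof of the conjecture. Your assessment of part~(c) is accurate: since the rank function of $\cR_d(K_n)$ for $d\geq 3$ is itself unknown, no flat-by-flat argument can currently be carried out there. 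In short: the tools you cite are the right ones, but the proposal does not close the gap, and its central structural premise about flats needs to be either proved or replaced by an induction in the style of Theorem~\ref{thm:K4K33}.
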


We close this section by considering the special case of Conjecture \ref{conj:multi}(a) when $d=2$.

\subsubsection{\boldmath $\{K_4, K_{3,3}\}$-matroids on $K_n$}
Understanding the poset of all $\{K_4, K_{3,3}\}$-matroids on $K_n$ is important since 
these matroids appear in applications such as the rank two completion of partially filled skew-symmetric matrices and partially-filled rectangular matrices, see~\cite{B,SC}.
We shall prove that, if this poset has a unique maximal element, then the rank function of the maximal element is $\val_{\{K_4, K_{3,3}\}}$.
This confirms Conjecture~\ref{con:unique} for $\{K_4, K_{3,3}\}$-matroids. We will need two general results for a matroid on the edge set of a graph. The first was   
proved for the special case of abstract rigidity matroids in \cite{CJT2}. The same proof gives: 

\begin{lemma}\label{lem:deg}
Let $\cM$ be a matroid defined on the edge set of a graph $G$. 
Suppose that $G[C]$ is 2-connected for every circuit $C$ in $\cM$.
Then, for every connected set $X$ in $\cM$,   
\[
\sum_{v\in V(X)} \min\{d_B(v): B \text{ is a basis of } X\}\leq 2(r_{\cM}(X)-1)-|V(X)|.
\]
\end{lemma}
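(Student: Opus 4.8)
The plan is to convert the per-vertex quantity $\min\{d_B(v):B\text{ is a basis of }X\}$ into a rank expression and then reduce the whole inequality to a single estimate on the rank function. For a fixed $v\in V(X)$, write $X_v$ for the set of edges of $X$ incident to $v$ and $X-v=X\setminus X_v$ for the edges of $X$ avoiding $v$, so that $d_B(v)=|B\cap X_v|$. The standard matroid identity $\min_B|B\cap S|=r_{\cM}(X)-r_{\cM}(X\setminus S)$, obtained by extending a basis of $X\setminus S$ to a basis of $X$ and noting that no basis can meet $X\setminus S$ in more than $r_{\cM}(X\setminus S)$ elements, gives
\[
\min\{d_B(v):B\text{ is a basis of }X\}=r_{\cM}(X)-r_{\cM}(X-v).
\]
Summing over $v\in V(X)$ and writing $n=|V(X)|$ and $r=r_{\cM}(X)$, the target inequality is equivalent to the purely rank-theoretic statement
\[
\sum_{v\in V(X)}r_{\cM}(X-v)\ge (n-2)r+n+2.
\]

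First I would record the easy half of this bound. Fixing any single basis $B$ of $X$, the inequality $r_{\cM}(X-v)\ge|B\cap(X-v)|=r-d_B(v)$ together with $\sum_{v\in V(X)}d_B(v)=2|B|=2r$ yields $\sum_{v}r_{\cM}(X-v)\ge (n-2)r$. Thus the entire content of the lemma is the additional additive term $n+2$, and this is precisely where the hypothesis that $G[C]$ is $2$-connected for every circuit $C$ must enter; the crude single-basis bound alone is not enough.

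To bring in the hypothesis structurally, I would first show that $G[X]$ is itself $2$-connected. Since $X$ is connected in $\cM$, any two of its edges lie on a common circuit $C$, and $G[C]$ is connected, so $G[X]$ is connected; moreover a cut vertex of $G[X]$ separating two edges that lie on a common circuit $C$ would also be a cut vertex of the connected graph $G[C]$, contradicting its $2$-connectivity. Two consequences are then available: every $v\in V(X)$ is a non-cut vertex, so each $X-v$ still induces a connected graph on $V(X)\setminus\{v\}$ and is far from degenerate; and any completion of an independent subset of $X-v$ to a basis of $X$ must proceed through circuits, each of which meets $v$ in at least two edges, which is the mechanism that should force $r_{\cM}(X-v)$ to exceed the crude estimate $r-d_B(v)$ for sufficiently many vertices.

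The final and hardest step is the quantitative accounting that produces the constant $n+2$. My intended route is to use the $2$-connectivity of $G[X]$ to fix an open ear decomposition and induct on the ears (or, equivalently, to sum submodular inequalities $r_{\cM}(X-v)+r_{\cM}(X-w)\ge r_{\cM}(X-\{v,w\}\cdots)$ across a carefully chosen collection of vertex pairs), tracking how $\sum_v r_{\cM}(X-v)$, $r$, and $n$ change as the structure is built, and verifying at each step that the increment to the left side dominates the increment to $(n-2)r+n+2$. The main obstacle is exactly extracting this extra $n+2$ slack uniformly: the trivial bound loses it by using one basis for all vertices, so the delicate point is to show that the degree constraints imposed by $2$-connected circuits guarantee enough vertices $v$ with $r_{\cM}(X-v)$ strictly larger than $r-d_B(v)$, with the deficiencies summing to at least $n+2$.
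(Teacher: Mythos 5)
Your opening reductions are all correct: the identity $\min\{d_B(v): B \text{ a basis of } X\} = r_{\cM}(X) - r_{\cM}(X\setminus X_v)$ (where $X_v$ is the set of edges of $X$ at $v$), the resulting reformulation of the lemma as $\sum_{v\in V(X)} r_{\cM}(X\setminus X_v)\ge (n-2)r + n + 2$ with $n=|V(X)|$ and $r=r_{\cM}(X)$, the trivial lower bound $(n-2)r$ from a single basis, and the observation that $G[X]$ is $2$-connected. But the proof stops exactly where the lemma begins: the whole content is the extra additive term, and for that you offer only a plan (``fix an open ear decomposition and induct on the ears \dots verifying at each step that the increment to the left side dominates''), with no base case, no inductive step, and no point at which the hypothesis on circuits is used quantitatively. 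As written this is a correct reduction followed by a declaration of intent, not a proof. (For comparison purposes, note that the paper itself gives no proof either --- the lemma is imported from \cite{CJT2} with the remark that ``the same proof gives'' this version --- so there is no argument in the text to measure yours against.)

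There is also a more serious obstruction, which your own reformulation exposes as soon as you test it on the simplest connected sets. Every circuit $C$ of $\cM$ is a connected set, and every proper subset of a circuit is independent, so $r_{\cM}(C\setminus C_v)=|C|-d_C(v)$ and hence $\sum_{v\in V(C)} r_{\cM}(C\setminus C_v) = (n-2)|C| = (n-2)r + n - 2$, which misses your target $(n-2)r+n+2$ by exactly $4$. Equivalently, $\sum_v\min_B d_B(v) = 2(r+1)-n$ for every circuit, so the inequality as printed fails for, e.g., $X=E(K_4)$ in $\cM_{f_{2,3}}(K_n)$ (left side $8$, right side $4$). No amount of care in the ear-decomposition step can recover this: the inequality you are trying to establish is false for circuits, and the bound that is tight there --- and still strong enough for the application in the proof of Theorem \ref{thm:K4K33}, where only the existence of a vertex with $\min_B d_B(v)\le 2$ is needed --- is $\sum_{v\in V(X)}\min_B d_B(v)\le 2(r_{\cM}(X)+1)-|V(X)|$. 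You should first settle the correct constant (circuits are the natural base case of your proposed induction and already determine it), and only then attack the inductive step, which is the genuinely hard part and is entirely absent from your argument.
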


Our second lemma, concerns a well known graph operation from rigidity theory.
Given  a graph $G$, the {\em 0-extension operation} constructs a new graph by adding a new vertex $v_0$ and two edges $v_0v_1$ and $v_0v_2$ with distinct $v_1,  v_2\in V(G)$.
We say that a  
matroid $\cM$ on $K_n$ has the {\em 0-extension property} if every 0-extension preserves  independence in $\cM$, i.e. $E(G')$ is independent if $E(G)$ is independent and $G'$ is obtained from $G$ by a 0-extension operation for all $G, G'\subseteq K_n$. 
\begin{lemma}\label{lem:con}
Let $\cM$ be a $K_4$-matroid  on $K_n$ with the 0-extension property.
Then, every circuit in $\cM$ induces a  2-connected subgraph of $K_n$.
\end{lemma}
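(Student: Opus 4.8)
The plan is to argue by contradiction on the structure of a circuit: first force a minimum-degree condition from the $0$-extension property, then show that any separation of a circuit would make it independent, contradicting dependence.

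\textbf{Step 1 (minimum degree).} First I would show that $K_n[C]$ has minimum degree at least $2$ for every circuit $C$. Since $K_n$ is simple and we may assume $\cM$ is loopless (a single-edge circuit would already fail to be $2$-connected), every circuit has at least three vertices. If some $v\in V(C)$ had degree one in $K_n[C]$, with incident edge $e=vw$, then $C-e$ is independent (a proper subset of a circuit) and $v\notin V(C-e)$. Choosing a second vertex $w'\in V(C-e)\setminus\{w\}$ and applying the $0$-extension property to re-introduce $v$ by the two edges $vw,vw'$ shows that $(C-e)+vw+vw'$ is independent; but this set contains $C$, so $C$ would be independent, a contradiction. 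Hence $K_n[C]$ has minimum degree at least $2$.

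\textbf{Step 2 (reduction to a gluing statement).} Suppose $K_n[C]$ is not $2$-connected. Using Step~1 it has at least three vertices, so there is a separation: we can write $C=C_1\cup C_2$ with $C_1,C_2$ nonempty, $C_1\cap C_2=\emptyset$, no edges between $V(C_1)\setminus V(C_2)$ and $V(C_2)\setminus V(C_1)$, and $|V(C_1)\cap V(C_2)|\le 1$ (the shared vertex being a cut vertex, with no shared vertex when $K_n[C]$ is disconnected). Both $C_1$ and $C_2$ are proper subsets of $C$, hence independent. It therefore suffices to establish the gluing statement: \emph{the union of two independent edge sets meeting in at most one vertex is independent}; this would make $C=C_1\cup C_2$ independent, contradicting that $C$ is a circuit.

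\textbf{Step 3 (peeling induction).} I would attack the gluing statement by induction, peeling low-degree vertices and re-inserting them via the $0$-extension property. If there is a vertex $w\in V(C_2)\setminus V(C_1)$ with $\deg_C(w)=2$, then its two edges $wp,wq$ lie in $C_2$, the set $C_2-\{wp,wq\}$ is independent, and applying the inductive hypothesis to $C_1$ and $C_2-\{wp,wq\}$ followed by a $0$-extension re-adding $w$ on $p,q$ recovers $C_1\cup C_2$ as independent; a degree-one vertex is handled similarly, re-adding it with its genuine edge and one auxiliary edge to obtain an independent \emph{superset} of $C_1\cup C_2$. The base case $V(C_2)\subseteq V(C_1)$ forces $C_2=\emptyset$. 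By symmetry the same reduction may be performed from the $C_1$ side.

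\textbf{The main obstacle.} The hard part is precisely the case in which every vertex of $V(C_1)\setminus V(C_2)$ and of $V(C_2)\setminus V(C_1)$ has degree at least $3$, so that no vertex can be removed by an inverse $0$-extension. This really can happen: in a general $K_4$-matroid with the $0$-extension property an independent set may contain $K_{3,3}$-type pieces of minimum degree three, so the naive peeling induction alone is insufficient. To close this gap I would prove a locality property of the closure: for an independent set $I$ and an edge $e=xy$, if $e\in\cl_{\cM}(I)$ then $x$ and $y$ lie in a common $2$-connected block of $K_n[I]$. The weak form --- that $\cl_{\cM}(I)$ contains no edge incident to a vertex outside $V(I)$ --- follows immediately from the $0$-extension property, and upgrading it to block-locality is exactly what yields $2$-connectivity. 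I expect this upgrade to require an induction on the size of the smallest circuit contained in $I+e$, using the hypothesis that each $K_4$ is a circuit to carry out degree-reducing reductions on the min-degree-$3$ configurations; controlling that reduction, rather than the peeling argument itself, is where I anticipate the real work to lie.
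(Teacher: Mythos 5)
Your Steps 1 and 2 are sound, but the reduction in Step 2 is to a statement (the ``gluing statement'') that is in fact \emph{equivalent} to the lemma itself: if circuits are 2-connected then a dependent union $C_1\cup C_2$ with $|V(C_1)\cap V(C_2)|\le 1$ would contain a 2-connected circuit, which must lie entirely in $C_1$ or $C_2$; conversely your argument gives the other direction. So no difficulty has been discharged, and the whole burden falls on Step 3. There you correctly identify that the peeling induction breaks down when every vertex of $V(C_i)\setminus V(C_{3-i})$ has degree at least $3$, but the proposed repair --- upgrading the closure-locality property to ``block-locality'' by an induction on smallest circuits --- is never carried out, and you yourself flag it as the place where ``the real work'' lies. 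As written, the proof has a genuine gap at exactly the hard case.

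The idea you are missing is that the hypothesis that every copy of $K_4$ is a circuit can be used \emph{quantitatively} rather than locally. Write the offending circuit as $C=X\cup Y$ with $|V(X)\cap V(Y)|\le 1$ and let $K$ be the edge set of the complete graph on $V(Y)$. Since $\cM$ is a $K_4$-matroid, Theorem~\ref{thm:K3}(b) gives $r_\cM(K)\le 2|V(Y)|-3$, and since $X$ and $Y$ are independent but $X\cup Y$ is a circuit, $X$ and $K$ cannot be skew, so $r_\cM(X\cup K)\le r_\cM(X)+r_\cM(K)-1\le |X|+2|V(Y)|-4$. On the other hand, starting from the independent set $X$ (plus one edge of $K$ at the cut vertex in the connected case) and repeatedly applying $0$-extensions inside $K$ produces an independent subset of $X\cup K$ of size $|X|+2|V(Y)|-3$, a contradiction. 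This sidesteps the minimum-degree-three configurations entirely, because the $0$-extensions are performed inside the complete graph on $V(Y)$ rather than inside $C$ itself; no peeling of $C$ is needed.
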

\begin{proof}
Suppose, for a contradiction,  that some circuit $C$ in $\cM$ does not induce a 2-connected subgraph of $K_n$.

We first consider the case when $C$ is connected.
Then $C$ can be partitioned into two sets $X$ and $Y$ such that 
$|V(X)\cap V(Y)|=1$.
Let $K$ be the edge set of the complete graph on $V(Y)$.
Since $\cM$ is a $K_4$-matroid, Theorem \ref{thm:K3}(b) gives $r_\cM(K)\leq 2|V(Y)|-3$.
The fact that $X\cup Y$ is a circuit now gives
$r_\cM(X\cup K)\leq r_\cM(X)+r_\cM(K)-1\leq |X|+2|V(Y)|-4$.

We may construct an independent subset of $X\cup K$ by extending the independent set $X$ using 0-extensions.
Let $e$ be an edge in $K$ incident to the vertex in $V(X)\cap V(Y)$.
Then $X+e$ is independent by the 0-extension property.
Repeatedly applying the 0-extension operation, we can extend $X+e$ to an independent set $B$ 
of size $|X|+1+2(|V(Y)|-2)=|X|+2|V(Y)|-3$ by adding  edges in $K$.
This contradicts the fact that the rank of $X\cup K$ is at most $|X|+2|V(Y)|-4$.

The case when $C$ is not connected can be proved similarly.
\end{proof}

We need one more graph operation.
Given a vertex $v_1$ of a graph $G$, 
the {\em diamond splitting operation} 
at $v_1$ (with respect to a fixed partition $\{U_0, U^*, U_1\}$ of $N_G(v_1)$ with $|U^*|=2$) removes the edges between $v_1$ and the vertices in $U_0$, adds a new vertex $v_0$, and adds new edges $v_0u$ for all $u\in U_0\cup U^*$. 
We say that a matroid $\cM$ on $K_n$ has the {\em diamond splitting property} if any diamond splitting operation preserves independence in $\cM$.
It was shown in~\cite{KNN} that $\cH_n^2$ has both the 0-extension property and the diamond splitting property.
 
We can now prove our main result on $\{K_4, K_{3,3}\}$-matroids.

\begin{theorem}\label{thm:K4K33}
Let $\cX=\{K_4, K_{3,3}\}_{K_n}$. 
Then the following statements are equivalent.
\begin{enumerate}
\item[(a)] There is a unique maximal $\cX$-matroid on $K_n$.
\item[(b)] The free elevation of $\cU_{\cX}$ has the 0-extension property and the diamond splitting property.
\item[(c)] There is an $\cX$-matroid  on $K_n$ that has the 0-extension property, the diamond splitting property, and the $\cX$-covering property.
\item[(d)] ${\rm val}_{\cX}$ is submodular on $E(K_n)$.
\end{enumerate}
\end{theorem}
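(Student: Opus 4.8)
The plan is to prove the cycle of implications $(a)\Rightarrow(b)\Rightarrow(c)\Rightarrow(d)\Rightarrow(a)$, which is the natural way to link the structural properties (0-extension, diamond splitting, $\cX$-covering) to the submodularity of $\val_{\cX}$ and the unique-maximality question. The implication $(d)\Rightarrow(a)$ is immediate from Lemma~\ref{lem:upper2}: if $\val_{\cX}$ is submodular, then it is the rank function of an $\cX$-cyclic matroid $\cM_{\cX}$ which is the unique maximal $\cX$-matroid on $K_n$ (the poset is nonempty since, e.g., $\cM_{g_{1,1,-1}}(K_n)$ is an $\cX$-matroid). For $(a)\Rightarrow(b)$, I would argue that if there is a unique maximal $\cX$-matroid $\cM^*$, then by Lemma~\ref{lem:free_elevation1} the free elevation $\cM$ of $\cU_{\cX}$ is a maximal $\cX$-matroid, so $\cM=\cM^*$; it then suffices to show $\cM^*$ has the two extension properties. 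Since $\cH_n^2$ is a $\{K_4,K_{3,3}\}$-matroid (by Kalai's result, using the $d=2$ case cited before the theorem) that has both the 0-extension and diamond splitting properties by~\cite{KNN}, and since $\cH_n^2\preceq\cM^*$ by maximality, I would show that these two properties are inherited upward in the weak order — a 0-extension or diamond splitting that preserves independence in $\cH_n^2$ must preserve it in any weakly larger matroid, because independence in the smaller matroid forces independence in the larger one.

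The step $(b)\Rightarrow(c)$ should be short: taking $\cM$ to be the free elevation of $\cU_{\cX}$, property (b) gives the 0-extension and diamond splitting properties directly, and the $\cX$-covering property follows from Lemma~\ref{lem:cover} (the free elevation of $\cU_{\cX}$, whose non-spanning circuits are exactly the members of $\cX$ once we check $E(K_n)=\bigcup_{X\in\cX}X$, has the $\cX$-covering property). So the real content is concentrated in $(c)\Rightarrow(d)$.

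For $(c)\Rightarrow(d)$, let $\cM$ be an $\cX$-matroid with all three properties. The goal is to show $r_{\cM}=\val_{\cX}$, for then $r_{\cM}$ is submodular as the rank function of a matroid, giving (d) via $r_{\cM}\le\val_{\cX}\le r_{\cM}$. Since $r_{\cM}\le\val_{\cX}$ always holds by Lemma~\ref{lem:upper1}, the task is to produce, for each connected flat $F$ of $\cM$, a proper $\cX$-sequence $\cS$ with $r_{\cM}(F)=\val(F,\cS)$, and then invoke Lemma~\ref{lem:conflat}. Here Lemma~\ref{lem:con} applies — since $\cM$ is a $K_4$-matroid with the 0-extension property, every circuit induces a 2-connected subgraph of $K_n$, so Lemma~\ref{lem:deg} becomes available and yields a vertex of bounded degree in any connected set. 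The plan is to use the $\cX$-covering property to write each cyclic (hence connected) flat as a union of copies of $K_4$ and $K_{3,3}$, and then to construct the required weakly saturated $\cX$-sequence by reverse induction: peel off a low-degree vertex guaranteed by Lemma~\ref{lem:deg}, reconstructing its incident edges one at a time using $K_4$'s and $K_{3,3}$'s, where the diamond splitting property is exactly what certifies that a degree-two or degree-related reduction step stays independent and can be reversed by adding a single edge inside a copy of $K_4$.

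The hardest part will be this construction of the weakly saturated $\cX$-sequence for an arbitrary connected flat $F$ in $(c)\Rightarrow(d)$: unlike the clean count-matroid cases of Section~\ref{sec:induced}, the flats of $\cM$ are not described by a simple submodular count function, so I cannot read off their rank or reconstruct them by a uniform inductive pattern. The interplay of the three hypotheses is delicate — the 0-extension property (through Lemma~\ref{lem:con}) gives 2-connectivity and hence the degree bound of Lemma~\ref{lem:deg}, the $\cX$-covering property controls the global structure of each flat, and the diamond splitting property supplies the single-edge reduction step that keeps the sequence weakly saturated. Making these three combine into a valid inductive construction, while verifying at each step that the chosen copy of $K_4$ or $K_{3,3}$ genuinely lies in the flat and adds exactly one new edge, is where the proof will require the most care.
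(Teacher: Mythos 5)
Your overall architecture matches the paper's proof exactly: the cycle $(a)\Rightarrow(b)\Rightarrow(c)\Rightarrow(d)\Rightarrow(a)$, with $(d)\Rightarrow(a)$ from Lemma~\ref{lem:upper2}, $(a)\Rightarrow(b)$ via Lemma~\ref{lem:free_elevation0} and upward inheritance of independence from $\cH_n^2$, $(b)\Rightarrow(c)$ from Lemma~\ref{lem:cover}, and $(c)\Rightarrow(d)$ via Lemma~\ref{lem:conflat} applied to connected flats, using Lemmas~\ref{lem:con} and \ref{lem:deg}. Those three easy implications are fine as you state them.

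However, there is a genuine gap in $(c)\Rightarrow(d)$, and it is precisely the part you flag as ``the hardest part'': you have the right ingredients but not the mechanism that makes them combine. The paper's induction on $r_\cM(F)$ runs as follows. Lemma~\ref{lem:deg} gives a base $B$ of $F$ and a vertex $v$ with $d_B(v)\leq 2$; one must then prove that $F_v$ (the edges of $F$ not incident to $v$) is itself a flat with $r_\cM(F_v)=r_\cM(F)-2$, using the 0-extension property to show every circuit has minimum degree at least $3$. This step is absent from your plan, and without it the induction has nowhere to land. The second missing idea is the actual role of the diamond splitting property: it is not used to ``certify that a reduction step stays independent,'' but rather in a key claim asserting that if $x,y\in N_F(v)$ and $xz,yz\in F_v$, then $uz\in F_v$ for \emph{every} $u\in N_F(v)\sm\{z\}$ (otherwise $B_v+uz$ would be independent since $F_v$ is a flat, and a diamond split at $z$ would produce an independent set in $F$ larger than $B$). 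This claim, combined with the $\cX$-covering property and a case split on whether some edge at $v$ lies in a copy of $K_4$ inside $F$, forces either $K(v,u_1,\dots,u_k)\subseteq F$ or $K(N_F(v);\{w_1,w_2\})\subseteq F$, which is what guarantees that the copies of $K_4$ (resp.\ $K_{3,3}$) appended to the inductively obtained sequence $\cS'$ each contribute exactly one new element, yielding ${\rm val}(F,\cS)={\rm val}(F_v,\cS')+2=r_\cM(F)$. Without the flat/rank-drop statement and the common-neighbour claim, the ``peel off a low-degree vertex and reconstruct'' step cannot be verified, so the proposal as written does not yet establish $(c)\Rightarrow(d)$.
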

\begin{proof}
(d) $\Rightarrow$ (a): This follows from Lemma~\ref{lem:upper1}.
\\[1mm]
(a) $\Rightarrow$ (b): Since $\cX$ is 6-uniform and union-stable, $\cU_{\cX}(K_n)$ is a maximal matroid in the poset of all $\cX$-matroids on $K_n$ of rank at most 6. Clearly $\cU_{\cX}(K_n)\neq \cU_5(K_n)$. Lemma~\ref{lem:free_elevation0} and (a) now imply that the free elevation of $\cU_{\cX}(K_n)$ is the unique maximal $\cX$-matroid on $K_n$.
Since $\cH_n^2$ is an $\cX$-matroid on $K_n$ with the 0-extension property and the diamond splitting property,  the free elevation of $\cU_{\cX}(K_n)$ also has the 0-extension property and the diamond splitting property.
\\[1mm]
(b) $\Rightarrow$ (c): This follows from Lemma~\ref{lem:cover}.
\\[1mm]
(c) $\Rightarrow$ (d): 
Suppose that (c) holds for some $\cX$-matroid $\cM$ on $K_n$.
We prove that $r_{\cM}={\rm val}_{\cX}$.
By Lemma~\ref{lem:conflat}, it suffices to show that, for each  connected flat $F$ of $\cM$,
there is a proper $\cX$-sequence $\cS$ such that $r_{\cM}(F)={\rm val}(F,\cS)$.
We prove this by induction on  the rank of $F$.

%
%
%

Since $\cM$ has the 0-extension property, Lemma~\ref{lem:con} 
implies that every circuit in $\cM$ induces a 2-connected subgraph of $K_n$.
Since $\cM$ is a $K_4$-matroid, $r_\cM(F)\leq 2|V(F)|-3$ and Lemma~\ref{lem:deg} now implies that there exists a base $B$ of $F$ and a vertex $v\in V(B)$ such that 
$d_B(v)\leq 2$. Let $F_v$ and $B_v$ be the set of edges in $F$ and $B$, respectively, which are not incident to $v$. We first show that
\begin{equation}\label{eq:flat}
\mbox{$F_v$ is a flat in $\cM$, $d_B(v)=2$ and $r_\cM(F_v)=r_\cM(F)-2$.}
\end{equation}
To verify (\ref{eq:flat}) we first note that, since $\cM$ has the 0-extension property, every circuit in $\cM$ has minimum degree at least three. Since $d_B(v)\leq 2$, this implies that $\cl_\cM(B_v)=F_v$ and hence $F_v$ is a flat in $\cM$. 
In addition, since $F$ is connected in $\cM$, we have $d_F(v)\geq 3$. The facts that $d_B(v)\leq 2$ and $\cM$ has the 0-extension property, now give 
$|B|=r_\cM(F)\geq r_\cM(F_v)+2=|B_v|+2\geq |B|$.  Hence equality holds throughout and 
(\ref{eq:flat}) holds.

\begin{claim}\label{clm:key}
Let $x, y\in N_F(v)$ and $z\in V(F_v)$ be three distinct vertices.
Suppose that $xz,yz\in F_v$. Then $uz\in F_v$  for all $u\in N_F(v)\setminus \{z\}$.
\end{claim}
\begin{proof}
Suppose, for a contradiction, that $uz\not\in F_v$ for some $u\in N_F(v)\setminus \{z\}$.
Since $F_v$ is a flat, $B_v+uz$ is independent.
We may construct  $B'=B_v\cup \{vx,vy,vu\}$ from $B_v+uv$ by applying the diamond splitting operation to  $z$ in such a way that 
the new vertex $v$ has degree three and is adjacent to $x, y, u$.
Then $B'$ is contained in $F$ and is independent in $\cM$.
Since $|B'|>|B|$, this contradicts the fact that $B$ is a base of $F$.
\end{proof}

We may apply induction to each connected component of $F_v$ in $\cM$ to obtain a proper $\cX$-sequence $\cS'=(X_1,X_2\dots, X_t)$ such that $r_{\cM}(F_v)={\rm val}(F_v,\cS')$.
Since $\cM$ has the  $\cX$-covering property, $F$ is the union of copies of $K_4$ and $K_{3,3}$.
Let $N_F(v)=\{u_1, u_2,\dots, u_k\}$.

Suppose that some edge of $F$ which is incident to $v$ is contained in a copy $K_4$ in $F$. Relabelling if necessary, we may suppose that the complete graph $K(v,u_1,u_2,u_3)$ satisfies $K(v,u_1,u_2,u_3)\subseteq F$. We will show that  $K(v,u_1,u_2,\ldots,u_t)\subseteq F$. For each $i=4,5,\ldots,t$, we may apply Claim \ref{clm:key} with $x=u_1,y=u_2,z=u_3,u=u_i$ to deduce that $u_iu_1,u_iu_2\in F$. Since $\cM$ has the  0-extension property, this implies that 
$F$ contains an independent set of size $2|N_F(v)|-3$ on $N_F(v)$. Since $F$ is a flat and every $A\subseteq E(K_n)$ has rank at most $2|V(A)|-3$,
this implies that $K(v,u_1,u_2,\ldots,u_t)\subseteq F$.
Let $X_{t+i}$ be a copy of $K_4$ on $\{v, u_i, u_{i+1}, u_{i+2}\}$ for $i=1,\dots, k-2$,
and let $\cS=(X_1,\dots, X_t, X_{t+1},\dots, X_{t+i+2})$ be obtained by appending $(X_{t+1},\dots, X_{t+k-2})$ to $\cS'$.
Then we have 
${\rm val}(F,\cS)={\rm val}(F_v, \cS')+2=r_{\cM}(F_v)+2= r_{\cM}(F)$, 
as required.

It remains to consider the case when no edge of $F$ incident to $v$ is contained in a copy  of $K_4$ in $F$.
Then every edge in $F$ which is incident to $v$ is contained in  a copy of $K_{3,3}$.
Relabelling if necessary we may suppose that the complete bipartite graph $K(v,w_1,w_2;u_1,u_2,u_3)$ is contained in  $F$.
Then $w_i\notin N_F(v)$ for $i=1,2$, since otherwise the facts that 
$F$ is a flat and $\cM$ is a $K_4$-matroid would imply that $K(v, u_{1}, u_{2}, w_i)\subseteq F$. 
By Claim~\ref{clm:key}, $F$ contains $u_iw_1$ and $u_iw_2$ for all $u_i\in N_F(v)$.
Hence, $F$ contains the complete bipartite graph $K(N_F(v);\{w_1, w_2\})$.
Let $X_{t+i}=K(v, w_1, w_2;u_i, u_{i+1}, u_{i+2})$ for $i=1,\dots, k-2$,
and let $\cS=(X_1,\dots, X_t, X_{t+1},\dots, X_{t+i+2})$ be obtained by appending $(X_{t+1},\dots, X_{t+k-2})$ to $\cS'$.
Then we have ${\rm val}(F,\cS)={\rm val}(F_v,\cS')+2=r_{\cM}(F_v)+2= r_{\cM}(F)$, 
as required.

This completes the proof of the theorem.
\end{proof}

We can prove Theorem~\ref{thm:K33}
by restricting the above argument to complete bipartite graphs.
We need the following counterpart to Lemma~\ref{lem:con}.
\begin{lemma}\label{lem:con2}
Let $\cM$ be a $K_{3,3}$-matroid on $K_{m,n}$ with the 0-extension property.
Then, every circuit in $\cM$ induces a  2-connected subgraph of $K_{m,n}$.
\end{lemma}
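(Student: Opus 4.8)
The plan is to mirror the proof of Lemma~\ref{lem:con} almost verbatim, transferring the argument from the complete graph $K_n$ to the complete bipartite graph $K_{m,n}$. I would argue by contradiction: suppose some circuit $C$ in $\cM$ fails to induce a $2$-connected subgraph of $K_{m,n}$. First I would handle the case when $C$ is connected but has a cut vertex, so that $C$ can be partitioned into edge sets $X$ and $Y$ with $|V(X)\cap V(Y)|=1$; the non-connected case then follows by the same bookkeeping applied to the connected components, exactly as in Lemma~\ref{lem:con}.

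For the main case, let $a=|U(Y)|$ and $b=|W(Y)|$ be the sizes of the two sides of the bipartition of $Y$, and let $K$ be the edge set of the complete bipartite graph on the vertex set $V(Y)$. The key input is an upper bound on $r_\cM(K)$: since $\cM$ is a $K_{3,3}$-matroid on $K_{m,n}$, I would invoke the bipartite analogue of the count bound, namely that the rank of a complete bipartite subgraph $K_{a,b}$ is controlled by the function $f_{1,0}$ of Theorem~\ref{thm:K23Kmn}, giving $r_\cM(K)\le a+b-\epsilon$ for the appropriate constant $\epsilon$ (one should check the exact count that $\cM_{f_{1,0}}(K_{m,n})$ assigns, since that matroid is the unique maximal $K_{2,3}$-matroid and every $K_{2,3}$-matroid, and in particular every $K_{3,3}$-matroid, is dominated by it). Because $C=X\cup Y$ is a circuit, submodularity of the rank gives $r_\cM(X\cup K)\le r_\cM(X)+r_\cM(K)-1$.

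Against this upper bound I would build a large independent subset of $X\cup K$ by repeatedly applying the $0$-extension property, starting from the independent set $X$ and adding the edges of $K$ one new vertex at a time, each new vertex contributing exactly two edges. Counting how many edges this procedure adds yields an independent set strictly larger than the bound on $r_\cM(X\cup K)$ permits, which is the desired contradiction. The only subtlety, and the step I expect to be the main obstacle, is getting the bipartite arithmetic exactly right: one must verify that the count bound for $K_{a,b}$ and the number of edges produced by the $0$-extension sequence interact with the same off-by-one slack that makes the argument work in Lemma~\ref{lem:con}, and one must check that the $0$-extensions can indeed be realized inside $K_{m,n}$ while respecting the bipartition (each new vertex and its two neighbours must lie on the correct sides). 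Once the constants line up and the realizability of the extensions is confirmed, the contradiction closes the connected case, and the disconnected case is then immediate by the same reasoning as in Lemma~\ref{lem:con}.
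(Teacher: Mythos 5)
Your overall strategy---transplant the proof of Lemma~\ref{lem:con} to the bipartite setting by splitting a non-$2$-connected circuit into $X$ and $Y$ with $|V(X)\cap V(Y)|=1$, bounding the rank of the complete bipartite graph $K$ on $V(Y)$ from above, and beating that bound with an independent set built by $0$-extensions---is exactly the paper's. But the key quantitative input you propose is wrong. You want to bound $r_\cM(K)$ by appealing to Theorem~\ref{thm:K23Kmn}, on the grounds that ``every $K_{2,3}$-matroid, and in particular every $K_{3,3}$-matroid, is dominated by $\cM_{f_{1,0}}(K_{m,n})$.'' A $K_{3,3}$-matroid is \emph{not} in particular a $K_{2,3}$-matroid: if every copy of $K_{2,3}$ were a circuit, no copy of $K_{3,3}$ could be one, since a circuit cannot properly contain another circuit. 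And the bound $r_\cM(K_{a,b})\le a+b$ genuinely fails for $K_{3,3}$-matroids (e.g.\ $\cR^{2,2}_{m,n}$ has rank $2m+2n-4$). The bound that actually works, and that the paper uses, is $r_\cM(E(K))\le 2|V(Y)|-4$, which comes from Lemma~\ref{lem:upper1} applied to a weakly saturated $K_{3,3}$-sequence constructing $K(U(Y);W(Y))$ from a subgraph with $2(|U(Y)|+|W(Y)|)-4$ edges---the bipartite analogue of the $2|V|-3$ bound for $K_4$-matroids used in Lemma~\ref{lem:con}. With that constant the arithmetic closes: $r_\cM(X\cup E(K))\le |X|+2|V(Y)|-5$, while the $0$-extension construction produces an independent set of size $|X|+2(|V(Y)|-2)=|X|+2|V(Y)|-4$.

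You are also missing a degenerate case that your own construction cannot handle: if $|U(Y)|=1$ or $|W(Y)|=1$, then $K$ is a star, each new vertex can only contribute one edge rather than two, and the count collapses. The paper disposes of this case separately and more directly: $Y$ is then a tree, so $X\cup Y$ is already independent by the $0$-extension property, contradicting the assumption that $C=X\cup Y$ is a circuit. You should add this case split before choosing the second vertex $v\in U(Y)\setminus\{u\}$ needed to seed the $0$-extension sequence.
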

\begin{proof}
Suppose, for a contradiction, that some circuit $C$ in $\cM$ does not induce a 2-connected subgraph of $K_{m,n}$.

We first consider the case when $K_{m,n}[C]$ is connected.
Then $C$ can be partitioned into two sets $X$ and $Y$ such that 
$|V(X)\cap V(Y)|=1$.

Suppose that $|U(Y)|=1$ or $|W(Y)|=1$.
Then $Y$ is a tree.
Since $X$ is independent, the 0-extension property implies that $X\cup Y$ is independent, which is a contradiction. Hence, $|U(Y)|\geq 2$ and  $|W(Y)|\geq 2$.

Let $K=K(U(Y);W(Y))$.
Since $\cM$ is a $K_{3,3}$-matroid on $K_{m,n}$, $r_\cM(E(K))\leq 2|V(Y)|-4$.
The fact that $X\cup Y$ is a circuit now gives
$r_\cM(X\cup E(K))\leq r_\cM(X)+r_\cM(E(K))-1\leq |X|+2|V(Y)|-5$.

We may construct an independent subset of $X\cup E(K)$ by extending the independent set $X$ using 0-extensions.
Without loss of generality, we can assume $X$ and $Y$ share a vertex $u$ in $U$.
By $|U(Y)|\geq 2$, we can choose $v\in U(Y)\setminus \{u\}$.
Let $G$ be the graph obtained from $K_{m,n}[X]$ by appending $v$ as an isolated vertex.
Then, repeatedly applying the 0-extension operation, we can extend $G$ to a graph $G'$ 
of size $|X|+2(|V(Y)|-2)=|X|+2|V(Y)|-4$.
This contradicts the fact that the rank of $X\cup E(K)$ is at most $|X|+2|V(Y)|-5$.

The case when $K_{m,n}[C]$ is disconnected is similar.
\end{proof}

\begin{proof}[Proof of Theorem~\ref{thm:K33}]
The proof proceeds in the same way as that of Theorem~\ref{thm:K4K33} by using Lemma~\ref{lem:con2} instead of Lemma~\ref{lem:con}.
\end{proof}

\paragraph{Acknowledgements} We would like to thank Gil Kalai for bringing our attention to weakly saturated sequences.

This work was supported by 
JST ERATO Grant Number JPMJER1903,  
JSPS KAKENHI Grant Number 18K11155, and EPSRC overseas travel grant EP/T030461/1.

\end{document}